\providecommand{\noopsort[1]{}}
\numberwithin{equation}{section}
\DeclareMathOperator{\fix}{\mathrm{fix}}
\setlist{leftmargin=*}
\setlist[1]{labelindent=1.2\parindent}
\newtheorem{thm}{Theorem}[section]
\newtheorem{cor}[thm]{Corollary}
\newtheorem{prop}[thm]{Proposition}
\newtheorem{lem}[thm]{Lemma}
\theoremstyle{remark}
\newtheorem{rem}[thm]{Remark}
\newtheorem{hyp}[thm]{Hypothesis}
\newtheorem{example}[thm]{Example}
\theoremstyle{definition}
\newtheorem{defn}[thm]{Definition}
\newcommand{\one}{\mathds{1}}
\newcommand{\R}{\mathds{R}}
\newcommand{\C}{\mathds{C}}
\newcommand{\N}{\mathds{N}}
\newcommand{\cL}{\mathscr{L}}
\newcommand{\cA}{\mathscr{A}}
\newcommand{\loc}{\mathrm{loc}}
\newcommand{\applied}[2]{\langle #1,#2\rangle}
\DeclarePairedDelimiter\norm{\lVert}{\rVert}
\DeclarePairedDelimiter\abs{\lvert}{\rvert}
\renewcommand{\Re}{{\mathrm{Re}}\,}
\renewcommand{\Im}{{\mathrm{Im}}\,}
\DeclareMathOperator{\rg}{\mathrm{rg}}
\newcommand{\eps}{\varepsilon}
\renewcommand{\phi}{\varphi}
\DeclareMathOperator{\lh}{\mathrm{span}}
\newcommand{\Dmax}{D_\mathrm{max}}
\newcommand{\weak}{\rightharpoonup}
\begin{document}
\title[Diffusion with nonlocal Dirichlet boundary conditions]{Diffusion with nonlocal Dirichlet boundary conditions on  unbounded domains}

\author{Markus C.\ Kunze}
\address{Markus C.\ Kunze\\Universit\"at Konstanz\\Fachbereich Mathematik und Statistik\\ Fach 193\\78457 Konstanz\\Germany}
\email{markus.kunze@uni-konstanz.de}

\begin{abstract}
We consider a second order differential operator $\cA$ on an open and Dirichlet regular set $\Omega\subset \R^d$ (which typically is unbounded) and subject to nonlocal Dirichlet boundary conditions of the form
\[
u(z) = \int_\Omega u(x)\mu (z, dx) \quad \mbox{ for } z\in \partial \Omega.
\]
Here, $\mu : \partial\Omega \to \mathscr{M}(\Omega)$ takes values in the probability measures on $\Omega$ and is continuous in the weak topoly $\sigma (\mathscr{M}(\Omega), C_b(\Omega))$. Under suitable assumptions on the coefficients in $\cA$, which may be unbounded, we prove that a realization $A_\mu$ of $\cA$ subject to the nonlocal boundary condition, generates a (not strongly continuous) semigroup on $L^\infty(\Omega)$. We establish a sufficient condition for this semigroup to be Markovian and prove that in this case, it enjoys the strong Feller property. We also study the asymptotic behavior of the semigroup.
\end{abstract}
\subjclass[2010]{47D07, 60J35, 35B40}
\keywords{
Diffusion process, nonlocal boundary conditons, strong Feller property, asymptotic behavior.}

\maketitle

\section{Introduction}

There is a well-known connection between Markov process on the one hand and parabolic partial differential equations and Markovian semigroups on the other hand. Starting with the seminal work of Feller \cite{feller-semigroup, feller-diffusion}, who studied the one-dimensional situation, this connection has developed into a rich and active field of scientific research. 
In this article, we seek to combine two aspects of this field which, over time, have received much attention: \emph{nonlocal boundary conditions} and \emph{unbounded coefficients}. 

We shall consider second order differential operators $\mathscr{A}$ on an  open subset $\Omega$ 
of $\R^d$, formally given by
\begin{equation}\label{eq.A}
\mathscr{A}u \coloneqq \sum_{i,j=1}^d a_{ij}D_iD_j u + \sum_{j=1}^db_j D_j u.
\end{equation}
In the typical applications we have in mind, the set $\Omega$ is unbounded and the coefficients $a_{ij}$ and $b_j$ are functions on $\Omega$ which may be unbounded as $|x|\to \infty$ within $\Omega$.
We will study a realization $A_\mu$ of $\mathscr{A}$ subject to nonlocal Dirichlet boundary conditions of the form
\begin{equation}
\label{eq.bc}
u(z) = \int_{\Omega} u(x) \mu (z, dx)
\end{equation}
for all $z\in \partial \Omega$.
In this equation, for every $z\in \Omega$ we are given a probability measure $\mu (z, \cdot)$ on $\Omega$. Nonlocal boundary conditions of this form arise naturally in applications, e.g.\ in financial mathematics (see \cite{gk02}), for the entropy in models of thermoelasticity (see \cite{day}), for heat conduction in ``well-stirred liquids'' (see \cite{berg}) or in the study of functional differential equations (see \cite{sku97}). 

This boundary condition has a clear probabilistic interpretation. Whenever a diffusing particle reaches the boundary of $\Omega$ in the point $z$, it immediately jumps back to the interior of $\Omega$. The point to which it jumps is chosen randomly according to the probability measure $\mu (z, \cdot)$. Thus, this boundary condition models what Feller in \cite{feller-diffusion} called an \emph{instantaneous return process}.\smallskip

On bounded domains, nonlocal boundary conditions of this form were considered by several authors, 
using different approaches  
\cite{akk16, b-ap07, b-ap09, galaskub, sk89, sk95,  taira16}. We should point out that this boundary condition falls in the so-called `non-transversal case' where the nonlocal term is of highest order in the boundary condition, since only terms of order 0 appear in the boundary condition. As a consequence, we cannot expect to obtain a strongly continuous semigroup on the space $C_b(\overline{\Omega})$ of bounded and continuous functions on $\overline{\Omega}$. Thus, to use strongly continuous semigroups, one has to either work on the $L^p$-scale (as was done in \cite{b-ap07, b-ap09}) or one has to consider a closed subspace (that heavily depends on the measure $\mu$) of the space of bounded and continuous functions (as was the case in \cite{galaskub, sk89, sk95}). The drawback of both approaches is that it is not clear how to extract transition probabilities from these semigroups. In \cite{akk16} we proved generation of an analytic semigroup on the space 
$L^\infty(\Omega)$. This semigroup is not strongly continuous but it enjoys the strong Feller property (so that in particular the semigroup is given through transition probabilities). We would like  to point out that in the case of nonlocal Robin boundary conditions (which, due to the presence of the normal derivative in the boundary condition which is of order 1, 
falls in the `transversal case') we do obtain strong continuity and analyticity of the semigroup on the space of bounded and continuous functions, see \cite{akk18}.\smallskip 

In contrast to the situation on \emph{bounded domains} we cannot expect analyticity of the semigroup for differential operators with unbounded coefficients on unbounded domains. This can be already seen in the prototype example of the Ornstein--Uhlenbeck semigroup, see \cite{dpl95}. Thus, in this article, one of the main obstacles to overcome is the choice of an appropriate semigroup setting, in which we can handle semigroups that are neither strongly continuous nor analytic. To that end, we will introduce the concept of a \emph{$*$-semigroup}, see
Section \ref{s.semigroup}. Even though these semigroups consist of adjoint operators they are a priori not adjoint semigroups in the sense of \cite{vanneerven1992}. This is due to the fact that the orbits need not be weak$^*$-continuous in $0$, so that  such a semigroup need not be the adjoint of a strongly continuous semigroup.
While the semigroups we will consider have no continuity at $0$, the regularity of the orbits for $t>0$ is quite good as a consequence of the strong Feller property, see Theorem \ref{t.sfcont}.
\smallskip

Our basic strategy to tackle the problem on unbounded domains is the same as in \cite{mpw02}, namely, we approximate the elliptic problem on unbounded domains by problems on bounded domains. This has been done in \cite{mpw02} for operators on all of $\R^d$. In the case of unbounded domains also Dirichlet (\cite{fmp04}) and Neumann (\cite{bf04, bfl07}) boundary conditions were considered. We should point out that in the cited articles the parabolic problem for $\mathscr{A}$ was treated independently of the elliptic problem, using heavily  Schauder theory for parabolic equations on bounded domains. 
However, in the Schauder approach to such problems higher regularity of the boundary and the coefficients is needed. Even worse for us, in Schauder boundary estimates also H\"older regularity of the boundary data is needed. In our situation, these boundary data are given via Equation \eqref{eq.bc}. If $u$ is continuous in the interior of $\Omega$, then the boundary data are also continuous. However, H\"older continuity cannot be expected. 

Thus, in this article we use a different approach which is abstract and, in spirit, is closer to semigroup theory in that we obtain all information about the parabolic equation by studying corresponding elliptic problem.
 Our main tool is a monotone convergence theorem for $*$-semigroups (Proposition \ref{prop:supremumsemigroup}).\smallskip 

Let us now specify our assumptions and state our main results. We refer to Section \ref{s.bdd} for unexplained terminology.

\begin{hyp}\label{h.coeff}
Throughout, let $\varnothing\neq \Omega \subset \R^d$ be an open and Dirichlet regular set. Concerning the coefficients in Equation \eqref{eq.A}, we assume that $a_{ij} \in C(\overline{\Omega})$, $b_j \in L^\infty_{\mathrm{loc}}(\overline{\Omega})$ are real valued for $i,j=1, \ldots, d$. The diffusion coefficients $a_{ij}$ are assumed to be symmetric (i.e. $a_{ij}=a_{ji}$ for $i,j=1, \ldots, d$) and strictly elliptic in the sense that there is a function $\eta \in C(\overline{\Omega})$ with $\eta(x) >0$ for every $x\in \overline{\Omega}$, such that for all $\xi \in \R^d$ we have
\[
\sum_{i,j=1}^d a_{ij}(x)\xi_i\xi_j \geq \eta (x)|\xi|^2
\]
for every $x\in \overline{\Omega}$. In addition, we assume that either
\begin{enumerate}
[(i)]
\item the coefficients $a_{ij}$ are locally Dini continuous for $i,j=1, \ldots, d$ or
\item for every $n\in \N$ the set $\Omega \cap B_n(0)$ satisfies the uniform exterior cone condition. Here, $B_n(0)$ denotes the Euclidean ball of radius $n$, centered at $0$.
\end{enumerate} 
\end{hyp}

In the above, $L^\infty_{\mathrm{loc}}(\overline{\Omega})$ refers to the space of all functions that are essentially bounded on compact subsets of $\overline{\Omega}$. Thus the drift coefficients $b_j$ and the diffusion coefficients $a_{ij}$ may be unbounded as $|x|\to \infty$, but they may \emph{not} explode near the boundary $\partial \Omega$. Likewise, the ellipticity constant $\eta$ may degenerate to $0$ as $|x|\to \infty$, but not near the boundary. 

Next, we make our assumptions concerning the boundary condition precise. We denote the Borel $\sigma$-algebra on $\Omega$ by $\mathscr{B}(\Omega)$ and the space of (signed) Borel measures on $\Omega$ by $\mathscr{M}(\Omega)$. 

\begin{hyp}\label{h.mu}
We let $\mu : \partial\Omega\times \mathscr{B}(\Omega) \to \mathscr{M}(\Omega)$. We will sometimes write
$\mu(z) \coloneqq \mu (z, \cdot) \in \mathscr{M}(\Omega)$. We assume that
\begin{enumerate}
[(i)]
\item $\mu(z)$ is a probability measure for every $z\in \partial\Omega$ and 
\item the map $z\mapsto \mu(z)$ is $\sigma (\mathscr{M}(\Omega), C_b(\Omega))$-continuous.
\end{enumerate}
\end{hyp}
Here, $\sigma (\mathscr{M}(\Omega), C_b(\Omega))$ refers to the weak topology on $\mathscr{M}(\Omega)$ induced
by the bounded and continuous functions. Thus, condition (ii) is equivalent to asking that the map
\[
z \mapsto \int_\Omega f\, d\mu (z)
\]
is continuous for every function $f\in C_b(\Omega)$.

As in \cite{akk16}, given an open set $U\subset \R^d$, we set
\[
W(U)\coloneqq \bigcap_{1<p<\infty} W^{2,p}_{\mathrm{loc}}(U).
\]
By elliptic regularity, see \cite[Lemma 9.16]{gt}, we have $u\in W(\Omega)$ whenever $u\in W^{2,p}_{\mathrm{loc}}(\Omega)$ for some
$1<p<\infty$ and $\mathscr{A}u \in L^\infty_{\mathrm{loc}}(\Omega)$. We now complement our differential operator $\mathscr{A}$ with nonlocal boundary conditions of the form \eqref{eq.bc}. To that end,
we define the \emph{maximal domain} $\Dmax$ by
\begin{align*}
D_{\mathrm{max}}  \coloneqq \Big\{ u&  \in C_b(\overline{\Omega})\cap W(\Omega) : \cA u \in L^\infty(\Omega)\\
&\qquad u(z) = \int_{\Omega} u(x) \mu (z, dx)\,\, \forall\, z \in \partial\Omega\Big\}.
\end{align*}

Our main result is as follows.

\begin{thm}\label{t.m1}
Assume Hypotheses \ref{h.coeff} and \ref{h.mu}. Then there is a subspace $D(A_\mu)$ of $D_{\mathrm{max}}$, such that
the operator $A_\mu : D(A_\mu) \to L^\infty(\Omega)$, $A_\mu u = \mathscr{A} u$ has the following properties:
\begin{enumerate}
[(a)]
\item $(0,\infty) \subset \rho (A_\mu)$ and $R(\lambda, A_\mu)$ is a positive operator on $L^\infty(\Omega)$ which satisfies $\|\lambda R(\lambda, A_\mu)\|\leq 1$ for all $\lambda >0$;
\item For every $\lambda >0$ and $f\in L^\infty(\Omega)_+$ the function $u \coloneqq R(\lambda, A_\mu)f$ is the smallest positive solution of the equation $\lambda u -\mathscr{A}u =f$ in $D_\mathrm{max}$.
\item $A_\mu$ generates a positive and contractive $*$-semigroup $T_\mu = (T_\mu (t))_{t\geq 0}$ on $L^\infty(\Omega)$.
\item $D(A_\mu) = D_{\mathrm{max}}$ if and only if $\one \in D(A_\mu)$. In this case the semigroup $T_\mu$ enjoys the strong Feller property.
\item If $\ker A_\mu = \lh\{\mathds{1}\}$ then there is at most one invariant probability measure for the semigroup $T_\mu$. If there is an invariant probability measure $\nu^\star$, then for every $f\in L^\infty(\Omega)$ we have
\[
\lim_{t\to\infty} T_\mu(t)f = \int_{\overline{\Omega}} f\, d\nu^\star \cdot\mathds{1}
\]
uniformly on compact subsets of $\overline{\Omega}$ whereas for the adjoint semigroup $T_\mu'$ on the space $\mathscr{M}(\overline{\Omega})$ we have for every $\nu \in \mathscr{M}(\overline{\Omega})$
\[
\lim_{t\to\infty} T_\mu'(t)\nu = \nu(\overline{\Omega})\nu^\star
\]
in total variation norm.
\end{enumerate}
\end{thm}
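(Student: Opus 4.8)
The plan is to realize $A_\mu$ as a monotone limit of the analogous operators on the bounded sets $\Omega_n:=\Omega\cap B_{r_n}(0)$, where $r_n\uparrow\infty$ is chosen so that each $\Omega_n$ is Dirichlet regular (possible under Hypothesis~\ref{h.coeff}), and then to transfer all information from the elliptic to the parabolic problem through Proposition~\ref{prop:supremumsemigroup}. First I would fix cut-offs $\vartheta_n\in C_c(\R^d)$ with $0\le\vartheta_n\le1$, $\vartheta_n\equiv1$ on $B_{r_{n-1}}(0)$ and $\supp\vartheta_n\subset B_{r_n}(0)$, and define $\mu_n(z)\in\mathscr{M}(\Omega_n)$ by $d\mu_n(z)=\vartheta_n\,d\mu(z)$; these are sub-probability measures supported away from the artificial boundary $\partial B_{r_n}(0)\cap\Omega$, and Hypothesis~\ref{h.mu} gives that $z\mapsto\mu_n(z)$ is $\sigma(\mathscr{M}(\Omega_n),C_b(\Omega_n))$-continuous (each $f\in C_b(\Omega_n)$ yields $f\vartheta_n\in C_b(\Omega)$ after extension by $0$). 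The constructions of \cite{akk16} then apply on each $\Omega_n$ --- verbatim for the sub-probability data $\mu_n$ on $\partial\Omega\cap\overline{B_{r_n}(0)}$ and the additional homogeneous Dirichlet part on $\partial B_{r_n}(0)\cap\Omega$ --- producing generators $A_n$ of positive, contractive analytic semigroups $T_n$ on $L^\infty(\Omega_n)$ whose resolvents $R(\lambda,A_n)$ are, for $f\ge0$, the smallest positive solutions of the associated elliptic problem.

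Identifying $L^\infty(\Omega_n)$ with the functions in $L^\infty(\Omega)$ vanishing off $\Omega_n$, the comparison principle on $\Omega_n$ shows that for $0\le f\in L^\infty(\Omega)$ the restriction to $\Omega_n$ of $R(\lambda,A_{n+1})f$ is a supersolution of the problem defining $R(\lambda,A_n)f$ (using $\vartheta_n\le\vartheta_{n+1}$ and positivity), so $R(\lambda,A_n)f\le R(\lambda,A_{n+1})f$. I would then set $R(\lambda)f:=\sup_n R(\lambda,A_n)f$ for $f\ge0$ and extend linearly: this is a positive pseudo-resolvent with $\|\lambda R(\lambda)\|\le1$. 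To place $u:=R(\lambda)f\in\Dmax$, note $u_n:=R(\lambda,A_n)f\uparrow u$, that interior $L^p$-estimates (licit since the coefficients are controlled on compacts of $\overline\Omega$) bound $u_n$ in $W^{2,p}_{\loc}(\Omega)$, whence along a subsequence $u\in W(\Omega)$ with $\lambda u-\mathscr{A}u=f$ and $\mathscr{A}u\in L^\infty$; continuity up to $\partial\Omega$ and the identity $u(z)=\int_\Omega u\,d\mu(z)$ follow from a barrier argument at each $z_0\in\partial\Omega$ (Dirichlet regularity supplies the barriers), using $u_n(z)=\int\vartheta_n u_n\,d\mu(z)\uparrow\int u\,d\mu(z)$ and the continuity of $z\mapsto\int u\,d\mu(z)$ from Hypothesis~\ref{h.mu}. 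In particular $R(\lambda)$ is injective (if $R(\lambda)f=0$ then $R(\lambda)f^+=R(\lambda)f^-$ and applying $\lambda-\mathscr{A}$ gives $f^+=f^-$), so $R(\lambda)=R(\lambda,A_\mu)$ for a unique operator $A_\mu$ with $D(A_\mu)=\rg R(\lambda,A_\mu)\subseteq\Dmax$ and $A_\mu u=\mathscr{A}u$ for $u\in D(A_\mu)$; this is (a), and (b) follows because any nonnegative $v\in\Dmax$ solving $\lambda v-\mathscr{A}v=f$ is a supersolution of every $\Omega_n$-problem (positivity gives $v(z)\ge\int\vartheta_n v\,d\mu(z)$ and $v\ge0$ on $\partial B_{r_n}(0)$), hence $u_n\le v$ for all $n$ and $u\le v$.

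For (c), the exponential formula transfers the monotonicity $R(\lambda,A_n)\le R(\lambda,A_{n+1})$ to $T_n(t)\le T_{n+1}(t)$ on positive arguments, so $T_\mu(t)f:=\sup_n T_n(t)f$ ($f\ge0$, extended linearly) is well defined and, by Proposition~\ref{prop:supremumsemigroup}, is a positive contractive $*$-semigroup whose generator is $A_\mu$. For (d), $\one\in\Dmax$ always (since $\mathscr{A}\one=0$ and $\mu(z)$ is a probability measure), so one implication is trivial; if $\one\in D(A_\mu)$ then $A_\mu\one=0$, hence $R(\lambda,A_\mu)\one=\lambda^{-1}\one$ and $T_\mu(t)\one=\one$, and for $v\in\Dmax$ the function $h:=v-R(\lambda,A_\mu)(\lambda v-\mathscr{A}v)\in\Dmax$ satisfies $\lambda h-\mathscr{A}h=0$; with $c:=\|h\|_{C_b(\overline\Omega)}$ the nonnegative functions $c\one\pm h\in\Dmax$ solve $\lambda(c\one\pm h)-\mathscr{A}(c\one\pm h)=\lambda c\one$, so minimality gives $c\one=\lambda c\,R(\lambda,A_\mu)\one\le c\one\pm h$, forcing $h=0$ and $v\in D(A_\mu)$. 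The strong Feller property in this case starts from $R(\lambda,A_\mu)\colon L^\infty(\Omega)\to C_b(\overline\Omega)$ (the barrier argument again) and from $T_\mu(t)f\in\rg R(\lambda,A_\mu)$ for $t>0$ (via $R(\lambda,A_\mu)T_\mu(t)f=\int_0^\infty e^{-\lambda s}T_\mu(s+t)f\,ds$); the upgrade to $T_\mu(t)f\in C_b(\overline\Omega)$ with control of the modulus of continuity combines the strong Feller property of the $T_n$ with interior and boundary regularity estimates near $\partial\Omega$ that are uniform in $n$ (the coefficients do not degenerate there), together with the fact that any fixed compact subset of $\overline\Omega$ eventually stays far from the artificial boundaries $\partial B_{r_n}(0)$.

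Finally, for (e): under $\ker A_\mu=\lh\{\one\}$ one has $\one\in D(A_\mu)$, so by (d) $T_\mu$ is Markovian and strong Feller. The decisive step is to show that for every $f\in L^\infty(\Omega)$ the orbit $\{T_\mu(t)f:t\ge1\}$ is relatively compact in $C(\overline\Omega)$ for uniform convergence on compact sets; granted this, a standard ergodic argument using that the $\omega$-limit set is a compact, connected, $T_\mu$-invariant set, combined with $\ker A_\mu=\lh\{\one\}$ and the strong Feller property (which yield irreducibility, so the only fixed vectors are constants), shows that every limit point of $\{T_\mu(t_k)f\}$ is some $c\one$; evaluating the invariance relation $\int T_\mu(t_k)f\,d\nu^\star=\int f\,d\nu^\star$ and controlling the tails with $\nu^\star(\overline\Omega)=1$ identifies $c=\int f\,d\nu^\star$, so the full limit exists and equals $(\int f\,d\nu^\star)\one$; uniqueness of $\nu^\star$ then follows by applying this to a second invariant probability measure. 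The convergence $T_\mu'(t)\nu\to\nu(\overline\Omega)\nu^\star$ in total variation is obtained on the adjoint side, using that the instantaneous-return boundary condition together with the strong Feller property provides a minorization of the transition kernels which reduces the peripheral fixed space of $T_\mu'$ to $\R\nu^\star$. I expect part (e) --- specifically the relative compactness (equicontinuity) of the orbits $\{T_\mu(t)f:t\ge1\}$ --- to be the main obstacle, since this is exactly where the unboundedness of $\Omega$ and of the coefficients rules out global Schauder estimates or a spectral gap and where the smoothing estimates for the $T_n$ fail to be uniform in $n$; a secondary technical point is making the uniform-in-$n$ boundary regularity behind the strong Feller property in (d) precise.
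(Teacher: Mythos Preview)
Your construction for parts (a)--(c) and the equivalence in (d) is essentially the paper's argument: approximate on $\Omega_n$, show monotonicity of the resolvents via the maximum principle, pass to the supremum, and invoke Proposition~\ref{prop:supremumsemigroup}. One small point: your cut-off $\vartheta_n$ acts only on the integrand, whereas the paper multiplies by $\rho_n(z)$ as well (cf.\ \eqref{eq.mun}); this extra factor is what forces $\mu_n(z)\to 0$ as $z$ approaches the artificial sphere and hence guarantees the weak continuity of $z\mapsto\mu_n(z)$ on \emph{all} of $\partial\Omega_n$. Without it your map is discontinuous where $\partial\Omega$ meets $\partial B_{r_n}(0)$.

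There are two genuine gaps. For the strong Feller part of (d), your claim that $T_\mu(t)f\in\rg R(\lambda,A_\mu)=D(A_\mu)$ is not justified: the identity $R(\lambda,A_\mu)T_\mu(t)f=\int_0^\infty e^{-\lambda s}T_\mu(s+t)f\,ds$ shows only that $R(\lambda,A_\mu)T_\mu(t)f\in D(A_\mu)$, and for a $*$-semigroup that is neither strongly continuous nor analytic one cannot conclude $T_\mu(t)f\in D(A_\mu)$. Your fallback, uniform-in-$n$ boundary regularity for the $T_n$, is exactly the kind of estimate the paper is designed to avoid. The paper's device (Theorem~\ref{t.sfproperty}) is much simpler: once $\one\in D(A_\mu)$ one has $K_\mu(t)\one=\one$ for the kernel lift, so for $0\le f\le\one$ both $K_\mu(t)f=\sup_nK_n(t)f$ and $\one-K_\mu(t)f=K_\mu(t)(\one-f)=\sup_nK_n(t)(\one-f)$ are lower semicontinuous, hence $K_\mu(t)f$ is continuous. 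No regularity theory is needed.

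For (e), the obstacle you flag --- relative compactness of $\{T_\mu(t)f:t\ge1\}$ in the compact-open topology --- dissolves once the strong Feller property is in hand: by Lemma~\ref{l.ultra} the product of two positive strong Feller operators is ultra Feller, so $T_\mu(t)=T_\mu(1/2)T_\mu(t-1/2)$ maps the bounded set $\{T_\mu(t-1/2)f\}$ into an equicontinuous family. No Schauder estimates or uniformity in $n$ are required. Your proposed route to total variation convergence via a Doeblin-type minorization is not substantiated and is unlikely to hold in this generality (unbounded domain, unbounded coefficients, no compactness of the resolvent); the paper instead invokes an abstract Tauberian theorem for strong Feller semigroups \cite{g14} together with the ergodic structure on the norming dual pair $(C_b(\overline\Omega),\mathscr{M}(\overline\Omega))$ established in \cite{gk14}, which reduces everything to the one-dimensionality of $\fix(T_\mu)=\ker A_\mu$.
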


As we are dealing with elliptic equations with unbounded coefficients, we cannot expect uniqueness for the solution of the associated elliptic equation in general so that we may have several solution of the elliptic equation $\lambda u - \mathscr{A}u$ in $D_\mathrm{max}$. As $A_\mu$ is a bijection between $D(A_\mu)$ and $L^\infty(\Omega)$, part (d) of Theorem \ref{t.m1} characterizes unique solvability. As is to be expected, 
we can establish this unique solvability making use of an appropriate Lyapunov function, see Corollary \ref{cor:uniquesolution}. We should point out that our assumptions on the Lyapunov function in Corollary  \ref{cor:uniquesolution} do not involve the boundary condition (though we have to additionally assume a weak concentration assumption on the measures $\mu$) so that Lyapunov functions can be constructed as in \cite{mpw02}, imposing suitable growth conditions on the coefficients.

Lyapunov functions can also be used to establish existence of an invariant measure. However, typically the assumptions on such a Lyapunov function are more restrictive then in the case where we merely want to establish uniqueness for the elliptic equation. In our situation, we need to involve the boundary condition in our requirements on the Lyapunov function to ensure existence of an invariant measure, see Theorem \ref{t.lyapunov}.

In Section \ref{sect.example}, we present concrete examples where we can construct Lyapunov functions and thus establish existence of an invariant probability measure. In these examples, $\Omega$ is an outer domain and the differential operator $\mathscr{A}$ has coefficients which grow polynomially.
\smallskip

This article is organized as follows. In Section \ref{s.semigroup} we introduce the notion of a $*$-semigroup on the dual of a separable Banach space and prove some results that will be used later on. Section \ref{s.sf} is concerned with the notion of `kernel operator' and the strong Feller property. These two sections might also be of independent interest and are presented in an abstract framework. After recalling some results concerning diffusions with nonlocal boundary conditions on bounded domains in Section \ref{s.bdd}, we study the elliptic equation $\lambda u - \cA u = f$ in Section \ref{s.ell}. There we prove parts (a) and (b) of Theorem \ref{t.m1}. In Section \ref{s.lyap} we address the unique solvability of the elliptic equation. Parts (c) and (d) of Theorem \ref{t.m1} are proved in Section \ref{s.tmu} and our results concerning the asymptotic behavior of the semigroup (in particular the proof of part (e) of Theorem \ref{t.m1}) are found in Section \ref{s.asympt}. In the concluding Section \ref{sect.example}, we present our examples.

\section{Semigroups on the dual of a separable space}\label{s.semigroup}

As already mentioned, we will consider semigroups on the space $L^\infty(\Omega)$ in subsequent sections. It follows from a result of Lotz \cite{lotz1985} that a strongly continuous semigroup on $L^\infty(\Omega)$ is already norm continuous and thus has a bounded generator. To handle semigroups that are not strongly continuous, we will introduce the notion of a $*$-semigroup. At first, the only structural property of $L^\infty(\Omega)$ that we will use is that it is the dual space of the separable space $L^1(\Omega)$. We have therefore decided to treat semigroups on the dual of a separable space in general, as the results obtained here might also be of interest in other situations.  We should also mention that some of the results presented here can be obtained from the more general theory of ``semigroups on norming dual pairs'', see \cite{kunze2009,kunze2011}. However, the situation of a dual space is  easier to handle and proofs simplify. Thus, for the convenience of the reader, we will give a self-contained exposition and complete proofs.
\smallskip

Throughout this section, $X$ denotes a separable Banach space and $X^*$ its dual space. If $T$ is an adjoint operator, say
$T= S^*$ for some bounded linear operator $S\in \mathscr{L}(X)$, then clearly $T$ is a bounded operator on $X^*$ which is also weak$^*$-continuous. Conversely, if $T$ is a weak$^*$-continuous linear map on $X^*$, then $T$ is an adjoint operator, thus in particular bounded. To shorten notation, we write $\sigma^* \coloneqq \sigma (X^*, X)$ for the weak$^*$-topology on $X^*$ and $\cL(X^*,\sigma^*)$ for the space of weak$^*$-continuous operators on $X^*$.

\begin{lem}
	\label{lem:seqweakstarcont}
	Let $X$ be a separable Banach space and $T\colon X^* \to X^*$ be a bounded linear operator.
	 Then $T$ is weak$^*$-continuous if and only if $T$ is sequentially weak$^*$-continuous.
\end{lem}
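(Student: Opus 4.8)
**Proof plan for Lemma 2.2 (sequential weak\* continuity implies weak\* continuity).**

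The plan is to use the classical fact that on a dual space $X^*$ with $X$ separable, bounded subsets of $X^*$ are weak\*-metrizable, combined with the Krein–Šmulian theorem characterizing weak\*-continuous operators. Since one direction (weak\*-continuous $\Rightarrow$ sequentially weak\*-continuous) is trivial, all the work is in the converse.

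First I would recall the key structural facts. Because $X$ is separable, the weak\* topology on the closed unit ball $B_{X^*}$ of $X^*$ is metrizable; more generally every bounded subset of $X^*$ is weak\*-metrizable. Next, by the Krein–Šmulian theorem, a linear functional $\varphi$ on $X^*$ is weak\*-continuous if and only if its restriction to $B_{X^*}$ is weak\*-continuous, equivalently (by metrizability) weak\*-sequentially continuous on $B_{X^*}$. Reformulated for operators: a bounded linear operator $T\colon X^* \to X^*$ is weak\*-continuous if and only if for every $x \in X$ the functional $x^* \mapsto \langle x, Tx^* \rangle$ is weak\*-continuous, which by Krein–Šmulian holds if and only if this functional is weak\*-continuous on bounded sets.

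So the main step is: assume $T$ is sequentially weak\*-continuous and bounded, fix $x \in X$, and show $\psi\colon x^* \mapsto \langle x, Tx^*\rangle$ is weak\*-continuous on $B_{X^*}$ (say). Since $B_{X^*}$ is weak\*-metrizable and $\psi$ is sequentially weak\*-continuous there (this is immediate from sequential weak\*-continuity of $T$: if $x_n^* \xrightarrow{\sigma^*} x^*$ then $Tx_n^* \xrightarrow{\sigma^*} Tx^*$, hence $\langle x, Tx_n^*\rangle \to \langle x, Tx^*\rangle$), continuity on the metrizable set $B_{X^*}$ follows. Then Krein–Šmulian upgrades this to genuine weak\*-continuity of $\psi$ on all of $X^*$. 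Since this holds for every $x \in X$, the operator $T$ maps weak\*-convergent nets to weak\*-convergent nets in the sense required, i.e.\ $T \in \cL(X^*,\sigma^*)$; equivalently $T = S^*$ for some $S \in \cL(X)$, namely $S$ is the restriction of $T^*$ to $X \hookrightarrow X^{**}$.

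The only genuine obstacle is making sure the Krein–Šmulian reduction is invoked correctly: Krein–Šmulian is a statement about convexity/absolute convexity of sets, and here it is applied to show a linear functional that is weak\*-continuous on each bounded (norm-)ball is weak\*-continuous globally — the precise statement needed is that a linear functional on $X^*$ whose kernel intersects every ball $nB_{X^*}$ in a weak\*-closed set is weak\*-continuous. Verifying the relevant preimages are weak\*-closed on bounded sets uses exactly the weak\*-metrizability of bounded sets together with sequential weak\*-continuity. No new ideas beyond these two classical theorems are needed; the argument is short once they are in place.
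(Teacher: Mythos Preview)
Your proposal is correct and follows essentially the same route as the paper's proof: reduce weak$^*$-continuity of $T$ to weak$^*$-continuity of each functional $\phi_x(x^*)=\langle Tx^*,x\rangle$, apply Krein--\v{S}mulian to reduce to checking that $\ker\phi_x\cap\overline{B}_r(0)$ is weak$^*$-closed, and then use metrizability of the weak$^*$-topology on bounded sets (from separability of $X$) together with sequential weak$^*$-continuity. The paper carries this out in exactly the same way, so there is nothing to add.
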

\begin{proof}
	Clearly, every continuous mapping is sequentially continuous. So assume that $T$ is sequentially weak$^*$-continuous.
	By definition of the weak$^*$-topology it suffices to show that for every $x\in X$ 
	the linear mapping $\phi_x \colon X^* \to \R$, given by $\phi_x(x^*) \coloneqq \applied{Tx^*}{x}$, is continuous. This, in turn,  is equivalent
	to $\ker \phi_x$ being weak$^*$-closed. By the Krein-\v{S}mulian theorem, it suffices to show that
	$\phi_x \cap \overline{B}_r(0)$ is closed for each $x\in X$ and $r>0$, where $\overline{B}_r(0)$ denotes the norm-closed ball of radius $r>0$ in $X^*$.
	As $X$ is separable, the weak$^*$-topology is metrizable on norm-bounded sets, whence it suffices to check that
	$\phi_x \cap \overline{B}_r(0)$ is sequentially closed for each $x\in X$ and $r>0$. This, however, follows immediately from our assumption since each
	$\phi_x$ is sequentially weak$^*$-continuous.
\end{proof}

\begin{defn}\label{d.sg}
	Let $T=(T(t))_{t > 0} \subseteq \cL(X^*,\sigma^*)$ be a family of operators 
	such that $T(t+s)  =T(t)T(s)$ for all $t,s>0$  and that for all $x^* \in X^*$ and $x\in X$ the mapping $t\mapsto \applied{T(t)x^*}{x}$ is measurable.
	Then $T$ is called a \emph{$*$-semigroup on $X^*$}. If $\norm{T(t)}\leq 1$ for all $t>0$, then $T$ is called \emph{contractive}.
	Moreover, $T$ is said to be \emph{injective} if $T(t)x=0$ for all $t>0$ implies that $x=0$.
\end{defn}

Next, for a contractive $*$-semigroup $T=(T(t))_{t>0}$ and $\Re \lambda>0$ we define the operator $R(\lambda)$ on $X^*$ by
\begin{align}
\label{eqn:Rlambdadef}
\applied{R(\lambda)x^*}{x} \coloneqq \int_0^\infty e^{-\lambda t} \applied{T(t)x^*}{x} \,dt,
\end{align}
i.e.\ $R(\lambda)$ is the \emph{Laplace transform} of $t\mapsto T(t)x^*$, computed by means of the weak$^*$-integral. This is well-defined as the right-hand side
of \eqref{eqn:Rlambdadef} defines a bounded linear functional on $X$ in view of the boundedness of $T$.

\begin{prop}
\label{prop:pseudoresolvent}
	For a contractive $*$-semigroup $T=(T(t))_{t>0}$, the following assertions hold.
	\begin{enumerate}[(a)]
	\item $R(\lambda) \in \cL(X^*,\sigma^*)$ for all $\Re \lambda>0$.
	\item For $\lambda,\mu \in \{z\in \C : \Re z > 0\}$ we have 
	\begin{equation}\label{eq.reseq} 
	R(\lambda) - R(\mu) = (\mu-\lambda)R(\lambda)R(\mu),
	\end{equation}
	i.e.\ $R(\lambda)_{\Re \lambda>0}$ is a \emph{pseudoresolvent}.
	\end{enumerate}
\end{prop}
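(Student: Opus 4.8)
The plan is to establish part (a) via the sequential criterion of Lemma~\ref{lem:seqweakstarcont}, and part (b) by a Fubini computation that mirrors the classical derivation of the resolvent identity.

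For part (a), the remark preceding the proposition already shows that $R(\lambda)$ maps $X^*$ boundedly into $X^*$, so by Lemma~\ref{lem:seqweakstarcont} it is enough to verify that $R(\lambda)$ is sequentially $\sigma^*$-continuous. I would take a sequence $(x_n^*)$ in $X^*$ with $x_n^* \to x^*$ in $\sigma^*$; by the uniform boundedness principle $M\coloneqq \sup_n \norm{x_n^*}<\infty$. Fix $x\in X$. Every $T(t)$ is $\sigma^*$-continuous, hence sequentially $\sigma^*$-continuous, so $\applied{T(t)x_n^*}{x}\to\applied{T(t)x^*}{x}$ for each $t>0$, while contractivity gives the uniform bound $\abs{\applied{T(t)x_n^*}{x}}\le M\norm{x}$. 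Since $t\mapsto e^{-\Re\lambda t}M\norm{x}$ is integrable on $(0,\infty)$ and the relevant integrands are measurable by Definition~\ref{d.sg}, dominated convergence yields $\applied{R(\lambda)x_n^*}{x}\to\applied{R(\lambda)x^*}{x}$, which is exactly the sequential $\sigma^*$-continuity of $R(\lambda)$.

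For part (b), it suffices to treat $\lambda\ne\mu$, the case $\lambda=\mu$ being trivial. Fixing $x^*\in X^*$ and $x\in X$, I would start from
\[
\applied{R(\lambda)R(\mu)x^*}{x}=\int_0^\infty e^{-\lambda t}\applied{T(t)R(\mu)x^*}{x}\,dt,
\]
and use that $T(t)$, being $\sigma^*$-continuous, is an adjoint operator and therefore commutes with the weak$^*$-integral defining $R(\mu)$; together with the semigroup law $T(t)T(s)=T(t+s)$ this gives $\applied{T(t)R(\mu)x^*}{x}=\int_0^\infty e^{-\mu s}\applied{T(t+s)x^*}{x}\,ds$. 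The function $(t,s)\mapsto e^{-\lambda t-\mu s}\applied{T(t+s)x^*}{x}$ is jointly measurable --- because $(t,s)\mapsto t+s$ is continuous and $r\mapsto\applied{T(r)x^*}{x}$ is measurable --- and is dominated by $e^{-\Re\lambda t-\Re\mu s}\norm{x^*}\norm{x}\in L^1((0,\infty)^2)$ by contractivity, so Fubini's theorem applies. Substituting $r=t+s$ and interchanging the order of integration once more (justified by the same domination), one obtains
\[
\applied{R(\lambda)R(\mu)x^*}{x}=\int_0^\infty e^{-\mu r}\applied{T(r)x^*}{x}\int_0^r e^{-(\lambda-\mu)t}\,dt\,dr=\frac{1}{\lambda-\mu}\bigl(\applied{R(\mu)x^*}{x}-\applied{R(\lambda)x^*}{x}\bigr).
\]
Since $x^*\in X^*$ and $x\in X$ were arbitrary, this is precisely $R(\lambda)-R(\mu)=(\mu-\lambda)R(\lambda)R(\mu)$.

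Both computations are routine; the only point calling for a little care is the appeal to Fubini's theorem in part (b), i.e. the joint measurability of $(t,s)\mapsto\applied{T(t+s)x^*}{x}$, which is handled by composing the separately measurable orbit maps from Definition~\ref{d.sg} with the continuous addition map. I do not expect any real obstacle here.
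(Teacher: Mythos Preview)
Your proof is correct. Part (a) is handled exactly as in the paper, via Lemma~\ref{lem:seqweakstarcont} and dominated convergence.

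For part (b) your route differs from the paper's. The paper first establishes the auxiliary identity
\[
\int_0^h e^{-\lambda t} T(t)x^*\,dt = R(\lambda)x^* - e^{-\lambda h}T(h)R(\lambda)x^*,
\]
then uses it together with Fubini to derive the resolvent equation under the restriction $\Re\mu < \Re\lambda$, and finally removes this restriction by an approximation $\lambda_n = \lambda + n^{-1}$ and weak$^*$-convergence. Your direct Fubini/change-of-variables argument on the double integral $\int_0^\infty\int_0^\infty e^{-\lambda t-\mu s}\applied{T(t+s)x^*}{x}\,ds\,dt$ is cleaner for the proposition itself: absolute integrability holds for all $\Re\lambda,\Re\mu>0$ simultaneously, so no case distinction is needed. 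The only mild subtlety is that Definition~\ref{d.sg} gives Lebesgue (not a priori Borel) measurability of $r\mapsto\applied{T(r)x^*}{x}$; one should note that composing with the linear map $(t,s)\mapsto t+s$ still yields a Lebesgue measurable function on $(0,\infty)^2$, since the map preserves null sets. What your argument does \emph{not} produce is the displayed auxiliary identity above, which the paper reuses in the proof of Proposition~\ref{prop:AWF}; so if you adopt your approach you will need to prove that identity separately when it is needed there.
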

\begin{proof}
(a) In view of Lemma \ref{lem:seqweakstarcont}, this follows immediately from Equation \eqref{eqn:Rlambdadef} and the dominated convergence theorem.

(b) We first show that for each $\Re \lambda >0$, $x^* \in X^*$ and $h>0$
\begin{align}
\label{eqn:reseqproof}
	\int_0^h e^{-\lambda t} T(t)x^* \, dt = R(\lambda) x^* - e^{-\lambda h}T(h)R(\lambda)x^*,
\end{align}
where the integral on the left-hand side is to be understood as a weak$^*$-integral as before. To see this, fix $x\in X$ and let $S(t) \in \cL(X)$ be
such that $S(t)^* =T(t)$. Then
\begin{align*}
	\applied{T(h)R(\lambda)x^*}{x} &= \applied{R(\lambda)x^*}{S(h)x} = \int_0^\infty e^{-\lambda t}\applied{T(t)x^*}{S(h)x}\, dt\\
	&= \int_0^\infty e^{-\lambda t} \applied{T(t+h)x^*}{x} \, dt = e^{ht} \int_h^\infty e^{-\lambda t}\applied{T(t)x^*}{x} \, dt\\
	&= e^{\lambda h} \biggl( \applied{R(\lambda)x^*}{x} - \int_0^h e^{-\lambda r}\applied{T(r)x^*}{x} \,d r\biggr).
\end{align*}
As $x\in X$ was arbitrary, Equation \eqref{eqn:reseqproof} is proved. Now let $0 < \Re \mu < \Re \lambda$. Then we have that
\begin{align*}
	(\mu-\lambda) R(\lambda)R(\mu)x^* &= (\mu-\lambda) \int_0^\infty e^{-\lambda t}T(t)R(\mu)x^* \, d t\\
	&= (\mu-\lambda) \int_0^\infty e^{(\mu-\lambda)t} e^{-\mu t}T(t)R(\mu)x^* \, d t\\
	&= (\mu-\lambda)\int_0^\infty e^{(\mu-\lambda)t} \biggl( R(\mu)x^* - \int_0^te^{-\mu r} T(r)x^* \,d r\biggr) \,d t\\
	&= -R(\mu)x^* - (\mu-\lambda) \int_0^\infty e^{-\mu r} T(r)x^* \int_r^\infty e^{(\mu-\lambda)t}\,d t \,d r\\
	&= R(\lambda)x^* - R(\mu)x^*.
\end{align*}
Here, the third equality uses Equation \eqref{eqn:reseqproof}, the fourth Fubini's theorem and the inequality $\Re \mu < \Re \lambda$. Of course, each integral in this calculation
is to be understood in the weak$^*$-sense.

Finally, let us consider the situation that $0< \Re\mu = \Re \lambda$. We set $\lambda_n \coloneqq \lambda + n^{-1}$, so that $0<\Re\mu < \Re\lambda_n$. It follows from Equation \eqref{eqn:Rlambdadef} and dominated convergence, that $R(\lambda_n)x^*$ converges in the weak$^*$-sense to $R(\lambda)x^*$ for every $x^*\in X^*$. By the above, we have
\[
R(\lambda_n)x^*-R(\mu)x^* = (\mu-\lambda_n)R(\lambda_n)R(\mu)x^*
\]
for every $x^*\in X^*$ and $n\in \N$. Upon $n\to \infty$ we obtain \eqref{eq.reseq}.
\end{proof}

Our next goal is to prove that the Laplace transform $(R(\lambda))_{\Re \lambda > 0}$ determines the semigroup $(T(t))_{t>0}$ uniquely.
To this end, we use the following Lemma, taken from \cite[Lemma 3.16.5]{arendt2001}.

\begin{lem}
\label{lem:nullsetempty}
	Let $N\subseteq (0,\infty)$ be a set of Lebesgue measure $0$ and assume that $t,s \not\in N$ implies $t+s \not\in N$. Then $N=\varnothing$.
\end{lem}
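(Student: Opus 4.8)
The plan is to argue by contradiction. Write $M\coloneqq(0,\infty)\setminus N$; since $N$ is a Lebesgue null set, $M$ has full measure in $(0,\infty)$, and the stated implication says precisely that $M$ is closed under addition, i.e.\ $t,s\in M$ implies $t+s\in M$. Suppose, for contradiction, that there exists some $t_0\in N$, and consider the interval $I\coloneqq(0,t_0)$, which has measure $t_0>0$.

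First I would observe that $M\cap I$ has full measure in $I$, again because $N$ is a null set. Since $s\mapsto t_0-s$ is a measure-preserving bijection of $I$ onto itself, the reflected set $t_0-(M\cap I)\coloneqq\{t_0-s:s\in M\cap I\}$ likewise has full measure in $I$. Two co-null subsets of the finite-measure set $I$ must overlap; concretely, by subadditivity of Lebesgue measure,
\[
\bigl|(M\cap I)\cap\bigl(t_0-(M\cap I)\bigr)\bigr|\;\geq\;|M\cap I|+\bigl|t_0-(M\cap I)\bigr|-|I|\;=\;t_0\;>\;0,
\]
so the intersection is nonempty. Pick $r$ in it. Then $r\in M$, and $r=t_0-s$ for some $s\in M\cap I$, so that $t_0-r=s\in M$ as well. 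Applying closure of $M$ under addition to the pair $r$ and $t_0-r$ yields $t_0=r+(t_0-r)\in M$, contradicting $t_0\in N$. Hence no such $t_0$ exists, i.e.\ $N=\varnothing$.

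The argument is elementary and I do not expect a real obstacle: the only genuine idea is the reflection trick, which converts the purely additive hypothesis on $M$ into the statement that $t_0$, were it in $N$, would have to be written as a sum of two elements of $M$ lying in $(0,t_0)$ — and such a representation exists because $M\cap(0,t_0)$ and its reflection are both co-null in an interval of positive length. The remaining point, that two full-measure subsets of a set of positive finite measure intersect, is the one small piece of measure-theoretic bookkeeping, and it is handled by the displayed subadditivity inequality.
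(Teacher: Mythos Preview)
Your proof is correct. The paper itself does not prove this lemma; it merely quotes it from \cite[Lemma~3.16.5]{arendt2001}. Your reflection argument---writing $t_0$ as $r+(t_0-r)$ with both summands in the complement $M$ by intersecting the co-null sets $M\cap(0,t_0)$ and $t_0-(M\cap(0,t_0))$---is precisely the standard proof found in that reference, so there is nothing further to compare.
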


\begin{thm}
\label{thm:laplacetransformunique}
	Let $T_1=(T_1(t))_{t>0}$ and $T_2=(T_2(t))_{t>0}$ be contractive $*$-semigroups on $X^*$ with Laplace transforms $(R_1(\lambda))_{\Re \lambda>0}$ and
	$(R_2(\lambda))_{\Re \lambda>0}$, respectively. If there exists $\lambda_0\geq 0$ such that $R_1(\lambda) = R_2(\lambda)$ for all $\lambda > \lambda_0$,
	then $T_1=T_2$.
\end{thm}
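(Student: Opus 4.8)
The plan is to deduce from the agreement of the Laplace transforms that $T_1(t)=T_2(t)$ for all $t$ outside a single Lebesgue-null set, and then to use the semigroup law together with Lemma \ref{lem:nullsetempty} to show that this exceptional set is in fact empty.

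First I would use the separability of $X$ to fix a countable dense set $\{x_n : n\in\N\}\subseteq X$ and a countable weak$^*$-dense set $\{y_k : k\in\N\}\subseteq X^*$; the latter exists since, $X$ being separable, every closed ball of $X^*$ is weak$^*$-compact and weak$^*$-metrizable, hence weak$^*$-separable, and $X^*$ is a countable union of such balls. Fix $k,n\in\N$ and $i\in\{1,2\}$. By Definition \ref{d.sg} the function $t\mapsto\applied{T_i(t)y_k}{x_n}$ is measurable, and it is bounded by $\norm{y_k}\norm{x_n}$ because $T_i$ is contractive; by \eqref{eqn:Rlambdadef} its Laplace transform is $\lambda\mapsto\applied{R_i(\lambda)y_k}{x_n}$. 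Since $R_1(\lambda)=R_2(\lambda)$ for $\lambda>\lambda_0$, these two Laplace transforms agree on $(\lambda_0,\infty)$, so the uniqueness theorem for the scalar Laplace transform (e.g.\ \cite[Theorem~1.7.3]{arendt2001}) yields a Lebesgue-null set $N_{k,n}\subseteq(0,\infty)$ with $\applied{T_1(t)y_k}{x_n}=\applied{T_2(t)y_k}{x_n}$ for all $t\notin N_{k,n}$.

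Then I would put $N\coloneqq\bigcup_{k,n\in\N}N_{k,n}$, which is still Lebesgue-null. For $t\notin N$ and each fixed $k$, the functionals $T_1(t)y_k$ and $T_2(t)y_k$ coincide on the dense set $\{x_n\}$ and hence are equal; thus $T_1(t)$ and $T_2(t)$ agree on the weak$^*$-dense set $\{y_k\}$, and since both operators belong to $\cL(X^*,\sigma^*)$, they agree on all of $X^*$. Hence $T_1(t)=T_2(t)$ for every $t\notin N$. Finally, set $N_0\coloneqq\{t>0 : T_1(t)\neq T_2(t)\}$; then $N_0\subseteq N$ has Lebesgue measure zero, and if $t,s\notin N_0$ then $T_1(t+s)=T_1(t)T_1(s)=T_2(t)T_2(s)=T_2(t+s)$, so $t+s\notin N_0$. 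By Lemma \ref{lem:nullsetempty}, $N_0=\varnothing$, i.e.\ $T_1=T_2$.

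The one point requiring care is that the scalar Laplace inversion in the second step a priori produces an exceptional null set depending on the test pair $(y_k,x_n)$; it is precisely the separability of $X$ that lets one collect these into a single null set $N$, and the semigroup identity, via Lemma \ref{lem:nullsetempty}, that upgrades ``almost every $t$'' to ``every $t$''.
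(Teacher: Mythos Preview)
Your proof is correct and follows essentially the same route as the paper: scalar Laplace uniqueness on countably many test pairs, a single null set by separability, and Lemma~\ref{lem:nullsetempty} to remove it via the semigroup law. The only minor difference is in how you pass from the countable family in $X^*$ to the full operator identity: the paper picks a \emph{norming} sequence $(x_n^*)\subset X^*$ and works through the pre-adjoints $S_i(t)\in\cL(X)$, using that a norming set separates points of $X$; you instead pick a weak$^*$-dense sequence $(y_k)$ and use the weak$^*$-continuity of $T_i(t)$ directly. Both variants work and are equally short.
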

\begin{proof}
	Let $\lambda_0\geq 0$ such that 
	\[ \int_0^\infty e^{-\lambda t}\applied{T_1(t)x^*}{x} \,d t = \int_0^\infty e^{-\lambda t} \applied{T_2(t)x^*}{x} \,d t\]
	for all $x^* \in X^*$, $x\in X$  and $\lambda > \lambda_0$. By the uniqueness theorem for Laplace transforms \cite[Theorem 1.7.3]{arendt2001},
	there is a null set $N(x^*,x)$ such that
	\[ \applied{T_1(t)x^*}{x} = \applied{T_2(t)x^*}{x} \text{ for all } t\not\in N(x^*,x).\]
	Now pick a dense sequence $(x_n)\subseteq X$ and define $N(x^*) \coloneqq \bigcup_{n\in\N} N(x^*,x_n)$. Then each $N(x^*)$ is a null set
	and $T_1(t)x^*  =T_2(t)x^*$ for all $t\not\in N(x^*)$ as $(x_n)$ separates the points of $X^*$.
	Since $X$ is separable, we may find a norming sequence $(x_n^*)\subseteq X^*$ 
	and put $N \coloneqq \bigcup_{n\in\N} N(x_n^*)$.
	Let $S_1(t),S_2(t) \in \cL(X)$ be such that $S_1^*(t)=T_1(t)$ and $S_2^*(t) = T_2(t)$ for all $t>0$.
	Since 
	\[ \applied{x_n^*}{S_1(t)x} = \applied{T_1(t)x_n^*}{x} = \applied{T_2(t)x_n^*}{x} = \applied{x_n^*}{S_2(t)x}\]
	for all $t\not \in N$, $x\in X$ and $n\in\N$ and since the norming set $\{x_n^*: n\in\N\}$ separates the points in $X$,
	we have $S_1(t) = S_2(t)$ and thus also $T_1(t)=T_2(t)$ for all $t\not\in N$.
	Now consider $M\coloneqq \{ t>0 :T_1(t)\neq T_2(t)\}$. Then $M\subseteq N$ is a null set and it follows from the semigroup law that
	$t,s \not\in M$ implies that $t+s \not\in M$. Hence, $M=\varnothing$ by Lemma \ref{lem:nullsetempty}.
\end{proof}

\begin{example}
Without the assumption that $X$ be separable, the Laplace transform does not determine the semigroup uniquely, even if $X$ is a Hilbert space. Indeed, consider counting measure $\zeta$ on $\R$. The corresponding $L^2$-space is $\ell^2(\R)$ and consists of functions of the form $f(x) = \sum \alpha_n\mathds{1}_{\{x_n\}}$, where $(x_n)$ is a sequence of real numbers and $(\alpha_n)$ is a square-summable sequence. Now consider the shift semigroup $T=(T(t))_{t>0}$, given by $T(t)f(x) = f(x+t)$.
Then, given $f,g\in \ell^2(\R)$, we have $\langle T(t)f, g\rangle = 0$, except for at most countably many values of $t$. Consequently, the Laplace transform is given by $R(\lambda) \equiv 0$, whereas the semigroup is \emph{not} the zero semigroup.
\end{example}

Next, we want to associate a generator to a $*$-semigroup, i.e.\ an operator such that the resolvent of that operator is given as the Laplace transform of the semigroup. However, in order to do so, the Laplace transform has to 
consist of injective operators, which is not always the case. 

Since $(R(\lambda))_{\Re \lambda>0}$ is a pseudoresolvent by Proposition \ref{prop:pseudoresolvent}, the kernel $\ker R(\lambda)$  for $\lambda \in \C_+ \coloneqq \{ z\in \C : \Re z >0\}$ is independent of $\lambda$. Moreover, if $\ker R(\lambda)=\{0\}$ for some/all $\lambda \in \C_+$, then there exists an operator $A$ with $\C_+ \subseteq \rho(A)$ and $R(\lambda,A) =R(\lambda)$ for all $\lambda \in \C_+$, see \cite[Proposition B.6]{arendt2001}. In that case, $D(A) = \rg R(\lambda)$ and $A=\lambda -R(\lambda)^{-1}$.
	The proof of Theorem \ref{thm:laplacetransformunique} shows that $\ker R(\lambda)=\{0\}$ for some/all $\lambda \in \C_+$ if and only if the semigroup $T$ is injective. We may thus define:

\begin{defn}
Let $T=(T(t))_{t>0} \subseteq \cL(X^*,\sigma^*)$ be an injective  and contractive $*$-semigroup. The unique operator $A$ such that
\[ R(\lambda,A) x^* = \int_0^\infty e^{-\lambda t} T(t)x^* \,d t \]
for all $\Re \lambda >0$ is called the \emph{generator of $T$}.
\end{defn}

We can now characterize the generator of an injective and contractive $*$-semigroup as follows:

\begin{prop}
\label{prop:AWF}
	Let $T=(T(t))_{t>0} \subseteq \cL(X^*,\sigma^*)$ be an injective and contractive $*$-semigroup with generator $A$. Then for all $y^*,z^* \in X^*$ the following are equivalent:
	\begin{enumerate}[(i)]
	\item $y^* \in D(A)$ and $Ay^* = z^*$.
	\item $\int_0^t T(s)z^* \,d s = T(t)y^*-y^*$ for all $t>0$.
	\end{enumerate}
\end{prop}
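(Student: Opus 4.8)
The plan is to prove the equivalence by going through the Laplace transform, exploiting that $A$ is \emph{defined} via $R(\lambda,A)=\int_0^\infty e^{-\lambda t}T(t)\,dt$, together with the integrated identity \eqref{eqn:reseqproof} already established in the proof of Proposition \ref{prop:pseudoresolvent}. Throughout, all integrals are weak$^*$-integrals, justified exactly as in that proof: for fixed $x\in X$ one reduces to scalar integrals $t\mapsto\applied{T(t)x^*}{x}$, which are bounded and measurable, so Laplace transforms and Fubini apply.

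\medskip

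For the implication (i)$\Rightarrow$(ii), suppose $y^*\in D(A)$ with $Ay^*=z^*$, i.e.\ $y^*=R(\lambda)(\lambda y^*-z^*)$ for every $\lambda\in\C_+$. Fix $t>0$ and define $w^*\coloneqq\int_0^tT(s)z^*\,ds$ and $v^*\coloneqq T(t)y^*-y^*$; both lie in $X^*$ by weak$^*$-compactness of bounded sets (or simply because they are bounded functionals on $X$). By the uniqueness theorem for Laplace transforms (Theorem \ref{thm:laplacetransformunique} in spirit, or rather \cite[Theorem 1.7.3]{arendt2001} applied scalarly) it suffices to show that $w^*$ and $v^*$ have the same Laplace transform when we regard $s\mapsto T(s)w^*$ and $s\mapsto T(s)v^*$ — but actually the cleaner route is to compute $\int_0^\infty e^{-\lambda t}\applied{w^*}{x}\,dt$ and $\int_0^\infty e^{-\lambda t}\applied{v^*}{x}\,dt$ directly in the variable $t$ and match them. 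For the first, Fubini gives $\int_0^\infty e^{-\lambda t}\int_0^tT(s)z^*\,ds\,dt=\lambda^{-1}R(\lambda)z^*$. For the second, using $R(\lambda)z^*=R(\lambda)(\lambda y^*-\lambda y^*+z^*)=\lambda R(\lambda)y^*-y^*$ (the last equality is precisely $y^*\in D(A)$, $Ay^*=z^*$), we get $\lambda^{-1}R(\lambda)z^*=R(\lambda)y^*-\lambda^{-1}y^*$. On the other hand $\int_0^\infty e^{-\lambda t}(T(t)y^*-y^*)\,dt=R(\lambda)y^*-\lambda^{-1}y^*$ by definition of $R(\lambda)$. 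Hence the two $X^*$-valued functions $t\mapsto w^*$ (constant) and $t\mapsto v^*$ — no: here $w^*,v^*$ depend on $t$. So I phrase it as: the functions $g_1(t)\coloneqq\int_0^tT(s)z^*\,ds$ and $g_2(t)\coloneqq T(t)y^*-y^*$ are both weak$^*$-continuous for $t>0$ (the first by dominated convergence, the second since $T(t)\in\cL(X^*,\sigma^*)$ and orbits are... actually weak$^*$-continuity of $g_2$ in $t$ is not automatic), so instead I only claim they have equal Laplace transforms, which forces $g_1=g_2$ a.e., and then I upgrade to all $t$ using the semigroup law and Lemma \ref{lem:nullsetempty} exactly as in the proof of Theorem \ref{thm:laplacetransformunique}: the set where $g_1(t)\neq g_2(t)$ is a null set closed under addition of complements, hence empty. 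The equality of Laplace transforms is the computation above: $\int_0^\infty e^{-\lambda t}g_1(t)\,dt=\lambda^{-1}R(\lambda)z^*=R(\lambda)y^*-\lambda^{-1}y^*=\int_0^\infty e^{-\lambda t}g_2(t)\,dt$.

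\medskip

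For (ii)$\Rightarrow$(i), assume $\int_0^tT(s)z^*\,ds=T(t)y^*-y^*$ for all $t>0$. Take Laplace transforms of both sides in $t$: the left side yields $\lambda^{-1}R(\lambda)z^*$ by Fubini, the right side yields $R(\lambda)y^*-\lambda^{-1}y^*$ by definition of $R(\lambda)$. Equating and multiplying by $\lambda$ gives $R(\lambda)z^*=\lambda R(\lambda)y^*-y^*$, i.e.\ $y^*=R(\lambda)(\lambda y^*-z^*)$. Since $\ker R(\lambda)=\{0\}$ (as $T$ is injective, by the remark preceding the definition of the generator), we have $y^*\in\rg R(\lambda)=D(A)$ and $Ay^*=\lambda y^*-R(\lambda)^{-1}y^*=\lambda y^*-(\lambda y^*-z^*)=z^*$.

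\medskip

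I expect the main obstacle to be the passage from ``equal Laplace transforms'' to ``equal for all $t>0$'' in the direction (i)$\Rightarrow$(ii), since the naive route requires weak$^*$-continuity of $t\mapsto T(t)y^*$, which is not available for $*$-semigroups. The clean fix is to avoid continuity entirely and reuse the null-set/Lemma \ref{lem:nullsetempty} argument from Theorem \ref{thm:laplacetransformunique}: equal scalar Laplace transforms give, for each pair $(x^*,x)$, equality off a null set; a separability/norming-sequence argument produces a single null set $N$ off which $g_1=g_2$; and the semigroup law makes $\{t:g_1(t)\neq g_2(t)\}$ a null set stable under $t,s\notin M\Rightarrow t+s\notin M$, hence empty. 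Everything else is a short Fubini computation and the definitional characterization $D(A)=\rg R(\lambda)$, $A=\lambda-R(\lambda)^{-1}$.
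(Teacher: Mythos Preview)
Your (ii)$\Rightarrow$(i) is identical to the paper's. For (i)$\Rightarrow$(ii) you take a genuinely different route. The paper fixes $t>0$ and $x\in X$, sets $x^*=\lambda y^*-z^*$, and observes that the two entire functions
\[
f(\lambda)=\lambda\!\int_0^t e^{-\lambda s}\applied{T(s)y^*}{x}\,ds-\!\int_0^t e^{-\lambda s}\applied{T(s)z^*}{x}\,ds,\qquad
g(\lambda)=\applied{y^*}{x}-e^{-\lambda t}\applied{T(t)y^*}{x}
\]
agree on $\{\Re\lambda>0\}$ by \eqref{eqn:reseqproof}; analytic continuation to $\lambda=0$ gives (ii) \emph{for every} $t$ directly, with no null-set bookkeeping. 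Your argument instead takes Laplace transforms in the variable $t$, matches them, and then upgrades a.e.\ equality to everywhere via Lemma~\ref{lem:nullsetempty}. This is correct, but the key step you only assert---that $t,s\notin M\Rightarrow t+s\notin M$ for $M=\{t:g_1(t)\neq g_2(t)\}$---deserves one line of justification: both $g_1$ and $g_2$ satisfy the cocycle identity $g_i(t+s)=g_i(s)+T(s)g_i(t)$ (for $g_1$ by the substitution $r\mapsto s+r$ and weak$^*$-continuity of $T(s)$, for $g_2$ by the semigroup law), so equality at $s$ and $t$ forces equality at $s+t$. Also, in your separability step you only need a dense sequence $(x_n)\subset X$, not pairs $(x^*,x)$; the functions $g_1,g_2$ are already fixed.

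In short: the paper's analytic-continuation trick is shorter and sidesteps the a.e.\ issue entirely, while your approach recycles the Lemma~\ref{lem:nullsetempty} machinery at the cost of the extra cocycle verification. Both are valid.
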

\begin{proof}
	(i) $\Rightarrow$ (ii): For fixed $t>0$ and $x\in X$ we define the holomorphic functions $f,g \colon \C \to \C$ by
	\begin{align*}
	f(\lambda) &\coloneqq \lambda\int_0^t e^{-\lambda s}\applied{T(s)y^*}{x} \,d s - \int_0^t e^{-\lambda s}\applied{T(s)z^*}{x} \,d s \\
	g(\lambda) &\coloneqq \applied{y^*}{x} - e^{-\lambda t}\applied{T(t)y^*}{x}.
	\end{align*}
	For $\Re \lambda>0$, it follows from Equation \eqref{eqn:reseqproof} with $x^* = \lambda y^* -z^*$ that
	\[ \int_0^t e^{-\lambda s}T(s)x^* \,d s = R(\lambda)x^* - e^{-\lambda t}T(t)R(\lambda)x^* =y^* -e^{-\lambda t}T(t)y^*\]
	since $R(\lambda )x^* = y^*$. This shows that $f(\lambda) = g(\lambda)$ for all $\Re \lambda>0$ and thus, by the uniqueness theorem
	for holomorphic functions, for all $\lambda \in \C$. In particular, $f(0) = g(0)$ and this implies (ii) as $x\in X$ was arbitrary.

	(ii) $\Rightarrow$ (i): If (ii) holds, it follows from Fubini's theorem that
	\begin{align*}
	\lambda R(\lambda)y^* - y^* &= \int_0^\infty \lambda e^{-\lambda t}\big(T(t)y^* - y^*\big) \,d t = \int_0^\infty \lambda e^{-\lambda t} \int_0^t T(s)z^* \,d s \,d t \\
	&= \int_0^\infty \int_s^\infty \lambda e^{-\lambda t}T(s)z^* \, dt\, ds = \int_0^\infty e^{-\lambda s}T(s)z^*\, ds = R(\lambda)z^*.
	\end{align*}
	This shows that $y^* = R(\lambda)(\lambda y^* -z^*) \in D(A)$ and $Ay^* = z^*$.
\end{proof}

Recall that the semigroups we consider here are not strongly continuous (not even weak$^*$-continuous). Nevertheless, 
we can expect  some continuity of the orbits.

\begin{cor}\label{c.strongcont}
Let $T= (T(t))_{t>0} \subset \mathscr{L}(X^*, \sigma^*)$ be an injective and contractive $*$-semigroup with generator $A$. Then for $x^*\in \overline{D(A)}$ the orbit $t\mapsto T(t)x^*$ is $\|\cdot\|$-continuous. In particular, $T$ is strongly continuous if and only if $\overline{D(A)} = X^*$.
\end{cor}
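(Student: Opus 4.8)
The plan is to deduce $\|\cdot\|$-continuity of $t\mapsto T(t)x^*$ first for $x^*\in D(A)$, then extend to the closure by a density-plus-uniform-boundedness argument, and finally read off the characterization of strong continuity.

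\emph{Step 1: continuity on $D(A)$.} Let $x^*\in D(A)$ and put $z^* \coloneqq Ax^*$. By Proposition \ref{prop:AWF}, the identity
\[
T(t)x^* - x^* = \int_0^t T(s)z^*\,ds
\]
holds for all $t>0$, where the integral is the weak$^*$-integral. The key observation is that this weak$^*$-integral is in fact a norm-Bochner integral: since $T$ is contractive, $\|T(s)z^*\|\leq \|z^*\|$ for all $s$, and the integrand is weak$^*$-measurable with essentially separably-valued range because $X$ is separable (so that the weak$^*$-topology on bounded sets of $X^*$ is metrizable), hence Pettis' theorem applies. Consequently $t\mapsto \int_0^t T(s)z^*\,ds$ is Lipschitz in norm with constant $\|z^*\|$, and therefore $t\mapsto T(t)x^*$ is $\|\cdot\|$-continuous on $(0,\infty)$. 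To also get continuity at $0$ (in the sense $\lim_{t\downarrow 0}\|T(t)x^*-x^*\|=0$) one simply lets $t\downarrow 0$ in the same identity, the integral tending to $0$ in norm.

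\emph{Step 2: passage to the closure.} Let $x^*\in \overline{D(A)}$ and $\eps>0$. Choose $y^*\in D(A)$ with $\|x^*-y^*\|<\eps$. Since each $T(t)$ is contractive,
\[
\|T(t)x^* - T(s)x^*\| \leq \|T(t)(x^*-y^*)\| + \|T(t)y^* - T(s)y^*\| + \|T(s)(y^*-x^*)\| < 2\eps + \|T(t)y^*-T(s)y^*\|,
\]
and the middle term is small for $s,t$ close by Step 1. This shows $t\mapsto T(t)x^*$ is (uniformly) continuous in norm on $(0,\infty)$, with the same argument at $0$. Thus the orbit of every $x^*\in\overline{D(A)}$ is norm-continuous, including at $0$.

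\emph{Step 3: the equivalence.} If $\overline{D(A)}=X^*$, then by Step 2 every orbit is norm-continuous, hence in particular weak$^*$-continuous, and $T$ is strongly continuous. Conversely, if $T$ is strongly continuous, then for every $x^*\in X^*$ we have $T(t)x^*\to x^*$, say weak$^*$ (or in norm), as $t\downarrow 0$; but $R(\lambda)x^*=\int_0^\infty e^{-\lambda t}T(t)x^*\,dt$ and $\lambda R(\lambda)x^*\to x^*$ as $\lambda\to\infty$ by dominated convergence, while $\lambda R(\lambda)x^*=\lambda R(\lambda,A)x^*\in D(A)$; hence $x^*\in\overline{D(A)}$. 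This gives $\overline{D(A)}=X^*$.

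\emph{Main obstacle.} The only genuinely delicate point is Step 1: one must justify that the weak$^*$-integral appearing in Proposition \ref{prop:AWF} coincides with a norm-convergent (Bochner) integral, so that the resulting primitive is norm-Lipschitz rather than merely weak$^*$-Lipschitz. This is exactly where separability of $X$ is used, via metrizability of the weak$^*$-topology on bounded sets and Pettis measurability. Everything else is a routine $\eps$/$3$ argument and an application of the already-established Laplace transform machinery.
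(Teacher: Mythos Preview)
Your overall structure matches the paper's proof exactly: use Proposition~\ref{prop:AWF} to obtain a Lipschitz estimate on orbits for $x^*\in D(A)$, then a $3\eps$-argument for the closure. The paper is in fact terser and omits your Step~3 entirely.

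However, the point you flag as the ``main obstacle'' is a red herring, and your resolution of it is flawed. You claim the weak$^*$-integral is a Bochner integral via Pettis' theorem, arguing that separability of $X$ forces the range to be essentially separably valued. This fails on two counts: metrizability of the weak$^*$-topology on bounded sets yields weak$^*$-separability of the range, not \emph{norm}-separability (think of $t\mapsto \one_{[0,t]}$ in $L^\infty[0,1]=(L^1[0,1])^*$, which is weak$^*$-continuous with non-norm-separable range); and Pettis' theorem requires measurability against $X^{**}$, whereas you only have weak$^*$-measurability against $X$. Fortunately none of this is needed. For any weak$^*$-integral one has directly
\[
\Big\|\int_s^t T(r)z^*\,dr\Big\| = \sup_{\|x\|\le 1}\Big|\int_s^t \langle T(r)z^*,x\rangle\,dr\Big| \le \int_s^t \|T(r)z^*\|\,dr \le |t-s|\,\|z^*\|,
\]
which is exactly the Lipschitz estimate you want. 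This is what the paper writes down (without comment), and it bypasses Bochner integrability entirely.
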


\begin{proof}
For $x^* \in D(A)$ we have, as a consequence of Proposition \ref{prop:AWF}, that
\[
\|T(t)x^*-T(s)x^*\| = \Big\|\int_s^t T(r)Ax^*\, dr\Big\| \leq |t-s| \|Ax^*\| \to 0
\]
as $t\to s$. Making use of the uniform boundedness of the operators, a $3\eps$-argument shows that this remains true for $x^* \in \overline{D(A)}$.
\end{proof}

We now add an additional structure to our space $X$, namely, we assume that $X$ is a Banach lattice. We denote the positive cone of $X$ by $X_+$. The dual cone in $X^*$ is denoted by $X^*_+$. Note that we have $x^* \in X^*_+$ if and only if 
$\langle x^*,x\rangle \geq 0$ for all $x\in X_+$. An operator $T$ on $X^*$ is called \emph{positive} if $Tx^* \in X^*_+$
whenever $x^*\in X^*_+$. This notion defines an ordering on $\mathscr{L}(X^*)$ by setting $T_1 \leq T_2$ if and only if $T_2-T_1 \geq 0$. We call a $*$-semigroup $(T(t))_{t>0}$ \emph{positive} if $T(t)$ is a positive operator for every $t>0$.

\begin{prop}
\label{prop:positivesemigroups}
Let $X$ be a separable Banach lattice and $T_1=(T_1(t))_{t>0}$ and $T_2=(T_2(t))_{t\geq 0}$ be contractive $*$-semigroups
on $X^*$ with Laplace transforms $(R_1(\lambda))_{\Re \lambda>0}$ and $(R_2(\lambda))_{\Re \lambda>0}$, respectively,
and suppose that $T_1$ is positive.  

Then $T_1(t) \leq T_2(t)$ for all $t>0$ if and only if there exists $\lambda_0 \geq 0$ such that 
$R_1(\lambda) \leq R_2(\lambda)$ for all $\lambda > \lambda_0$.
In particular, $T_2$ is positive if and only if $R_2(\lambda)$ is positive for all real $\lambda$ large enough.
\end{prop}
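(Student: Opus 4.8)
The plan is to treat the two implications separately and then obtain the ``in particular'' statement by specializing $T_1$ to the zero semigroup, which is a positive contractive $*$-semigroup with Laplace transform $R_1\equiv 0$. The ``only if'' direction is routine: if $T_1(t)\le T_2(t)$ for all $t>0$, then for $x^*\in X^*_+$ and $x\in X_+$ one multiplies $\applied{T_1(t)x^*}{x}\le\applied{T_2(t)x^*}{x}$ by $e^{-\lambda t}>0$ and integrates over $(0,\infty)$ to obtain $\applied{R_1(\lambda)x^*}{x}\le\applied{R_2(\lambda)x^*}{x}$ for every real $\lambda>0$, so $\lambda_0=0$ works. For the converse I would first note that positivity of $T_1$ forces $R_1(\lambda)=\int_0^\infty e^{-\lambda t}T_1(t)\,dt\ge 0$ for all $\lambda>0$ (the weak$^*$-integral of positive operators is positive), so the hypothesis gives $0\le R_1(\lambda)\le R_2(\lambda)$ for $\lambda>\lambda_0$. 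The elementary identity $S_2^n-S_1^n=\sum_{j=0}^{n-1}S_2^{n-1-j}(S_2-S_1)S_1^j$ then shows that $0\le S_1\le S_2$ implies $0\le S_1^n\le S_2^n$ for positive operators, compositions and sums of positive operators being positive; with $S_i=R_i(\lambda)$ this yields $0\le R_1(\lambda)^n\le R_2(\lambda)^n$ for all $\lambda>\lambda_0$ and $n\in\N$.

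The key step is then to recover each $T_i(t)$ from the powers of its Laplace transform via the Post--Widder inversion formula. Differentiating \eqref{eqn:Rlambdadef} under the integral sign (legitimate because $\norm{T_i(s)x^*}\le\norm{x^*}$) and using that the pseudoresolvent identity \eqref{eq.reseq} gives $R_i^{(n-1)}(\lambda)=(-1)^{n-1}(n-1)!\,R_i(\lambda)^n$, one obtains
\[
\Bigl(\tfrac nt\Bigr)^{n}\applied{R_i(n/t)^n x^*}{x}=\int_0^\infty\applied{T_i(s)x^*}{x}\,\rho_{n,t}(s)\,ds,\qquad
\rho_{n,t}(s)\coloneqq\frac{(n/t)^n}{(n-1)!}\,s^{n-1}e^{-ns/t}.
\]
The function $\rho_{n,t}$ is a probability density on $(0,\infty)$ with mean $t$ and variance $t^2/n$, hence an approximate identity concentrating at $s=t$ as $n\to\infty$, so the right-hand side converges to $\applied{T_i(t)x^*}{x}$ at every Lebesgue point of the bounded measurable function $s\mapsto\applied{T_i(s)x^*}{x}$; this is the scalar Post--Widder theorem, cf.\ \cite[Thm.~1.7.7]{arendt2001}. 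Fixing $x^*\in X^*_+$ and $x\in X_+$ and letting $n\to\infty$ through integers $n>\lambda_0 t$ in the inequality furnished by the power monotonicity, one gets $\applied{T_1(t)x^*}{x}\le\applied{T_2(t)x^*}{x}$ for every $t$ outside some Lebesgue null set $N(x^*,x)$.

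It remains to upgrade this to all $t>0$ and all admissible $x^*,x$. Since $X$ is separable, $X_+$ is norm-separable and $X^*_+$ is weak$^*$-separable (being a countable union of weak$^*$-metrizable, hence separable, bounded pieces); fix a norm-dense sequence $(x_m)\subseteq X_+$ and a weak$^*$-dense sequence $(x^*_k)\subseteq X^*_+$. Using weak$^*$-continuity of the operators $T_i(t)$, the definition of the dual cone, and norm-closedness of $X_+$, one checks that the inequalities $\applied{T_1(t)x^*_k}{x_m}\le\applied{T_2(t)x^*_k}{x_m}$ for all $k,m$ already imply $T_1(t)\le T_2(t)$. Hence $N\coloneqq\bigcup_{k,m}N(x^*_k,x_m)$ is a null set and $T_1(t)\le T_2(t)$ for every $t\notin N$, so $M\coloneqq\{t>0:T_1(t)\not\le T_2(t)\}$ is null. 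Finally, if $t,s\notin M$, then $T_2(t),T_2(s)\ge T_1(t),T_1(s)\ge 0$ and
\[
T_2(t+s)-T_1(t+s)=T_2(t)\bigl(T_2(s)-T_1(s)\bigr)+\bigl(T_2(t)-T_1(t)\bigr)T_1(s)\ge 0,
\]
so $t+s\notin M$, and Lemma \ref{lem:nullsetempty} forces $M=\varnothing$, which is the assertion.

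I expect the main obstacle to be the Post--Widder step and the bookkeeping around it: the inversion formula delivers convergence only at Lebesgue points and only after testing against fixed $x^*\in X^*_+$, $x\in X_+$, so separability of $X$ is precisely what allows a reduction to countably many test elements, and the semigroup law (via Lemma \ref{lem:nullsetempty}) is what removes the final exceptional null set. By comparison, verifying monotonicity of operator powers, positivity of the weak$^*$-integral, and the differentiation under the integral sign are routine.
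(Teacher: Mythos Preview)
Your proof is correct and follows essentially the same route as the paper's: Post--Widder inversion to recover $T_i(t)$ from the resolvent powers, separability of $X$ to reduce to countably many test pairs, and Lemma~\ref{lem:nullsetempty} together with the semigroup law to eliminate the residual null set. You are in fact slightly more explicit than the paper in justifying $R_1(\lambda)^n\le R_2(\lambda)^n$ via the telescoping identity $S_2^n-S_1^n=\sum_{j=0}^{n-1}S_2^{n-1-j}(S_2-S_1)S_1^j$; the paper uses this implicitly when asserting that the Post--Widder approximants are nonnegative.
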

\begin{proof}
	If $T_1(t) \leq T_2(t)$ for all $t>0$, then clearly $R_1(\lambda) \leq R_2(\lambda)$ for all $\lambda>0$. Now suppose that $R_1(\lambda) \leq R_2(\lambda)$
	holds for all $\lambda > \lambda_0$.
	Let $x\in X_+$, $x^* \in X^*_+$ and define $r_{x,x^*} \colon (\lambda_0,\infty) \to [0,\infty)$ by
	\[ r_{x,x^*}(\lambda) \coloneqq \applied{R_2(\lambda)x^* - R_1(\lambda)x^*}{x} 
	= \int_0^\infty e^{-\lambda t}\applied{T_2(t)x^*-T_1(t)x^*}{x} \, dt.\]
	It follows from the resolvent equation \eqref{eq.reseq}, 
	that $r_{x,x^*}$ is infinitely many times differentiable with
	\[ \frac{d^n}{d \lambda^n} r_{x,x^*}(\lambda) = (-1)^n n!\applied{R^{n+1}_2(\lambda)x^* - R^{n+1}_1(\lambda)x^*}{x}.\]
	Now the Post--Widder inversion formula \cite[Proposition 1.7.7] {arendt2001} implies that there is a null set $N(x^*, x) \subset (0,\infty)$ such that
	\[ \applied{T_2(t)x^* - T_1(t)x^*}{x} =\lim_{n\to \infty} (-1)^n \frac{1}{n!} \Big( \frac{n}{t}\Big)^{n+1}\frac{d^n}{d \lambda^n}r_{x,x^*}\Big(\frac{n}{t}\Big)\geq 0\] 
	for all $t\in (0,\infty)\setminus N(x^*,x)$.
	Now we proceed similarly to the proof of Theorem \ref{thm:laplacetransformunique}.
	Since $X$ is separable, we find a sequence $(x_n) \subseteq X_+$  which is dense in the positive cone $X_+$ 
	and a sequence $(x_n^*)\subseteq X^*_+$ which is norming in $X$. 
	Set $N\coloneqq \bigcup_{n,m\in \N} N(x_n^*,x_m)$.
	Since $\{x_n^* : n\in\N\}$ is weak$^*$-dense in $X^*_+$, it follows from
	\[ \applied{T_2(t)x_n^*-T_1(t)x_n^*}{x_m}\geq 0\]
	for all $n,m\in\N$ and $t\in (0,\infty)\setminus N$,
	that $T_2(t)- T_1(t)$ is positive for all $t\in (0,\infty)\setminus N$.
	Now consider $M\coloneqq \{ t>0 : T_1(t) \not\leq T_2(t)\}$, which is contained in $N$ and thus a null set.
	Moreover, for $t,s \not\in M$ it follows from the positivity of $T_1$ that
	\[ T_1(t+s) = T_1(t)T_1(s) \leq T_1(t) T_2(s) \leq T_2(t)T_2(s) = T_2(t+s),\]
	i.e.\ $t+s\not\in M$.
    Thus, Lemma \ref{lem:nullsetempty} implies that $T_1(t)\leq T_2(t)$ for all $t\in (0,\infty)$.
\end{proof}

Recall that a Banach lattice is called a \emph{KB-space} if every increasing and norm-bounded net of positive vectors converges in norm,
cf.\ \cite[Definition 2.4.11]{meyer1991}.  For instance, every $L^1$-space has this property.

\begin{lem}\label{l.increase}
Let $X$ be a separable KB-space and let $(U_n) \subset \cL(X^*, \sigma^*)$ be an increasing sequence of positive and contractive
operators.
\begin{enumerate}
\item There is a positive contraction $U \in \cL (X^*, \sigma^*)$ such that $Ux^* = \sup_{n\in\N} U_nx^*$ for 
all $x^*\in X^*_+$. We write $U_n \uparrow U$.
\item If $(V_n) \subset \cL (X^*, \sigma^*)$ is another increasing sequence of positive contractions with $V_n \uparrow V$, then
$U_nV_n\uparrow UV$.
\end{enumerate}
\end{lem}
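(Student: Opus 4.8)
The plan is to build $U$ not on $X^*$ directly, but as the adjoint of an operator on the predual $X$, where the KB-property can be brought to bear. Since each $U_n$ is weak$^*$-continuous, I would write $U_n = S_n^*$ with $S_n \in \cL(X)$; then $\|S_n\| = \|U_n\| \le 1$, and $S_n$ is positive (from $S_n^* x^* \ge 0$ for $x^* \ge 0$ one gets $\langle x^*, S_n x\rangle \ge 0$ for all $x^* \ge 0$, hence $S_n x \ge 0$, whenever $x \ge 0$; recall that in a Banach lattice $y \in X_+$ iff $\langle x^*, y\rangle \ge 0$ for all $x^* \in X^*_+$, the cone $X_+$ being closed). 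Applying the same observation to $(S_{n+1} - S_n)^* = U_{n+1} - U_n \ge 0$ shows that $(S_n)$ is increasing in $\cL(X)$. Hence, for fixed $x \in X_+$, the sequence $(S_n x)_n \subseteq X_+$ is increasing and norm-bounded by $\|x\|$, so it converges in norm by the KB-property of $X$; I set $Sx := \lim_n S_n x$. Additivity of $S$ on $X_+$ passes to the norm-limit from that of the $S_n$, so $S$ extends via $x = x^+ - x^-$ to a linear operator on $X$, which is positive and, since $|Sx| \le S|x|$, a contraction. Then $U := S^*$ lies in $\cL(X^*, \sigma^*)$ and is automatically a positive contraction.

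To obtain the supremum characterization in (a), note that for $x^* \in X^*_+$ and $x \in X_+$,
\[
\langle Ux^*, x\rangle = \langle x^*, Sx\rangle = \lim_n \langle x^*, S_n x\rangle = \lim_n \langle U_n x^*, x\rangle,
\]
and this limit is increasing; hence $Ux^* \ge U_n x^*$ for all $n$, and if $y^* \ge U_n x^*$ for all $n$, then passing to the limit in $\langle y^* - U_n x^*, x\rangle \ge 0$ over $x \in X_+$ gives $y^* \ge Ux^*$. So $Ux^*$ is the least upper bound of $(U_n x^*)_n$, which is exactly the assertion of (a) (and incidentally shows the supremum exists, without a separate appeal to Dedekind completeness of $X^*$).

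For (b), I would first check that $(U_n V_n)_n$ is again an increasing sequence of positive contractions in $\cL(X^*, \sigma^*)$: each $U_n V_n$ is a composition of weak$^*$-continuous operators, and $U_n V_n \le U_n V_{n+1} \le U_{n+1} V_{n+1}$ by positivity of the factors. By (a) there is a positive contraction $W \in \cL(X^*, \sigma^*)$ with $U_n V_n \uparrow W$, and it remains to identify $W = UV$. Fix $x^* \in X^*_+$. One inequality is easy: $V_n x^* \le Vx^*$ and $U_n \le U$ give $U_n V_n x^* \le U_n Vx^* \le UVx^*$, so $Wx^* \le UVx^*$. For the other, fix $m$ and note $U_n V_n x^* \ge U_m V_n x^*$ for $n \ge m$; testing against $x \in X_+$ and letting $n \to \infty$ — the left side increasing to $\langle Wx^*, x\rangle$, the right side converging to $\langle U_m Vx^*, x\rangle$ by weak$^*$-continuity of $U_m$ together with $V_n x^* \to Vx^*$ in $\sigma^*$ (which holds by the proof of (a)) — yields $Wx^* \ge U_m Vx^*$ for every $m$, whence $Wx^* \ge \sup_m U_m (Vx^*) = U(Vx^*) = UVx^*$ by (a) applied to $(U_m)$ at the positive vector $Vx^*$. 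Thus $W = UV$.

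The main obstacle, and the reason for passing to the predual, is the weak$^*$-continuity of $U$: a pointwise weak$^*$-limit of weak$^*$-continuous operators need not be weak$^*$-continuous, so constructing $U$ directly on $X^*$ and then verifying weak$^*$-continuity (via Lemma \ref{lem:seqweakstarcont}, say) would be delicate. Writing $U = S^*$ avoids this entirely; the cost is the norm-convergence of $(S_n x)_n$ on $X_+$, which is precisely what the KB-property delivers. Everything else — the well-definedness and linearity of $S$, and the order bookkeeping in (b) — is routine.
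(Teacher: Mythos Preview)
Your proof is correct. Part (1) follows the paper's argument essentially verbatim: pass to the predual, use the KB-property to get norm-convergence of $(S_n x)$ on $X_+$, extend by linearity, and set $U = S^*$.

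Part (2), however, is organised differently, and your route is arguably more elementary. The paper fixes $m$ and first takes the supremum over $n$ in $U_n V_m x^*$, obtaining $U V_m x^*$; it then needs $\sup_m U V_m x^* = U(\sup_m V_m x^*)$, i.e.\ order continuity of the \emph{limit} operator $U$. To justify this, the paper invokes the structural fact that a KB-space is a band in its bidual, so that elements of $X$ are exactly the order-continuous functionals on $X^*$, whence every adjoint operator on $X^*$ is order continuous. You instead fix $m$ and let $n\to\infty$ in $U_m V_n x^*$, using the weak$^*$-continuity of the \emph{approximant} $U_m$ (which is given by hypothesis) together with the weak$^*$-convergence $V_n x^* \to V x^*$ established in part (1); this yields $W x^* \ge U_m V x^*$, and a second application of (1) to $(U_m)$ at the vector $V x^*$ finishes. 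Your argument thus stays entirely within what has already been proved and avoids the appeal to the band-in-bidual characterisation of KB-spaces; the paper's approach, on the other hand, records the useful fact that $U$ itself is order continuous.
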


\begin{proof}
(1) Pick $S_n \in \cL (X)$ such that $S_n^* = U_n$. For $x \in X_+$ the sequence $S_nx$ is increasing and norm-bounded. Since $X$ is a KB-space,  the limit $\tilde S x \coloneqq S_n x$ exists. Obviously, $\tilde S$ is additive and positively homogeneous on the positive
cone $X_+$. Consequently, it uniquely extends to a positive linear operator $S$ on $X$, cf.\ \cite[Lemma 1.3.3]{meyer1991}.
It follows that in the ordering of $\mathscr{L}(X)$ we have $S= \sup_{n\in\N} S_n$ and hence $U \coloneqq S^* = \sup_{n\in\N} S_n^* = \sup_{n\in\N} U_n$ is 
an adjoint operator. That $U$ is a positive contraction is obvious.

(2) Clearly, $U_nV_n \leq UV$  and hence $\sup_{n\in\N} U_nV_nx^* \leq UVx^*$ for all $x^* \in X^*_+$. On the other hand,
for fixed $m\in \N$ we have
\[
\sup_{n\in\N} U_nV_nx^* \geq \sup_{n\in N} U_nV_mx^* = UV_mx^*
\]
for all $x^*\in X^*_+$. As $X$ is a KB-space, it is a band in its bi-dual $X^{**}$, see \cite[Theorem 2.4.12]{meyer1991}. Thus, by \cite[Proposition 1.4.15]{meyer1991}, the elements of $X$ are precisely the order continuous linear functionals on $X^*$ whence an adjoint operator on $X^*$ is automatically order continuous. Consequently, for $x^* \in X^*_+$ we have
\[
\sup_{m\in \N} UV_mx^* = UVx^*,
\]
so that altogether we also obtain the inequality $\sup_{n\in\N}U_nV_nx^* \geq UVx^*$. 
\end{proof}

We can now prove the following monotone convergence theorem for positive and contractive $*$-semigroups.

\begin{prop}
\label{prop:supremumsemigroup}
	Let $X$ be a separable KB-space and let $T_n=(T_n(t))_{t>0} \subseteq \cL(X^*,\sigma^*)$ denote an increasing sequence of
	positive and contractive $*$-semigroups with Laplace transforms $(R_n(\lambda))_{\Re \lambda>0}$. Then
	$T(t) \coloneqq \sup_{n\in\N}T_n(t)$ defines a positive and contractive $*$-semigroup whose  Laplace transform $(R(\lambda))_{\Re \lambda>0}$ 
	coincides with $\sup_{n\in\N} R_n(\lambda)$ for all real $\lambda>0$.
\end{prop}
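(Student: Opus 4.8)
The plan is to construct $T(t)$ fibre by fibre using Lemma~\ref{l.increase}, verify the axioms of Definition~\ref{d.sg}, and then identify the Laplace transform by the classical monotone convergence theorem.

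\emph{Construction and $*$-semigroup properties.} Fix $t>0$. Since $(T_n(t))_{n\in\N}$ is an increasing sequence of positive contractions in $\cL(X^*,\sigma^*)$ and $X$ is a separable KB-space, Lemma~\ref{l.increase}(1) yields a positive contraction $T(t)\in\cL(X^*,\sigma^*)$ with $T(t)x^*=\sup_{n\in\N}T_n(t)x^*$ for all $x^*\in X^*_+$; this is the operator in the statement, and it is in particular weak$^*$-continuous. For the semigroup law, apply Lemma~\ref{l.increase}(2) to the increasing sequences $(T_n(t))_n$ and $(T_n(s))_n$: one obtains $T_n(t)T_n(s)\uparrow T(t)T(s)$. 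On the other hand $T_n(t)T_n(s)=T_n(t+s)\uparrow T(t+s)$ by the very definition of $T(t+s)$, and since the pointwise supremum on $X^*_+$ is unique, $T(t)T(s)=T(t+s)$ on $X^*_+$; as $X^*$ is a Banach lattice we have $X^*=X^*_+-X^*_+$, so linearity extends the identity to all of $X^*$. For measurability, for $x^*\in X^*_+$ and $x\in X_+$ the map $t\mapsto\applied{T(t)x^*}{x}=\sup_{n\in\N}\applied{T_n(t)x^*}{x}$ is a pointwise supremum of measurable functions, hence measurable, and the general case follows by writing $x^*=x^*_+-x^*_-$, $x=x_+-x_-$ and using bilinearity of the duality together with linearity of $T(t)$. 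Thus $T=(T(t))_{t>0}$ is a positive and contractive $*$-semigroup, and its Laplace transform $(R(\lambda))_{\Re\lambda>0}$ is well defined through \eqref{eqn:Rlambdadef}.

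\emph{Identification of the Laplace transform.} Fix a real $\lambda>0$. From $T_n(t)\le T_{n+1}(t)$ and \eqref{eqn:Rlambdadef} we get $R_n(\lambda)\le R_{n+1}(\lambda)$, and testing against unit vectors of $X$ gives $\|\lambda R_n(\lambda)\|\le 1$; hence $(\lambda R_n(\lambda))_n$ is an increasing sequence of positive contractions, and Lemma~\ref{l.increase}(1) provides a positive operator $S(\lambda)\in\cL(X^*,\sigma^*)$ with $S(\lambda)x^*=\sup_{n\in\N}R_n(\lambda)x^*$ for $x^*\in X^*_+$. For such $x^*$ and $x\in X_+$ one computes $\applied{S(\lambda)x^*}{x}=\sup_{n}\applied{R_n(\lambda)x^*}{x}=\sup_{n}\int_0^\infty e^{-\lambda t}\applied{T_n(t)x^*}{x}\,dt$, and by the monotone convergence theorem applied to the increasing sequence of nonnegative measurable functions $t\mapsto e^{-\lambda t}\applied{T_n(t)x^*}{x}$, this equals $\int_0^\infty e^{-\lambda t}\applied{T(t)x^*}{x}\,dt=\applied{R(\lambda)x^*}{x}$. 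Hence $S(\lambda)=R(\lambda)$ on $X^*_+$, and therefore on all of $X^*$ by linearity, which is the asserted identity $\sup_{n\in\N}R_n(\lambda)=R(\lambda)$.

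\emph{Main obstacle.} There is no substantial analytic difficulty here; the delicate points are bookkeeping. First, the resolvents $R_n(\lambda)$ are only $1/\lambda$-bounded, not contractive, so one must pass to $\lambda R_n(\lambda)$ before invoking Lemma~\ref{l.increase}. Second, the semigroup law and the Laplace-transform identity are first obtained only on the positive cone $X^*_+$ and must then be extended to $X^*$; this extension relies on $X^*$ being a Banach lattice (so that $X^*=X^*_+-X^*_+$) together with the linearity of the operators. Everything else reduces directly to Lemma~\ref{l.increase} and the classical monotone convergence theorem.
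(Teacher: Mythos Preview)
Your proof is correct and follows essentially the same approach as the paper's: construct $T(t)$ via Lemma~\ref{l.increase}(1), establish the semigroup law via Lemma~\ref{l.increase}(2), verify measurability as a supremum of measurable functions, and identify the Laplace transform by scalar monotone convergence. Your extra bookkeeping (explicitly extending from $X^*_+$ to $X^*$, rescaling to $\lambda R_n(\lambda)$ before invoking Lemma~\ref{l.increase}) is careful but not strictly needed---the paper sidesteps the rescaling issue by working directly at the scalar level $\applied{R_n(\lambda)x^*}{x}$ for $x^*\in X^*_+$, $x\in X_+$, where monotone convergence applies without any operator-norm bound.
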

\begin{proof}
	By Lemma \ref{l.increase}(1), $T(t)\coloneqq \sup_{n\in \N} T_n(t)$ defines a positive contraction in $\cL (X^*,\sigma^*)$
	for every $t>0$. By Lemma \ref{l.increase}(2), we find for $t,s>0$ that
	\[ T(t+s) = \sup_{n\in\N} T_n(t+s) = \sup_{n\in\N}T_n(t)T_n(s) = T(t)T(s),\]
	so that $T=(T(t))_{t>0}$ satisfies the semigroup law.
	Since for each $x^* \in X^*_+$ and $x\in X_+$ the function $t\mapsto \applied{T(t)x^*}{x} = \sup_{n\in\N} \applied{T_n(t)x^*}{x}$ is measurable, 
	this shows that $T=(T(t))_{t>0}$ is a $*$-semigroup. Clearly, $T$ is contractive.
	Finally, by monotone convergence
	\[ \sup_{n\in\N} \applied{R_n(\lambda)x^*}{x} = \sup_{n\in\N} \int_0^\infty e^{-\lambda t} \applied{T_n(t)x^*}{x} \, dt = \int_0^\infty e^{-\lambda t}\applied{T(t)x^*}{x} \, dt\]
	for all $\lambda>0$, $x^*\in X^*_+$ and $x\in X_+$. By linearity, this shows that the Laplace transform of $T$ is given by $\sup_{n\in\N} R_n(\lambda)$ for all $\lambda>0$.
\end{proof}

\section{Kernel operators and the strong Feller property}\label{s.sf}

In the previous section we have established  tools that will allow us to prove that a realization $A_\mu$ of our operator $\mathscr{A}$ subject to the nonlocal boundary condition \eqref{eq.bc} generates an injective $*$-semigroup on $L^\infty(\Omega)$ which consists of positive contractions. From the point of view of Markov processes, however, it is more natural to work on the space $B_b(\overline{\Omega})$ of bounded, Borel measurable functions on the set $\overline{\Omega}$. It is particularly important that the involved operators are \emph{kernel operators}, since then we can extract the transition probabilities of the associated stochastic process from these operators. 

In this section, we recall the relevant notions concerning kernel operators. We will also recall the strong Feller property, which is an important tool in studying the asymptotic behavior of transition semigroups of Markov operators. As we will see, the strong Feller property for semigroups also  entails nice continuity properties.

In this section, we set $E\coloneqq \overline{\Omega}$. Note that everything remains valid if $E$ is replaced with a general complete, separable metric space. We denote by $\mathscr{B}(E)$, $B_b(E)$, $C_b(E)$ and $\mathscr{M}(E)$ the Borel $\sigma$-algebra, the space of bounded Borel-measurable functions, the space of bounded continuous functions and the space of singed measures on $E$, respectively.

A \emph{bounded kernel} on $E$ is a map $k: E\times \mathscr{B}(E) \to \C$ such that
\begin{enumerate}
[(i)]
\item the map $x\mapsto k(x,A)$ is Borel-measurable for every $A\in \mathscr{B}(E)$;
\item the map $A\mapsto k(x,A)$ is a complex measure for every $x \in E$ and
\item  $\sup_{x\in E} |k|(x,E) < \infty$, where $|k|(x,\cdot)$ denotes the total variation of the measure
$k(x,\cdot)$.
\end{enumerate}
An operator $K \in \mathscr{L}(B_b(E))$ is called \emph{kernel operator} if there exists a kernel $k$ such that 
\begin{equation}\label{eq.kernelop}
(Kf)(x) = \int_E f(y)\, k(x,dy)
\end{equation}
for every $f\in B_b(E)$ and $x\in E$. As there is at most one kernel $k$ satisfying \eqref{eq.kernelop}, we call $k$ the \emph{kernel associated with $K$} and, conversely, $K$ the \emph{operator associated with k}. Likewise, we can associate an operator $K' \in \mathscr{L}(\mathscr{M}(E))$ with $k$ by setting
\[
K'\nu (A) \coloneqq \int_E k(x, A)\, d\nu(x)
\]
for $A\in \mathscr{B}(E)$. As it turns out, a bounded linear operator $K$ on $B_b(E)$ is a kernel operator if and only if
the norm adjoint $K^*$: $B_b(E)^* \to B_b(E)^*$, defined by $K^*\varphi \coloneqq \varphi\circ K$ for any norm continuous linear functional $\varphi: B_b(E) \to \mathds{C}$, leaves the space $\mathscr{M}(E)$ invariant. In this case we have $K^*|_{\mathscr{M}(E)} = K'$. 
For us, the following characterization is more important. 

\begin{lem}\label{l.kernelop}
Let $K\in \mathscr{L}(B_b(E))$. Then $K$ is a kernel operator if and only if it is \emph{pointwise continuous}, i.e.\
if $(f_n)\subset B_b(E)$ is a bounded sequence converging pointwise to $f\in B_b(E)$, then $Kf_n$ converges pointwise to $Kf$.
\end{lem}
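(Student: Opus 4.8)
The plan is to prove both implications separately. For the easy direction, suppose $K$ is a kernel operator with associated kernel $k$, and let $(f_n) \subset B_b(E)$ be a bounded sequence converging pointwise to $f$. Fix $x \in E$. Since $|k|(x, \cdot)$ is a finite measure, the sequence $(f_n)$ is dominated by a constant (hence $|k|(x,\cdot)$-integrable) function, so the dominated convergence theorem gives
\[
(Kf_n)(x) = \int_E f_n(y)\, k(x,dy) \longrightarrow \int_E f(y)\, k(x,dy) = (Kf)(x).
\]
As $x$ was arbitrary, $Kf_n \to Kf$ pointwise, so $K$ is pointwise continuous.

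For the converse, assume $K \in \mathscr{L}(B_b(E))$ is pointwise continuous; I must produce a kernel $k$ representing $K$. The natural candidate is $k(x,A) \coloneqq (K\mathds{1}_A)(x)$ for $x \in E$, $A \in \mathscr{B}(E)$. Property (i) is immediate since $K\mathds{1}_A \in B_b(E)$ is Borel measurable, and the total-variation bound (iii) will follow once we know $k(x,\cdot)$ is a measure, with $|k|(x,E) \le \|K\|$. The crux is property (ii): that $A \mapsto (K\mathds{1}_A)(x)$ is countably additive. Finite additivity is clear from linearity of $K$ and $\mathds{1}_{A \cup B} = \mathds{1}_A + \mathds{1}_B$ for disjoint $A, B$. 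For countable additivity, given pairwise disjoint $(A_j)_{j \in \N}$ with union $A$, the partial sums $\sum_{j=1}^n \mathds{1}_{A_j} = \mathds{1}_{\bigcup_{j\le n} A_j}$ form a bounded sequence (bounded by $\mathds{1}$) converging pointwise to $\mathds{1}_A$; pointwise continuity of $K$ then yields $\sum_{j=1}^n (K\mathds{1}_{A_j})(x) \to (K\mathds{1}_A)(x)$, i.e. $k(x,A) = \sum_{j} k(x,A_j)$. Thus $k(x,\cdot)$ is a (complex, finite) signed measure, and $k$ is a bounded kernel.

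It remains to check that the operator $\widetilde{K}$ associated with $k$ via \eqref{eq.kernelop} coincides with $K$. By construction $\widetilde{K}\mathds{1}_A = K\mathds{1}_A$ for every $A \in \mathscr{B}(E)$, hence by linearity $\widetilde{K}f = Kf$ for every simple function $f$. For general $f \in B_b(E)$, choose a uniformly bounded sequence of simple functions $f_n \to f$ pointwise (standard); then $\widetilde{K}f_n \to \widetilde{K}f$ pointwise by the easy direction already proved, while $Kf_n \to Kf$ pointwise by the hypothesized pointwise continuity of $K$. Since $\widetilde{K}f_n = Kf_n$ for all $n$, we conclude $\widetilde{K}f = Kf$, so $K = \widetilde{K}$ is a kernel operator. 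The main obstacle is the countable additivity step, but as indicated it is precisely where the pointwise continuity hypothesis is used, so the argument is short once that observation is made.
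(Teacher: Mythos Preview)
Your proof is correct and follows essentially the same approach as the paper: define $k(x,A)=(K\one_A)(x)$, verify it is a kernel (with countable additivity coming precisely from pointwise continuity), and then extend agreement from simple functions to all of $B_b(E)$; the converse via dominated convergence is identical. The only minor variation is that the paper passes from simple functions to general $f$ using the norm density of simple functions in $B_b(E)$ (both $K$ and the kernel operator being bounded), whereas you use bounded pointwise approximation together with the pointwise continuity of both operators---either route works.
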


\begin{proof}
If $K$ is pointwise continuous, setting $k(x,A) \coloneqq (K\one_A)(x)$, we see that $k$ is a kernel. By linearity and the density of simple functions in $B_b(E)$ it follows that $k$ is associated with $K$. The converse follows from dominated convergence.
\end{proof}

\begin{defn}
An operator $K \in \mathscr{L}(B_b(E))$ is called \emph{strong Feller operator} if it is a kernel operator and $Kf\in C_b(E)$ for all $f\in B_b(E)$.
\end{defn}

Let us now assume that $(E, \mathscr{B}(E))$ is additionally endowed with a measure $m$ with full support, i.e.\ for every $x\in E$ and $r>0$ we have $m(B_r(x))>0$. This is certainly the case in our intended application, where $E=\overline{\Omega}$ and $m$ is Lebesgue measure on $\Omega$. If $m$ has full support, then two continuous functions which are equal almost everywhere, are equal everywhere. In particular, an element of $L^\infty(E, m)$ may have at most one continuous representative. Suppose now that $\tilde K \in \cL (L^\infty(E, m))$ is such that for every $f\in L^\infty(E, m)$ the image $\tilde K f$ has a continuous representative. In this case, we will say that $\tilde K$ \emph{takes values in $C_b(E)$}. In view of the closed graph theorem, we may consider $\tilde K$ as a bounded operator from $L^\infty (E, m)$ to $C_b(E)$ in that case. 
Let us consider the canonical injection $\iota: B_b(E) \to L^\infty (E, m)$ which maps a bounded, measurable function to its equivalence class modulo equality almost everywhere. If $ \tilde{K}\in \mathscr{L}(L^\infty(E, m))$ takes values in $C_b(E)$, then  $ K\coloneqq  \tilde{K} \circ \iota$ defines a map from  $B_b(E)$ to $C_b(E)$. It is a natural question whether $\tilde K$ is a strong Feller operator. Unfortunately, this is not true without further assumptions, as $\tilde K\circ \iota$ may fail to be a kernel operator, cf.\ \cite[Example 5.4]{akk16}

However, making use of Lemma \ref{l.kernelop}, we easily obtain the following characterization.

\begin{lem}\label{l.kernellinfty}
Let $\tilde K \in \mathscr{L}(L^\infty (E,m))$ take values in $C_b(E)$ and $\iota: B_b(E) \to L^\infty(E, m)$ be as above. Then $\tilde K\circ \iota$ is a kernel operator if and only if for every bounded sequence $(f_n) \subset L^\infty(E,m)$ that converges almost everywhere to $f$ we have $\tilde K f_n(x) \to \tilde K f(x)$ for all $x\in E$.
\end{lem}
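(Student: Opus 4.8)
The plan is to prove both implications of the equivalence in Lemma~\ref{l.kernellinfty} by translating between the ``almost everywhere'' convergence in $L^\infty(E,m)$ and the ``pointwise'' convergence in $B_b(E)$ that governs the kernel-operator criterion of Lemma~\ref{l.kernelop}, applied to the composition $K\coloneqq \tilde K\circ\iota\in\mathscr{L}(B_b(E),C_b(E))\subset\mathscr{L}(B_b(E))$. The subtlety — and the reason the statement is not completely trivial — is that $\iota$ is \emph{not} injective, so a pointwise convergent sequence in $B_b(E)$ need only converge almost everywhere after passing through $\iota$, and conversely an almost everywhere convergent sequence in $L^\infty(E,m)$ has many Borel representatives, none of which need converge everywhere.

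First I would prove the ``only if'' direction. Assume $K=\tilde K\circ\iota$ is a kernel operator, and let $(f_n)\subset L^\infty(E,m)$ be bounded with $f_n\to f$ $m$-almost everywhere. The point is that $\tilde K f_n$ depends only on the equivalence class of $f_n$, so we are free to choose convenient Borel representatives. Concretely, pick any Borel representatives $g_n\in B_b(E)$ of $f_n$ and $g\in B_b(E)$ of $f$, with a uniform bound $\sup_n\|g_n\|_\infty<\infty$; the set $N\coloneqq\{x: g_n(x)\not\to g(x)\}$ is Borel with $m(N)=0$. Replacing $g_n$ by $g_n\mathbbm{1}_{E\setminus N}+g\mathbbm{1}_N$ (still a Borel representative of $f_n$, since $m(N)=0$) we obtain a bounded sequence in $B_b(E)$ converging \emph{pointwise everywhere} to $g$. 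By Lemma~\ref{l.kernelop}, $Kg_n\to Kg$ pointwise, i.e.\ $\tilde K f_n(x)=\tilde K\iota g_n(x)\to \tilde K\iota g(x)=\tilde K f(x)$ for every $x\in E$, which is exactly the assertion.

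For the ``if'' direction, assume the stated a.e.-to-pointwise convergence property for $\tilde K$, and let $(h_n)\subset B_b(E)$ be bounded with $h_n\to h\in B_b(E)$ \emph{pointwise everywhere}. Then in particular $\iota h_n\to\iota h$ $m$-almost everywhere and $(\iota h_n)$ is bounded in $L^\infty(E,m)$, so by hypothesis $\tilde K(\iota h_n)(x)\to\tilde K(\iota h)(x)$ for every $x\in E$, i.e.\ $Kh_n\to Kh$ pointwise. Thus $K$ is pointwise continuous, hence a kernel operator by Lemma~\ref{l.kernelop}. I do not expect any serious obstacle here; the only thing requiring a line of care is the representative-surgery argument in the first direction, where one must check that modifying $g_n$ on the null set $N$ does not change its $L^\infty$-class (immediate since $m(N)=0$) and that the uniform bound is preserved (immediate since $\|g\|_\infty$ is finite and we only paste in values of $g$). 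One should also note at the outset that $K$ is a well-defined bounded operator $B_b(E)\to B_b(E)$: this is because $\tilde K$ takes values in $C_b(E)\subset B_b(E)$ and $\iota$ is bounded, as recalled just before the lemma via the closed graph theorem.
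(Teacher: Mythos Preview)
Your proposal is correct and follows exactly the route the paper indicates: the paper does not spell out a proof but simply notes that the result is obtained ``making use of Lemma~\ref{l.kernelop}'', and your argument via choosing and then modifying Borel representatives on the exceptional null set is precisely how one fills in those details. The representative-surgery step is the only nontrivial point, and you handle it cleanly (including the Borel measurability of $N$ and the preservation of the uniform bound).
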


Slightly abusing notation, we define the strong Feller property also for operators on $L^\infty(E,m)$.

\begin{defn}
\label{def.sf}
An operator $\tilde K\in \mathscr{L}(L^\infty(E,m))$ is called \emph{strong Feller operator} if
\begin{enumerate}
[(i)]
\item $\tilde K$ takes values in $C_b(E)$;
\item For every bounded sequence $(f_n)\subset L^\infty(E,m)$ converging pointwise almost everywhere to $f$, we have
$\tilde Kf_n \to \tilde Kf$ pointwise.
\end{enumerate}
\end{defn}

In what follows we will not distinguish between strong Feller operators $\tilde K$ on $L^\infty(E,m)$ and the strong Feller operators $K\coloneqq \tilde K\circ \iota$ on $B_b(E)$. In particular, given a strong Feller operator $\tilde K$ on $L^\infty(E, m)$, we can consider the operator $\tilde K ' \in \mathscr{M}(E)$ (which, of course, should be identified with $(\tilde K \circ \iota)'$). 

As it turns out, a strong Feller operator in the sense of Definition \ref{def.sf} is always an adjoint operator. This we prove next.

\begin{lem}\label{l.amadjoint}
Let $K \in \mathscr{L}(L^\infty(E,m))$ be a strong Feller operator. Then $K$ is an adjoint operator.
\end{lem}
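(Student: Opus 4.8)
The goal is to show that a strong Feller operator $K$ on $L^\infty(E,m)$ is an adjoint operator. Since $L^\infty(E,m)$ is the dual of $L^1(E,m)$, being an adjoint operator is equivalent — by the duality characterization recalled at the start of Section~\ref{s.semigroup} — to being weak$^*$-continuous, and by Lemma~\ref{lem:seqweakstarcont} it even suffices to check sequential weak$^*$-continuity.

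So the plan is: take a sequence $f_n \to f$ in $\sigma(L^\infty,L^1)$; I must show $Kf_n \to Kf$ weak$^*$, i.e. $\int_E (Kf_n)\, g\, dm \to \int_E (Kf)\, g\, dm$ for every $g\in L^1(E,m)$. A weak$^*$-convergent sequence is norm-bounded (uniform boundedness), so $(f_n)$ is a bounded sequence in $L^\infty$.

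Here is the key difficulty and how to get around it: weak$^*$-convergence in $L^\infty$ does \emph{not} imply pointwise a.e.\ convergence, so I cannot directly invoke property (ii) of Definition~\ref{def.sf}. The standard remedy is a subsequence-and-convexity argument. Suppose, for contradiction, that $\int (Kf_n) g\, dm \not\to \int (Kf) g\, dm$ for some $g\in L^1$; pass to a subsequence along which $|\int (Kf_n)g\,dm - \int (Kf)g\,dm| \ge \delta > 0$. The subsequence still converges weak$^*$ to $f$. By Mazur's theorem, suitable finite convex combinations $h_k = \sum_j \lambda_j^{(k)} f_{n_j}$ (with the $n_j$ from the subsequence, indices growing with $k$) converge to $f$ in $L^1$-norm, hence, after passing to a further subsequence, $m$-a.e. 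The convex combinations $h_k$ are uniformly bounded, so by property (ii) of Definition~\ref{def.sf}, $Kh_k \to Kf$ pointwise; since $Kh_k$ and $Kf$ are uniformly bounded and $g\in L^1$, dominated convergence gives $\int (Kh_k) g\, dm \to \int (Kf) g\, dm$. But by linearity $\int (Kh_k)g\,dm = \sum_j \lambda_j^{(k)} \int (Kf_{n_j})g\,dm$ is a convex combination of numbers each within — well, here one has to be careful: we only know $|\int(Kf_{n_j})g\,dm - \int(Kf)g\,dm|\ge\delta$, which does not by itself prevent convex combinations from converging. The cleaner route: do the contradiction argument with a real-linear functional. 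Write $c_n \coloneqq \int (Kf_n) g\, dm$ and $c\coloneqq \int (Kf)g\,dm$. If $c_n \not\to c$, choose a subsequence with $\mathrm{Re}\, c_n \ge \mathrm{Re}\, c + \delta$ for all $n$ (or $\le \mathrm{Re}\, c - \delta$, or the imaginary analogues — one of the four cases occurs along a subsequence). Then every convex combination $\sum \lambda_j c_{n_j}$ still satisfies the same inequality, contradicting $\int (Kh_k)g\,dm \to c$. This closes the argument.

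Thus the structure is: (1) reduce to sequential weak$^*$-continuity via Lemma~\ref{lem:seqweakstarcont}; (2) reduce to showing $\int(Kf_n)g\,dm\to\int(Kf)g\,dm$; (3) run a contradiction argument, extracting a subsequence on which a real- or imaginary-part inequality with a gap $\delta$ persists; (4) apply Mazur to get norm- and then a.e.-convergent convex combinations $h_k$; (5) apply the pointwise continuity in Definition~\ref{def.sf}(ii) together with dominated convergence to get $\int(Kh_k)g\,dm\to \int(Kf)g\,dm$; (6) observe the convex combinations inherit the gap inequality, the desired contradiction. The main obstacle is precisely the point that weak$^*$-convergence is much weaker than a.e.-convergence, and the resolution is the Mazur/convexity trick; everything else (uniform boundedness, dominated convergence, linearity) is routine. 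Note we never need property (i) of Definition~\ref{def.sf} beyond knowing $Kf$ makes sense as an element of $L^\infty$; it is property (ii) that does the work.
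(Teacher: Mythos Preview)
There is a genuine gap in your argument at the Mazur step. Mazur's theorem turns \emph{weak} convergence into norm convergence of convex combinations; you only have \emph{weak$^*$} convergence in $L^\infty$. When $m(E)=\infty$ --- which is precisely the setting of the paper, where $E=\overline{\Omega}$ with Lebesgue measure on a typically unbounded $\Omega$ --- the functions $f_n$ need not lie in $L^1$ at all, so the phrase ``converge to $f$ in $L^1$-norm'' is not even meaningful. Concretely, take $E=\R$ and $f_n=\one_{[n,n+1]}$: then $f_n\to 0$ weak$^*$ in $L^\infty$, but every convex combination $\sum_j\lambda_j f_{n_j}$ has $L^1$-norm equal to $1$. (Your argument would go through if $m(E)<\infty$, since then $L^\infty\subset L^1$ and weak$^*$-convergence in $L^\infty$ implies weak convergence in $L^1$; but this is not the relevant case here.) One could attempt a repair via localization to finite-measure subsets combined with Koml\'os' theorem and a diagonal argument, but this requires substantially more care than what you have written, in particular to ensure that the a.e.\ limit coincides with $f$.

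The paper's proof avoids this difficulty entirely by first establishing a representation: for each point $x\in E$, the functional $f\mapsto Kf(x)$ is shown to be given by integration against some $g_x\in L^1(E,m)$. This is done by observing that $A\mapsto K\one_A(x)$ is $\sigma$-additive (a direct application of property~(ii) to indicator functions of disjoint unions) and absolutely continuous with respect to $m$ (since $m(A)=0$ forces $\one_A=0$ in $L^\infty$, hence $K\one_A=0$), and then invoking Radon--Nikodym. Once $Kf(x)=\langle f,g_x\rangle$ is known, weak$^*$-convergence $f_n\rightharpoonup^* f$ immediately gives pointwise convergence $Kf_n(x)\to Kf(x)$, and dominated convergence finishes. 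This route uses property~(ii) only for indicators, and sidesteps the weak$^*$-versus-a.e.\ issue altogether.
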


\begin{proof}
In view of Lemma \ref{lem:seqweakstarcont} it suffices to prove that $K$ is sequentially weak$^*$-continuous.
Let us fix $x\in E$ and put $\varphi_x(f) \coloneqq Kf(x)$ for $f\in L^\infty(E, m)$. It follows from the closed graph theorem that
$\varphi_x \in L^\infty(E, m)^*$. We now make use of the continuity condition (ii) from Definition \ref{def.sf} to prove that
$\varphi_x(f) = \langle f, g_x\rangle$ for some $g_x \in L^1(E, m)$. To that end, let $(A_n)$ be a sequence of pairwise disjoint Borel subsets of $U$. Then
\[
f_n \coloneqq \one_{\bigcup_{k=1}^n A_k} \uparrow f\coloneqq \one_{\bigcup_{k=1}^\infty A_k}
\]
almost everywhere, whence the  continuity property (ii) implies that $\nu_x (A) \coloneqq \varphi_x(\one_A)$ defines a $\sigma$-additive measure on $\Omega$. If $m(A)=0$, then $\one_A=0$ almost everywhere, whence $K\one_A\equiv 0$.
Thus $\nu_x$ is absolutely continuous with respect to $m$. By the Radon--Nikodym theorem, $\nu_x$ has a density $g_x \in L^1(E, m)$.

Now let a sequence $(f_n) \subset L^\infty(E,m)$ be given with $f_n \weak^* f$. By the above, 
\[
Kf_n(x) = \langle f_n , g_x\rangle \to \langle f, g_x\rangle = Kf(x)\]
for all $x \in E$. In view of the uniform boundedness principle the sequence $(f_n)$ is uniformly bounded
and hence the sequence $Kf_n$ is bounded. It now follows from the dominated convergence theorem that
$Kf_n \weak^* Kf$ as $n\to \infty$. This finishes the proof.
\end{proof}

The importance of the strong Feller property in the study of asymptotic behavior and continuity properties of transition semigroups stems from the following fact.

\begin{lem}\label{l.ultra}
Let $K,L$ be positive strong Feller operators. Then the product $K\cdot L$ is \emph{ultra Feller}, i.e.\ it maps bounded subsets of $B_b(E)$ to equicontinuous subsets of $C_b(E)$.
\end{lem}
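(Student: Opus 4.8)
The plan is to prove that the product $K\cdot L$ of two positive strong Feller operators is ultra Feller by exploiting the representation of strong Feller operators as kernel operators that also map into $C_b(E)$, together with the closed graph theorem and a compactness argument. Recall from Lemma \ref{l.amadjoint} that each of $K$ and $L$ is an adjoint operator, so we may write $K = \widehat K'$ and $L = \widehat L'$ for the transposes of bounded operators on $\mathscr{M}(E)$; equivalently, $K$ and $L$ are associated with bounded kernels $k(\cdot,\cdot)$ and $\ell(\cdot,\cdot)$ via \eqref{eq.kernelop}, and the additional strong Feller property says $Kf, Lf \in C_b(E)$ for all $f\in B_b(E)$.

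First I would reduce the problem to a statement about the kernel of $L$. Since $L$ takes values in $C_b(E)$, the map $x\mapsto \ell(x,\cdot)\in\mathscr{M}(E)$ enjoys the property that $x\mapsto \int_E f\,d\ell(x,\cdot) = (Lf)(x)$ is continuous for every $f\in B_b(E)$; in particular it is weakly continuous in the sense of the duality with $C_b(E)$. The decisive extra information — and this is where positivity enters — is that $\ell(x,\cdot)$ is a positive measure with $\ell(x,E) = (L\one)(x)$ uniformly bounded, so that $\{\ell(x,\cdot) : x\in E\}$ is a norm-bounded family of positive measures for which testing against $C_b(E)$ already controls the weak topology. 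The heart of the matter is to upgrade the pointwise continuity of $x\mapsto\ell(x,\cdot)$ to \emph{continuity in total variation norm}, i.e.\ $\lVert \ell(x_n,\cdot) - \ell(x,\cdot)\rVert_{TV}\to 0$ whenever $x_n\to x$ in $E$. Once this is established, for any $f\in B_b(E)$ with $\lVert f\rVert_\infty\le 1$ we get
\[
\abs{(Lf)(x_n) - (Lf)(x)} = \abs[\Big]{\int_E f\,d(\ell(x_n,\cdot) - \ell(x,\cdot))} \le \lVert \ell(x_n,\cdot) - \ell(x,\cdot)\rVert_{TV},
\]
with a bound independent of $f$, which is exactly the equicontinuity (at the point $x$) of the family $\{Lf : \lVert f\rVert_\infty\le 1\}$. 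Then, since $K$ maps into $C_b(E)$ and is bounded, applying $K$ to the equicontinuous, uniformly bounded family $\{Lf\}$ and using that $K$ is itself a kernel operator associated with a kernel whose measures have uniformly bounded total variation, one concludes that $\{KLf : \lVert f\rVert_\infty\le 1\}$ is equicontinuous on all of $E$ — giving the ultra Feller property.

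For the norm-continuity of $x\mapsto\ell(x,\cdot)$, the key step is a classical fact about weakly convergent sequences of positive measures on a Polish space: if $\nu_n$ are positive measures with $\sup_n\nu_n(E)<\infty$, if $\nu_n\to\nu$ weakly against $C_b(E)$ \emph{and} $\nu_n(E)\to\nu(E)$, then in fact $\nu_n\to\nu$ setwise and hence, by the Vitali–Hahn–Saks theorem (the sequence $(\nu_n)$ being uniformly countably additive), in total variation. In our situation, $\nu_n := \ell(x_n,\cdot)$ and $\nu := \ell(x,\cdot)$, the weak convergence holds because $(Lf)(x_n)\to (Lf)(x)$ for $f\in C_b(E)$, and the mass convergence $\nu_n(E) = (L\one)(x_n)\to (L\one)(x) = \nu(E)$ holds because $\one\in C_b(E)$ and $L\one$ is continuous. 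Here positivity is essential: it is what lets us pass from weak convergence plus mass convergence to setwise (and then TV) convergence. I expect this measure-theoretic upgrade — verifying the uniform countable additivity and invoking Vitali–Hahn–Saks correctly in the non-metrizable weak topology on $\mathscr{M}(E)$ — to be the main technical obstacle; the rest is a routine two-step estimate chaining the equicontinuity through the second operator $K$.
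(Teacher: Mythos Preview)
There is a genuine gap: your central measure-theoretic claim is false. You assert that for positive finite measures, weak convergence against $C_b(E)$ together with convergence of total masses yields setwise convergence, and that setwise convergence plus Vitali--Hahn--Saks then gives convergence in total variation. The first implication already fails (consider $\delta_{1/n}\to\delta_0$ on $[0,1]$), though in your situation setwise convergence $\ell(x_n,\cdot)\to\ell(x,\cdot)$ does hold for a different reason, namely by testing against indicators and using that $L\one_A$ is continuous. The fatal error is the second implication. On $E=[0,1]$ take $\nu_n(dt)=(1+\sin(2\pi nt))\,dt$ and $\nu(dt)=dt$: by Riemann--Lebesgue $\nu_n\to\nu$ setwise, so Vitali--Hahn--Saks indeed yields uniform countable additivity, yet $\|\nu_n-\nu\|_{TV}=\int_0^1|\sin(2\pi nt)|\,dt=2/\pi$ for every $n$. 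One can even realise this obstruction as the kernel of a positive strong Feller operator: set $\ell(x,dt)=(1+\sin(2\pi t/x))\,\one_{[0,1]}(t)\,dt$ for $x\in(0,1]$ and $\ell(0,dt)=dt$. Then $Lf(x)=\int_0^1 f(t)\,\ell(x,dt)$ is continuous in $x$ for every $f\in B_b[0,1]$ (dominated convergence for $x_0>0$, Riemann--Lebesgue at $x_0=0$), so $L$ is positive and strong Feller, but $x\mapsto\ell(x,\cdot)$ is not TV-continuous at $0$.

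Observe also what your argument would prove if it worked: TV-continuity of $x\mapsto\ell(x,\cdot)$ makes $\{Lf:\|f\|_\infty\le 1\}$ equicontinuous, i.e.\ $L$ \emph{alone} would be ultra Feller, and your subsequent composition with $K$ would be superfluous. The example above shows that a single positive strong Feller operator need not be ultra Feller, so any correct proof must genuinely use both factors. The classical argument (Revuz, \S1.5, which is what the paper cites) does exactly this: for a fixed sequence $x_n\to x$ one uses the strong Feller property of $K$ to get setwise convergence $k(x_n,\cdot)\to k(x,\cdot)$ and a common dominating finite measure $m$; the strong Feller property of $L$ is then used to factor $L$ through an $L^\infty$-space over a suitable finite measure, where weak$^*$ sequential compactness of the unit ball provides the missing compactness needed to pass from setwise to uniform estimates on $\{KLf:\|f\|_\infty\le 1\}$. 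The two operators play different, non-interchangeable roles.
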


\begin{proof}
See \S 1.5 of \cite{revuz}.
\end{proof}

Thus, if $K$ and $L$ are positive strong Feller operators, then it follows from the Arzel\`a--Ascoli theorem that given a bounded sequence $(f_n)$, we can extract a subsequence $(f_{n_k})$ such that $KLf_{n_k}$ converges locally uniformly, i.e.\ with respect to the compact-open topology. In our setting, it is more beneficial to interpret this convergence with respect to another topology.

The \emph{strict topology} $\beta_0$ is defined as follows. We let $\mathscr{F}_0$ be the space of all functions $\varphi$ on $E$ that vanish at infinity, i.e.\ given $\eps >0$ we find a compact set $K\subset E$ such that $|\varphi (x)|\leq \eps$ for all $x\in E\setminus K$. The locally convex topology $\beta_0$ on $C_b(E)$ is defined by the family $\{q_\varphi : \varphi \in \mathscr{F}_0\}$ of seminorms $q_\varphi: f\mapsto \|\varphi f\|_\infty$. 

On norm bounded subsets of $C_b(E)$, $\beta_0$ coincides with the compact-open topology (see \cite[Theorem 2.10.4]{jarchow}); thus if $K$ and $L$ are positive strong Feller operators and $(f_n)$ is a bounded sequence, then $KLf_n$ has a subsequence which converges with respect to $\beta_0$. The main advantage of considering the topology $\beta_0$ instead of the compact open topology is that $\beta_0$ is \emph{consistent with the duality} between $C_b(E)$
and $\mathscr{M}(E)$, i.e.\ we have $(C_b(E), \beta_0)' = \mathscr{M}(E)$. As it turns out, $\beta_0$ is even the Mackey topology of the pair $(C_b(E), \mathscr{M}(E))$, see \cite[Theorems 4.5 and 5.8]{sentilles}, i.e.\ the finest locally convex topology on $C_b(E)$ consistent with the duality. From this one can infer that an operator on $C_b(E)$ is $\beta_0$-continuous if and only if it is $\sigma (C_b(E), \mathscr{M}(E))$-continuous. It is not difficult to see that the latter is the case if and only if the operator is associated with a bounded kernel, see \cite[Proposition 3.5]{kunze2011}
\smallskip

We now obtain the following result about continuity properties of strong Feller semigroups.

\begin{thm}\label{t.sfcont}
Let $(T(t))_{t>0} \subset \mathscr{L}(L^\infty(E, m), \sigma^*)$ be an injective and contractive $*$-semigroup with generator $A$. Assume furthermore that every operator $T(t)$ is a positive 
strong Feller operator. Then the following hold true.
\begin{enumerate}
[(a)]
\item If $(t_n) \subset (0,\infty)$ converges to $t>0$ and $f\in L^\infty(E)$, then $T(t_n)f \to T(t)f$ locally uniformly.
\item For every $f\in L^\infty(E, m)$ the map $(0, \infty)\times E \ni (t,x)\mapsto (T(t)f)(x)$ is continuous.
\end{enumerate}
\end{thm}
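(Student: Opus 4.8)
\textbf{Plan for the proof of Theorem \ref{t.sfcont}.}

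The plan is to exploit the semigroup law to split each $T(t_n)$ as a product of two strong Feller operators and then invoke the ultra Feller property (Lemma \ref{l.ultra}) together with the norm-continuity of orbits starting in $\overline{D(A)}$ (Corollary \ref{c.strongcont}). For part (a), fix $t>0$ and a sequence $t_n\to t$; without loss of generality assume $t_n \geq t/2$ for all $n$. Write $T(t_n) = T(t/4)T(t/4)T(t_n - t/2)$. By Lemma \ref{l.ultra}, the operator $T(t/4)T(t/4)$ is ultra Feller, so the set $\{T(t/4)T(t/4)g : \norm{g}_\infty \le M\}$ is equicontinuous in $C_b(E)$ for every $M$. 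Apply this with $g = T(t_n - t/2)f$, which is uniformly bounded (by $\norm{f}_\infty$, since the semigroup is contractive), and $g_0 = T(t/2)f$; thus the functions $T(t_n)f = [T(t/4)T(t/4)]\,T(t_n-t/2)f$ and $T(t)f$ all lie in a fixed equicontinuous, uniformly bounded family. To get pointwise (hence, by equicontinuity, locally uniform) convergence it therefore suffices to show $T(t_n)f(x) \to T(t)f(x)$ for each fixed $x$. Since $T(t/2)f \in \rg R(\lambda) = D(A) \subset \overline{D(A)}$, Corollary \ref{c.strongcont} gives $\norm{T(t_n-t/2)\,[T(t/2)f] - T(t/2)\,[T(t/2)f]} \to 0$, i.e. $\norm{T(t_n)f - T(t)f}_{L^\infty} \to 0$; evaluating the continuous representatives at $x$ and using that $T(t/4)T(t/4)$ is bounded from $L^\infty(E,m)$ to $C_b(E)$ (by the closed graph theorem, as in the discussion after Lemma \ref{l.kernellinfty}) yields $T(t_n)f(x)\to T(t)f(x)$. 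Combined with the equicontinuity this upgrades to locally uniform convergence, proving (a).

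For part (b), I would show joint continuity by a standard two-variable estimate. Fix $(t_0,x_0) \in (0,\infty)\times E$ and $f\in L^\infty(E,m)$. For $(t,x)$ near $(t_0,x_0)$ write
\[
\abs{T(t)f(x) - T(t_0)f(x_0)} \leq \abs{T(t)f(x) - T(t_0)f(x)} + \abs{T(t_0)f(x) - T(t_0)f(x_0)}.
\]
The second term tends to $0$ as $x\to x_0$ because $T(t_0)f \in C_b(E)$ (part (i) of the strong Feller property). For the first term, restrict $t$ to an interval $[t_0/2, 2t_0]$ and use the equicontinuity established in (a): the family $\{T(t)f : t\in[t_0/2,2t_0]\}$ is equicontinuous and, by (a), $T(t)f \to T(t_0)f$ locally uniformly as $t\to t_0$, so in particular $\sup_{x\in L}\abs{T(t)f(x)-T(t_0)f(x)}\to 0$ on every compact neighbourhood $L$ of $x_0$. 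Hence both terms are small, giving joint continuity.

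\textbf{Main obstacle.} The delicate point is the transition from the abstract $L^\infty$-statement of the semigroup (where $T(t)f$ is an equivalence class) to a genuine pointwise statement on $E$: one must consistently work with the canonical continuous representatives, and check that the evaluation functionals $f\mapsto T(t)f(x)$ behave well under the limits. This is handled by the remarks following Lemma \ref{l.kernellinfty} and Lemma \ref{l.amadjoint} — each $T(t)$ maps $L^\infty(E,m)$ boundedly into $C_b(E)$, so evaluation at a point is a bounded functional — but it has to be invoked carefully so that the factorization $T(t_n) = [T(t/4)T(t/4)]T(t_n-t/2)$ is read as a composition of maps $L^\infty(E,m) \to L^\infty(E,m) \to C_b(E)$, with the last arrow bounded. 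Once that bookkeeping is in place, the proof is just Arzelà--Ascoli (via Lemma \ref{l.ultra}) plus Corollary \ref{c.strongcont}.
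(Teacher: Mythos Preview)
Your argument for part (a) contains a genuine gap. You assert that $T(t/2)f \in \rg R(\lambda) = D(A)$, but this is not justified and is in general false for $*$-semigroups: unlike analytic semigroups, a $*$-semigroup need not smooth into the domain of its generator, and the strong Feller property only yields spatial regularity ($T(t)f \in C_b(E)$), not membership in $D(A)$ or even in $\overline{D(A)}$. Without this, Corollary~\ref{c.strongcont} cannot be invoked for the element $T(t/2)f$, and your route to the norm convergence $\norm{T(t_n)f - T(t)f}_{L^\infty}\to 0$ breaks down. Note also that if your argument worked it would give convergence in the $L^\infty$-norm, hence \emph{uniform} convergence of the continuous representatives --- strictly stronger than the locally uniform convergence the theorem claims; this is already a warning sign.

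The paper's proof circumvents this by identifying the limit indirectly through the resolvent. After using the ultra Feller property to extract a subsequence $T(t_{n_k})f$ converging with respect to $\beta_0$ (equivalently, locally uniformly on bounded sets) to some $g\in C_b(E)$, one applies $R(\lambda)$: since $R(\lambda)$ is weak$^*$-continuous one has $R(\lambda)T(t_{n_k})f \to R(\lambda)g$, while $R(\lambda)T(t_{n_k})f = T(t_{n_k})R(\lambda)f$ and Corollary~\ref{c.strongcont} \emph{does} apply to $R(\lambda)f\in D(A)$, giving $T(t_{n_k})R(\lambda)f \to T(t)R(\lambda)f = R(\lambda)T(t)f$. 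Injectivity of $R(\lambda)$ forces $g = T(t)f$, and a subsequence argument finishes. The missing idea in your plan is precisely this: transfer the regularity requirement from $T(t/2)f$ (where it is unavailable) to $R(\lambda)f$ (where it holds by definition), at the price of identifying the limit only after composing with the injective operator $R(\lambda)$.

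Your part (b) is fine and matches the paper's argument.
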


\begin{proof}
(a) Note that as a consequence of Lemma \ref{l.ultra} and the semigroup property $T$ consists of ultra Feller operators.
Let $s\coloneqq \inf\{ t_n : n\in \N\} >0$. The sequence $(T(t_n-s)f)$ is bounded and is thus mapped to an equicontinuous set by the ultra Feller operator $T(s)$. Thus $f_n \coloneqq T(t_n)f$ has a subsequence $(f_{n_k})$ which converges with respect to $\beta_0$, say to the function $g\in C_b(E)$. In particular, $f_{n_k}$ converges to $g$ in the weak$^*$-sense in $L^\infty(E,m)$.

We now have
\[
R(\lambda)g = \lim_{k\to\infty}R(\lambda) f_{n_k} = \lim_{k\to \infty} T(t_{n_k}) R(\lambda)f = T(t)R(\lambda )f =
R(\lambda)T(t)f.
\]
Here, the first equality is the weak$^*$-continuity of $R(\lambda)$. The second and the last equality follow from the fact that $R(\lambda)$ commutes with every operator $T(s)$ for $s>0$. The third equality follows from  Corollary \ref{c.strongcont} since $R(\lambda)$ takes values in $D(A)$. As $R(\lambda)$ is injective, we must have $g= T(t)f$.

In the same fashion we see that every subsequence of $T(t_n)f$ has a subsequence which converges (with respect to $\beta_0$) to $T(t)f$. Hence the whole sequence converges.\smallskip

(b) Let $(t_n, x_n) \to (t,x)$. By (a), $T(t_n)f \to T(t)f$ uniformly on the compact set $\{x\}\cup\{ x_n : n\in \N\}$.
\end{proof}

\section{Preliminary results on bounded domains}\label{s.bdd}

We recall that a set $\Omega\subset \R^d$ is called \emph{Dirichlet regular}, if at every point $z\in \partial \Omega$ there exists a \emph{barrier at $z$}, i.e.\ there is a radius $r>0$ and a function $w\in C(\overline{\Omega\cap B_r(z)})$, where
$B_r(z)$ denotes the open ball of radius $r$ centered at $z$, such that
\[
\Delta w \leq 0 \quad\mbox{in } \mathscr{D}(\Omega\cap B_r(z)),\qquad w(z) = 0\quad\mbox{and}\quad
w(x)>0\, \mbox{for}\, x\in \Omega\cap B_r(z).
\]

By classical results, see e.g.\ \cite[Theorem 2.14]{gt}, 
a bounded open set $\Omega$ is Dirichlet regular if and only if the classical Dirichlet problem is well posed, i.e.\ for every $\varphi \in C(\partial \Omega)$ 
we find a harmonic function $u \in C^2(\Omega)\cap C(\overline{\Omega})$ such that $u=\varphi$ on $\partial \Omega$. We should point out that every bounded Lipschitz domain is Dirichlet regular, more generally, every bounded domain that satisfies the uniform exterior cone condition is Dirichlet regular. In $\R$, every open set is Dirichlet regular. In $\R^2$, every open and simply connected subset is Dirichlet regular. For proofs and more information, we refer the reader to \cite{dl}.\smallskip

We will now recall some results from \cite{akk16} concerning diffusion operators subject to nonlocal boundary conditions on \emph{bounded} sets. Throughout, $U$ will be a bounded subset of $\R^d$. We will later apply these results to certain 
subsets $U$ of $\Omega$.

A function $g: \overline{U} \to \R$ is called \emph{Dini-continuous}, if the modulus of continuity
\[
\omega_g(t) \coloneqq \sup\big\{ |g(x) - g(y)| : x,y \in \overline{U}, |x-y|\leq t \big\}
\]
satisfies
\[
\int_0^1 \frac{\omega_g(t)}{t}\, dt < \infty.
\]
Clearly, every H\"older continuous function is Dini-continuous.\medskip

We now recall some results concerning the situation on bounded subsets of $\R^d$ from \cite{akk16}. 
We make the following assumptions.

\begin{hyp}\label{h.bddcoefficients}
Let $U\subset \R^d$ be a bounded, Dirichlet regular set and assume that we are given functions $\alpha_{ij} \in C(\overline{U})$ and $\beta_j \in L^\infty(U)$ which are real-valued for $i,j=1, \ldots, d$. The diffusion coefficients $\alpha_{ij}$ are assumed to be symmetric and strictly elliptic in the sense that there exists a constant $\kappa>0$ such that
for all $x\in \overline{U}$ and $\xi \in \R^d$ we have
\[
\sum_{i,j=1}^d \alpha_{ij}(x)\xi_i\xi_j \geq \kappa |\xi|^2.
\]
Finally, we assume that either
\begin{enumerate}
[(i)]
\item the coefficients $\alpha_{ij}$ are Dini-continuous or
\item $U$ satisfies the uniform exterior cone condition.
\end{enumerate}
\end{hyp}

We will then set 
\[
\mathscr{B}u \coloneqq \sum_{i,j=1}^d \alpha_{ij}D_iD_ju+\sum_{j=1}^d \beta_jD_j u
\]
for $u\in W(U)$.

Also on bounded domains $U$ we  consider a measure-valued function on the boundary which will give us our boundary condition. In contrast to the situation on unbounded domains, we here also allow  sub-probability measures. 
This will be important in our approximation scheme in the next section. We make the following assumptions.

\begin{hyp}\label{h.bddmeasure}
We let $\gamma: \partial U \times\mathscr{B}(U) \to \mathscr{M}(U)$. We will occasionally write $\gamma (z) \coloneqq \gamma (z, \cdot) \in \mathscr{M}(U)$. We assume that
\begin{enumerate}
\item for every $z\in \partial U$ the measure $\gamma(z)$ is positive and satisfies $0\leq \gamma(z, U) \leq 1$;
\item the map $z\mapsto \gamma (z)$ is $\sigma (\mathscr{M}(U), C_b(U))$-continuous.
\end{enumerate}
\end{hyp}

We now define the operator $B$ on $L^\infty(U)$ as follows. We set 
\begin{align*}
D(B)  \coloneqq \Big\{u & \in C_b(\overline{U})\cap W(U) : \mathscr{B}u\in L^\infty(U)\\
& \qquad u(z) = \int_U u(x)\, \gamma (z, dx)\,\forall\, z\in \partial U\Big\}.
\end{align*}

From \cite{akk16} we infer the following properties of the operator $B$.

\begin{prop}\label{p.boundeddomain}
Assume Hypotheses \ref{h.bddcoefficients} and \ref{h.bddmeasure} and let $B$ be defined as above. 
Then the following hold true:
\begin{enumerate}
[(a)]
\item  $(0,\infty) \subset \rho (B)$. For $\lambda >0$ the resolvent $R(\lambda, B) \in \mathscr{L}(L^\infty(U))$ is a positive operator that satisfies $\|\lambda R(\lambda, B)\|\leq 1$.
\item $B$ is the generator of an analytic semigroup $S = (S(t))_{t>0}$ which is positive and contractive.
\item The operators $R(\lambda, B)$ ($\lambda >0$) and $S(t)$ ($t>0$) are strongly Feller in the sense of Definition \ref{def.sf}.
\end{enumerate}
\end{prop}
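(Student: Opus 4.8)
The plan is to reconstruct Proposition~\ref{p.boundeddomain} (which is quoted from \cite{akk16}) by the classical device of reducing the nonlocal problem to the Dirichlet problem via a Poisson-type operator. Write $\mathscr B=\sum\alpha_{ij}D_iD_j+\sum\beta_jD_j$, and let $B_0$ be the realization of $\mathscr B$ on $L^\infty(U)$ with the \emph{classical} Dirichlet condition, i.e.\ with domain $\{u\in C_b(\overline U)\cap W(U):\mathscr Bu\in L^\infty(U),\ u|_{\partial U}=0\}$. Under Hypothesis~\ref{h.bddcoefficients} it is classical — using interior $W^{2,p}$- and Schauder--Dini estimates together with Dirichlet regularity in case (i), and barrier arguments under the exterior cone condition in case (ii), so that $W^{2,p}$-solutions of the Dirichlet problem are continuous up to $\partial U$ — that $B_0$ is sectorial and generates a positive contraction analytic semigroup, and that $R(\lambda,B_0)$ acts through a nonnegative Green kernel $g_\lambda$ with $\int_U g_\lambda(x,\cdot)\le 1/\lambda$; in particular $R(\lambda,B_0)$ is a strong Feller operator. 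For $\varphi\in C(\partial U)$ I let $L_\lambda\varphi\in C(\overline U)\cap W(U)$ be the unique solution of $\lambda v-\mathscr Bv=0$, $v|_{\partial U}=\varphi$ (existence by Dirichlet regularity and elliptic regularity, uniqueness by the maximum principle); then $L_\lambda$ is a positive kernel operator with $\|L_\lambda\|\le 1$. Finally I set $N_\gamma u(z):=\int_U u\,d\gamma(z)$, a positive operator $C_b(\overline U)\to C(\partial U)$ with $\|N_\gamma\|\le 1$ by Hypothesis~\ref{h.bddmeasure}.

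First I would observe that every $u\in C_b(\overline U)\cap W(U)$ with $\mathscr Bu\in L^\infty(U)$ solving $\lambda u-\mathscr Bu=f$ splits as $u=R(\lambda,B_0)f+L_\lambda\psi$ with $\psi=u|_{\partial U}$, so that the nonlocal boundary condition is equivalent to $(I-N_\gamma L_\lambda)\psi=N_\gamma R(\lambda,B_0)f$. The key point is that $N_\gamma L_\lambda$ is a positive operator on the AM-space $C(\partial U)$, hence $\|N_\gamma L_\lambda\|=\|N_\gamma L_\lambda\one\|_\infty=\sup_{z\in\partial U}\int_U (L_\lambda\one)\,d\gamma(z)$; since $\lambda>0$ the strong maximum principle gives $L_\lambda\one<\one$ pointwise on $U$, and since $z\mapsto\int_U(L_\lambda\one)\,d\gamma(z)$ is continuous on the compact set $\partial U$ by weak$^*$-continuity of $\gamma$, it follows that $\|N_\gamma L_\lambda\|<1$. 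Therefore $I-N_\gamma L_\lambda$ is invertible with positive inverse $\sum_{k\ge0}(N_\gamma L_\lambda)^k$, and
\[
R(\lambda,B):=R(\lambda,B_0)+L_\lambda(I-N_\gamma L_\lambda)^{-1}N_\gamma R(\lambda,B_0)
\]
is a positive operator on $L^\infty(U)$. A direct computation shows $R(\lambda,B)f\in D(B)$ and $(\lambda-B)R(\lambda,B)=I$, while $\lambda-B$ is injective because any $u\in D(B)$ with $\lambda u=\mathscr Bu$ satisfies $u=L_\lambda(u|_{\partial U})$, hence $(I-N_\gamma L_\lambda)(u|_{\partial U})=0$. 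This gives $(0,\infty)\subset\rho(B)$ with positive resolvent; the bound $\|\lambda R(\lambda,B)\|\le1$ follows since $\lambda R(\lambda,B)$ is positive, whence $\|\lambda R(\lambda,B)\|=\|\lambda R(\lambda,B)\one\|_\infty$, and a routine maximum-principle argument applied to $u=R(\lambda,B)\one$ (treating separately the cases where $\max\lambda u$ is attained in $U$ or on $\partial U$, using $\gamma(z,U)\le1$) yields $0\le\lambda u\le\one$. This proves (a). For (c) I would argue that $R(\lambda,B_0)$ is strong Feller as recalled; $N_\gamma R(\lambda,B_0)\colon L^\infty(U)\to C(\partial U)$ is pointwise continuous (dominated convergence first against $g_\lambda$, then against $\gamma(z)$); $(I-N_\gamma L_\lambda)^{-1}$ is a bounded kernel operator on $B_b(\partial U)$ preserving $C(\partial U)$ (Neumann series of the kernel operator $N_\gamma L_\lambda$); and $L_\lambda$ maps $C(\partial U)$ into $C(\overline U)$ pointwise-continuously. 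Composing, $L_\lambda(I-N_\gamma L_\lambda)^{-1}N_\gamma R(\lambda,B_0)$ is strong Feller, hence so is $R(\lambda,B)$; the corresponding statement for $S(t)$ I would get by writing $S(t)=\frac1{2\pi i}\int_\Gamma e^{zt}R(z,B)\,dz$ over a sectorial contour (using (b)) and noting that $\|R(z,B)\|_{\mathscr L(L^\infty(U),C_b(\overline U))}$ is polynomially bounded on $\Gamma$, so the integral converges in $\mathscr L(L^\infty(U),C_b(\overline U))$ for $t>0$ and its pointwise continuity on $L^\infty(U)$ is inherited by dominated convergence.

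The step I expect to be the main obstacle is the analyticity in (b). Here I would extend the representation of $R(\lambda,B)$ to a sector $\Sigma_\delta=\{|\arg\lambda|<\tfrac\pi2+\delta\}$ and establish $\|R(\lambda,B)\|\le M/|\lambda|$ there. Sectoriality of $B_0$ is classical; the identity $L_\lambda\varphi=L_1\varphi+(1-\lambda)R(\lambda,B_0)L_1\varphi$ (obtained by subtracting the fixed extension $L_1\varphi$) yields holomorphy of $\lambda\mapsto L_\lambda$ together with a suitable bound on $\Sigma_\delta$. The genuinely delicate point is controlling $(I-N_\gamma L_\lambda)^{-1}$ on the whole sector: the estimate $\|N_\gamma L_\lambda\|<1$ rested on the maximum principle and is unavailable for non-real $\lambda$. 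I would handle this by a perturbation and continuity argument near the positive real axis (where the estimate is stable), and for the remainder of the sector combine holomorphy with the bound already obtained on $(0,\infty)$ and the identity $R(\lambda,B)-R(\lambda,B_0)=L_\lambda(I-N_\gamma L_\lambda)^{-1}N_\gamma R(\lambda,B_0)$ to show that the perturbation term is of the same order $O(1/|\lambda|)$ as $R(\lambda,B_0)$ on $\Sigma_\delta$. Positivity and contractivity of the semigroup then follow from (a), e.g.\ via $S(t)=\lim_{n\to\infty}\bigl(\tfrac nt R(\tfrac nt,B)\bigr)^n$.
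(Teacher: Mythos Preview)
The paper's own proof is just a list of citations to \cite{akk16}. Your reconstruction via the splitting $R(\lambda,B)=R(\lambda,B_0)+L_\lambda(I-N_\gamma L_\lambda)^{-1}N_\gamma R(\lambda,B_0)$ and the Neumann series based on $\|N_\gamma L_\lambda\|<1$ for real $\lambda>0$ is the device used there, and your treatment of (a) and of the strong Feller property of the resolvent is correct.

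The gap you flag in (b) is real and your sketch does not close it. ``Perturbation and continuity near the positive real axis'' yields invertibility of $I-N_\gamma L_\lambda$ only on a narrow cone, and ``combine holomorphy with the bound on $(0,\infty)$'' is not an argument: holomorphy of $\lambda\mapsto N_\gamma L_\lambda$ says nothing about whether $1\in\sigma(N_\gamma L_\lambda)$ somewhere in the sector, so your representation formula has no a~priori meaning there and you have no resolvent bound off $(0,\infty)$. The missing ingredient is a mechanism that upgrades injectivity to invertibility uniformly in $\lambda$. One route is compactness: since $\{\gamma(z):z\in\partial U\}$ is tight (continuous image of the compact $\partial U$, with each $\gamma(z)$ supported in the \emph{open} set $U$) and $L_\lambda$ enjoys interior elliptic estimates, the operator $N_\gamma L_\lambda$ is compact on $C(\partial U)$; the Fredholm alternative then reduces everything to injectivity of $\lambda-\mathscr B$ on $D(B)$, which for $\Re\lambda>0$ follows from the complex maximum principle (Lemma~\ref{l.cmp}) combined with Lemma~\ref{l.bc} applied to $|u|$. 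A uniform bound on $(I-N_\gamma L_\lambda)^{-1}$ over the right half-plane and the sectorial estimate for $R(\lambda,B_0)$ then give $\|\lambda R(\lambda,B)\|\le M$ there, from which the analytic extension is standard. Without such a step your plan for (b) is incomplete. A secondary gap: your contour argument for the strong Feller property of $S(t)$ needs the kernel property of $R(z,B)$ for $z$ on $\Gamma$, which you established only for real $z>0$; once $R(z,B)$ exists this can be recovered from $R(z,B)=R(1,B)+(1-z)R(1,B)R(z,B)$ and the fact (see the proof of Lemma~\ref{l.amadjoint}) that a strong Feller operator converts weak$^*$-convergence into pointwise convergence.
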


\begin{proof}
It was seen in \cite[Theorem 4.8]{akk16} that $B$ generates an analytic semigroup on $L^\infty(U)$ which is positive. In \cite[Proposition 4.12]{akk16} it was proved that this semigroup is also contractive. This shows (b). Inspecting the proofs, we see that actually all statements concerning (a) were proved along the way. Part (c)  was established in \cite[Proposition 5.7]{akk16} for $R(\lambda, B)$ and in \cite[Corollary 5.8]{akk16} for the semigroup.
\end{proof}

\begin{rem}
We should point out that we can view the semigroup $S$ generated by $B$ also as a contractive and injective $*$-semigroup on $L^\infty(U)$. Indeed, 
being analytic, the semigroup $S$ is immediately norm continuous and the resolvent can be computed from the semigroup via an $\mathscr{L}(L^\infty(U))$-valued Bochner integral. From this the weaker measurability and integrability conditions in Definition \ref{d.sg} follow. The only thing which is not obvious is that we are dealing with adjoint operators. This, however, follows from Lemma \ref{l.amadjoint} in view of the strong Feller property.
\end{rem}

We now collect some appropriate maximum principles for our situation.

\begin{lem}\label{l.cmp} 
Assume Hypothesis \ref{h.bddcoefficients}.
Let $u\in W(U)$ be a complex valued function such that $\mathscr{B}u$ coincides almost everywhere with a continuous function on $U$ and assume that $|u(x)| \leq |u(x_0)|$ in a neighborhood of $x_0$. Then
\[
\Re [\overline{u(x_0)}\mathscr{B}u(x_0)| \leq 0.
\]
In particular, if $u$ is real valued and $u(x_0)>0$, then $\mathscr{B}u(x_0)\leq 0$.
\end{lem}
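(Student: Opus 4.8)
The plan is to establish the pointwise inequality $\Re[\overline{u(x_0)}\mathscr{B}u(x_0)] \le 0$ by a local argument at the point $x_0$ where $|u|$ attains a local maximum, combined with the ellipticity of the diffusion part. First I would note that if $u(x_0) = 0$ there is nothing to prove, so we may assume $u(x_0) \neq 0$; after multiplying $u$ by the unimodular constant $\overline{u(x_0)}/|u(x_0)|$ (which changes neither $|u|$ nor the sign of $\Re[\overline{u(x_0)}\mathscr{B}u(x_0)]$, since $\mathscr{B}$ is linear with real coefficients), we may assume that $u(x_0) > 0$ is real and equal to $\max_{B_r(x_0)}|u|$. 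Writing $u = v + iw$ with $v,w$ real, the function $v$ then has an interior local maximum at $x_0$ with $v(x_0) = u(x_0) > 0$, while $w(x_0) = 0$.

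Since $u\in W(U) = \bigcap_{1<p<\infty}W^{2,p}_{\loc}(U)$, by Sobolev embedding (with $p>d$) $u$ has a $C^1$ representative, and $\mathscr{B}u$ is (a.e.\ equal to) a continuous function; thus $v, w \in C^1$ near $x_0$ and have second-order weak derivatives in $L^p_{\loc}$. The key analytic input is the following almost-everywhere second-order information at an interior maximum: since $v$ attains a local maximum at $x_0$, we have $\nabla v(x_0) = 0$, and the Hessian of $v$ at $x_0$ (understood in an approximate/Lebesgue-point sense, which is the standard way to run a maximum principle for $W^{2,p}$ functions) is negative semidefinite. Because the diffusion matrix $(\alpha_{ij}(x_0))$ is symmetric, positive definite (it is $\ge \kappa I$), a positive semidefinite matrix, the contraction $\sum_{i,j}\alpha_{ij}(x_0)D_iD_jv(x_0)$ of two such matrices is $\le 0$; and $\sum_j \beta_j(x_0)D_jv(x_0) = 0$ since $\nabla v(x_0)=0$. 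Hence $\mathscr{B}v(x_0)\le 0$. Since $u(x_0) = v(x_0)$ is real and positive, and $\Re[\overline{u(x_0)}\mathscr{B}u(x_0)] = v(x_0)\,\mathscr{B}v(x_0) \le 0$, this gives the claim; the final sentence (real $u$, $u(x_0)>0$) is just the special case $w\equiv 0$, already covered.

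The main obstacle is making the ``$D^2v(x_0)\le 0$ a.e.'' statement precise, since for a $W^{2,p}_{\loc}$ function the pointwise Hessian need not exist at every point. I would handle this in the standard way: either invoke Alexandrov-type / Lebesgue-point arguments (for $p > d$ the function is twice differentiable a.e.\ in the Alexandrov sense and one argues at such points, using that the maximum constraint forces the approximate Hessian to be $\le 0$ on a positive-density set), or — cleaner for this paper — reduce to the classical case via mollification: let $u_\eps = u * \rho_\eps$, so $\mathscr{B}u_\eps \to \mathscr{B}u$ locally in $L^p$ and, passing to Lebesgue points, a.e.; the difficulty is that $u_\eps$ need not have a maximum exactly at $x_0$, so one instead applies the classical second-order necessary condition at a nearby maximizer $x_\eps \to x_0$ of $\Re(\overline{u(x_0)}u_\eps)$ on a small ball, uses $\nabla$-vanishing and Hessian $\le 0$ there together with continuity of $\alpha_{ij}$, and lets $\eps\to 0$. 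This is routine but is the only point requiring care; everything else is algebra with the ellipticity condition from Hypothesis \ref{h.bddcoefficients}.
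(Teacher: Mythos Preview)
The paper does not prove this lemma; it simply cites \cite[Lemma~3.2]{as14}. Your reconstruction follows the natural line of that result and is correct in outline: the reduction to the real case via multiplication by the unimodular constant $\overline{u(x_0)}/|u(x_0)|$, the observation that $v=\Re u$ then has a local maximum at $x_0$ with $v(x_0)>0$, and the use of ellipticity to kill the second-order part are all exactly right.

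The one place where your sketch is too optimistic is the mollification step. You call it ``routine'', but passing from $\mathscr{B}v_\eps(x_\eps)\le 0$ (at a moving maximizer $x_\eps$) to $\mathscr{B}v(x_0)\le 0$ is not automatic: $\mathscr{B}v_\eps\to\mathscr{B}v$ only in $L^p_{\mathrm{loc}}$, and pointwise control along a moving sequence of points does not follow for free; one has to bring in the assumed continuity of $\mathscr{B}v$ together with the uniform continuity of the $\alpha_{ij}$ via a commutator estimate between mollification and multiplication by $\alpha_{ij}$. This can be done, but it is not as short as you suggest. A cleaner route---and the one that makes the hypothesis ``$\mathscr{B}u$ has a continuous representative'' do all the work---is to argue by contradiction: if $\mathscr{B}v(x_0)>0$, then by continuity $\mathscr{B}v>0$ on some ball $B_r(x_0)\subset U$ on which also $v\le v(x_0)$; since $\mathscr{B}$ has no zero-order term, the strong maximum principle for $W^{2,d}_{\mathrm{loc}}$ functions \cite[Theorem~9.6]{gt} forces $v$ to be constant on $B_r(x_0)$, whence $\mathscr{B}v=0$ there---contradicting $\mathscr{B}v(x_0)>0$. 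This avoids the Alexandrov/mollification machinery entirely.
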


\begin{proof}
Lemma 3.2 of \cite{as14}.
\end{proof}

The next Lemma relates the position of possible maxima to the boundary condition. Here, and also subsequently, we use the following notation. Given a measure $\gamma$ on $U$ and a function $u\in C(\overline{U})$, we define $\langle u, \gamma \rangle$ by
\[
\langle u, \gamma\rangle \coloneqq \int_U f\, d\gamma.
\]

\begin{lem}\label{l.bc}
Assume Hypothesis \ref{h.bddmeasure}. Let $u\in C(\overline{U})$ be a real-valued function such that $u(z) \leq \langle u, \gamma(z)\rangle$ for all $z\in \partial U$. If $c\coloneqq \max_{x\in \overline{U}}u(x)>0$, then we find a point $x_0\in U$ such that $u(x_0) = c$.
\end{lem}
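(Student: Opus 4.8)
The plan is to argue by contradiction, assuming that the maximum $c = \max_{x\in\overline{U}} u(x) > 0$ is attained only on the boundary $\partial U$ and never in the interior $U$. First I would introduce the (nonempty, compact) set $F \coloneqq \{z \in \partial U : u(z) = c\}$ where the maximum is realized; by our assumption $F \subseteq \partial U$. For any $z \in F$ we then have
\[
c = u(z) \leq \langle u, \gamma(z)\rangle = \int_U u(x)\,\gamma(z,dx) \leq c\,\gamma(z,U) \leq c,
\]
using $u \leq c$ on all of $\overline U$, positivity of $\gamma(z)$, and $\gamma(z,U)\leq 1$ from Hypothesis \ref{h.bddmeasure}. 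Since $c > 0$, all inequalities must be equalities; in particular $\gamma(z,U) = 1$ and $\int_U (c - u)\,d\gamma(z) = 0$. As $c - u \geq 0$ is continuous on $U$, this forces $c - u = 0$ $\gamma(z)$-almost everywhere, hence $u = c$ on $\supp\gamma(z)$. But $\supp\gamma(z) \subseteq U$ is a nonempty (because $\gamma(z,U)=1>0$) subset of the \emph{interior}, so there exists a point $x_0 \in U$ with $u(x_0) = c$, contradicting the assumption that the maximum is attained only on $\partial U$.

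The step that requires the most care is the passage from $\int_U(c-u)\,d\gamma(z) = 0$ to the existence of an interior maximum point: one needs that $\gamma(z)$ is a genuine (Borel) measure, that $c - u$ is continuous and nonnegative on $U$, and that $\gamma(z,U) = 1$ guarantees $\gamma(z)$ is not the zero measure so that its support is nonempty and lies in $U$. This is where Hypothesis \ref{h.bddmeasure} enters decisively — without $\gamma(z,U) = 1$ being forced (which in turn relied on $c > 0$), the measure could a priori be trivial and the argument would break; the continuity hypothesis on $z \mapsto \gamma(z)$ is not needed for this particular lemma. Once the existence of $x_0 \in U$ with $u(x_0) = c$ is established, we are done, since this directly contradicts our indirect assumption, and therefore the maximum must be attained at some interior point.

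I do not expect to need the maximum principle Lemma \ref{l.cmp} or any regularity of $u$ beyond continuity on $\overline U$; the whole argument is a soft consequence of the averaging structure of the nonlocal boundary condition together with the sub-Markovian normalization $\gamma(z,U)\leq 1$.
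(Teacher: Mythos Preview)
Your argument is correct and is essentially the same as the one the paper invokes: the paper does not reprove this lemma but cites \cite[Lemma~4.10]{akk16}, and its description of that proof (``there is some $x_0\in\overline{U}$ with $u(x_0)=\max u$, then it is proved that $x_0$ cannot lie on $\partial U$'') amounts to exactly the computation you carry out---from $c=u(z)\leq\int_U u\,d\gamma(z)\leq c\,\gamma(z,U)\leq c$ one forces $\gamma(z,U)=1$ and $u=c$ $\gamma(z)$-a.e., producing an interior maximizer. Your observation that the weak continuity of $z\mapsto\gamma(z)$ is not used here is also accurate.
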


\begin{proof}
This is Lemma 4.10 of \cite{akk16}.
\end{proof}

In the proof of \cite[Lemma 4.10]{akk16}, the boundedness of $U$ is only used to infer, that, by compactness, there is some $x_0 \in \overline{U}$ with $u(x_0) = \max_{x\in \overline{U}} u(x)$. Then it is proved that $x_0$ cannot lie on the boundary $\partial U$. However, inspecting the proof, we see that we obtain the following version for \emph{unbounded} domains:

\begin{lem}\label{l.bc2}
Assume Hypothesis \ref{h.mu} and let $u\in C_b(\overline{\Omega})$ be a real-valued function such that $u(z) \leq \langle u, \mu (z)\rangle$ for all $z\in \partial \Omega$. If $S \coloneqq \sup_{x\in \overline{\Omega}} u(x) >0$, then $u(z) < S$ for all $z\in \partial \Omega$.
\end{lem}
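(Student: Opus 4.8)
The plan is to adapt the proof of Lemma \ref{l.bc} (that is, \cite[Lemma 4.10]{akk16}) to the unbounded setting. The key point is that the original proof, once it has located a maximizer on the boundary, derives a contradiction purely by a local argument near that boundary point together with the strong maximum principle and the barrier characterizing Dirichlet regularity; none of that local machinery actually needs $\Omega$ to be bounded. What \emph{does} use boundedness is the very first step, namely the existence of a maximizer. Since on an unbounded domain the supremum $S$ need not be attained, we instead argue by contradiction directly: suppose there is some $z_0 \in \partial\Omega$ with $u(z_0) = S > 0$.

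First I would record that the hypothesis $u(z) \le \langle u, \mu(z)\rangle$ for all $z \in \partial\Omega$, combined with the fact that each $\mu(z)$ is a probability measure \emph{supported in $\Omega$} (not in $\overline\Omega$), forces $u$ to be \emph{strictly} below $S$ somewhere. Indeed, if $u(z_0) = S$ then
\[
S = u(z_0) \le \langle u, \mu(z_0)\rangle = \int_\Omega u(x)\, \mu(z_0, dx) \le S,
\]
so equality holds throughout, which means $u(x) = S$ for $\mu(z_0)$-almost every $x \in \Omega$. In particular the (nonempty, since $\mu(z_0)$ is a probability measure) set $\{x \in \Omega : u(x) = S\}$ is nonempty; pick $x_1$ in its closure, so $x_1 \in \overline\Omega$ and $u(x_1) = S$ by continuity. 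If $x_1 \in \Omega$ we are immediately in the situation ``$S$ is attained at an interior point'', and if $x_1 \in \partial\Omega$ we may rename it $z_0$ and repeat — but the cleaner route is: the set $\Omega_S \coloneqq \{x \in \Omega : u(x) = S\}$ is a nonempty relatively closed subset of $\Omega$ on which $u$ attains an interior-of-$\overline\Omega$ maximum value. Now I would invoke exactly the local part of the argument from \cite[Lemma 4.10]{akk16}: take a point $x^* \in \overline{\Omega_S}$ that lies on $\partial\Omega$ (if $\Omega_S$ stays away from $\partial\Omega$ there is nothing to prove for the boundary statement, and if it touches the boundary we get our candidate), use the barrier at $x^*$ from Dirichlet regularity together with the Hopf-type / strong maximum principle (Lemma \ref{l.cmp}) to contradict $u(z^*) = S$ for any boundary point $z^* \in \partial\Omega$ with $u(z^*) = S$.

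Actually, the most economical phrasing: suppose for contradiction $u(z_0) = S$ for some $z_0 \in \partial\Omega$. By the computation above, $u = S$ $\mu(z_0)$-a.e.\ on $\Omega$, so $\Omega_S = \{u = S\} \cap \Omega$ is nonempty. Fix $z_0$ and note that the proof of \cite[Lemma 4.10]{akk16} shows that a boundary point where a positive supremum of $u$ is attained leads to a contradiction: one builds a small ball $B_r(z_0)$ with a barrier $w$, compares $u$ with $S - \epsilon w$ on $\Omega \cap B_r(z_0)$ via Lemma \ref{l.cmp} applied to $v \coloneqq u - (S - \epsilon w)$ (which satisfies $\mathscr{A}v \ge 0$-type inequality where relevant and $v \le 0$ on the relevant part of the boundary for suitable $\epsilon$), and concludes $u < S$ in a punctured neighborhood of $z_0$ inside $\Omega$; but on the other hand $u(z_0) = \langle u, \mu(z_0)\rangle \le S$ with equality forcing the mass of $\mu(z_0)$ onto $\{u = S\}$, and a short argument (the same concentration-of-mass reasoning as in \cite{akk16}, using that $\mu(z_0)$ is inner regular and $\{u=S\}$ is closed in $\Omega$) contradicts $u < S$ near $z_0$. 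I would simply write: ``The argument in the proof of \cite[Lemma 4.10]{akk16} applies verbatim once we know that the value $S$ is finite (which holds since $u \in C_b(\overline\Omega)$) and that, if $u(z_0) = S$ for some $z_0 \in \partial\Omega$, then $\mu(z_0)$ assigns full mass to $\{x \in \overline\Omega : u(x) = S\}$; this is precisely the content of the displayed chain of inequalities above. Hence $u(z) < S$ for every $z \in \partial\Omega$.''

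The main obstacle — and the only place the plan needs care — is making sure that the local barrier argument of \cite[Lemma 4.10]{akk16} really is insensitive to boundedness: it is, because it operates on the \emph{bounded} set $\Omega \cap B_r(z_0)$ and uses only the strict ellipticity and (local) boundedness of the coefficients near $\partial\Omega$ guaranteed by Hypothesis \ref{h.coeff}, together with the barrier from Dirichlet regularity, none of which degenerate near the boundary. A secondary subtlety is the concentration-of-mass step: since $\mu(z)$ is a probability measure on $\Omega$ (an \emph{open} set), ``full mass on the closed-in-$\Omega$ set $\{u=S\}\cap\Omega$'' combined with ``$u<S$ on a deleted neighborhood of $z_0$ in $\Omega$'' still yields a contradiction provided $\{u=S\}\cap\Omega$ is \emph{entirely} outside a neighborhood of $z_0$, which is exactly what the barrier argument delivers. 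So the write-up is essentially a pointer to \cite[Lemma 4.10]{akk16} plus the remark, already present in the paragraph following Lemma \ref{l.bc} in the excerpt, that boundedness was used there \emph{only} to obtain a maximizer by compactness, a step we bypass here by working with the supremum and arguing by contradiction.
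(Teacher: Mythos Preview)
Your overall strategy — assume $u(z_0)=S$ for some $z_0\in\partial\Omega$, use $S=u(z_0)\le\langle u,\mu(z_0)\rangle\le S$ to force $\mu(z_0)$ to concentrate on $\{u=S\}\cap\Omega$, and then defer to the argument of \cite[Lemma~4.10]{akk16} — is exactly what the paper does. The paper's proof is nothing more than the remark preceding Lemma~\ref{l.bc2}: compactness is used in \cite{akk16} only to produce a maximizer, and the remainder of that proof shows such a maximizer cannot lie on $\partial U$.

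The problem is that you have misidentified what the ``remainder of that proof'' actually contains. Lemma~\ref{l.bc} (which \emph{is} \cite[Lemma~4.10]{akk16}) is stated under Hypothesis~\ref{h.bddmeasure} alone: no differential operator, no ellipticity, no Dirichlet regularity, no coefficients. Likewise, Lemma~\ref{l.bc2} assumes only Hypothesis~\ref{h.mu}; the function $u$ is merely in $C_b(\overline{\Omega})$, not in $W(\Omega)$, and $\mathscr{A}u$ is not defined. Consequently none of the machinery you invoke — barriers from Dirichlet regularity, the Hopf-type maximum principle, Lemma~\ref{l.cmp}, ``strict ellipticity and local boundedness of the coefficients near $\partial\Omega$ guaranteed by Hypothesis~\ref{h.coeff}'' — is available, and none of it appears in the proof of \cite[Lemma~4.10]{akk16}. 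Your proposed comparison of $u$ with $S-\varepsilon w$ via Lemma~\ref{l.cmp} applied to $v=u-(S-\varepsilon w)$ is simply not well-posed here, since $\mathscr{A}v$ has no meaning for a general $u\in C_b(\overline{\Omega})$.

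The argument you need from \cite{akk16} is purely measure-theoretic: it uses only that each $\mu(z)$ is a (sub-)probability measure on the \emph{open} set $\Omega$ and that $z\mapsto\mu(z)$ is $\sigma(\mathscr{M}(\Omega),C_b(\Omega))$-continuous. You have confused Lemma~\ref{l.bc} with Lemma~\ref{lem:maxprinz}; the latter does combine the boundary inequality with the differential operator, but that is a different statement with different hypotheses. Strip the PDE tools from your write-up and replace them with the actual measure-theoretic step from \cite[Lemma~4.10]{akk16}; then your reduction matches the paper's.
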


We can now establish a maximum principle for our differential operator that involves the boundary condition.

\begin{lem}
\label{lem:maxprinz}
Assume Hypotheses \ref{h.bddcoefficients} and \ref{h.bddmeasure} and let $\lambda >0$. If $u \in C(\overline{U})\cap W(U)$ is a real valued function such that $\mathscr{B} u$ coincides almost everywhere on $U$ with a continuous function,
$(\lambda -\mathscr{B})u \leq 0$ on $U$ and $u \leq \applied{ u}{ \gamma }$ on $\partial U$, then
$u \leq 0$.
\end{lem}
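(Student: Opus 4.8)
The plan is to argue by contradiction, using the maximum principles just established together with the pointwise maximum principle of Lemma~\ref{l.cmp}. Suppose, for contradiction, that $c \coloneqq \max_{x \in \overline U} u(x) > 0$. Since $u \le \applied{u}{\gamma}$ on $\partial U$ and $\gamma(z)$ is a sub-probability measure, we have $u(z) \le \applied{u}{\gamma(z)} \le \gamma(z,U)\cdot c \le c$ for $z \in \partial U$, so in particular $c$ is attained. The first step is to invoke Lemma~\ref{l.bc}: since $c > 0$, there is an interior point $x_0 \in U$ with $u(x_0) = c$. Thus $u$ attains its global maximum at an interior point where its value is strictly positive.

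Next I would apply Lemma~\ref{l.cmp} at the point $x_0$. Since $u$ is real valued, $u(x_0) = c > 0$, and $|u(x)| = u(x) \le c = u(x_0)$ in a neighborhood of $x_0$ (indeed on all of $\overline U$), the hypothesis of Lemma~\ref{l.cmp} is satisfied, and we conclude $\mathscr{B}u(x_0) \le 0$. On the other hand, the standing assumption $(\lambda - \mathscr{B})u \le 0$ on $U$ gives $\mathscr{B}u(x_0) \ge \lambda u(x_0) = \lambda c > 0$, since $\lambda > 0$ and $c > 0$. This is the desired contradiction, so $c \le 0$, i.e.\ $u \le 0$.

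The main subtlety — and the only place care is needed — is the invocation of Lemma~\ref{l.bc}: one must check that its hypothesis $u(z) \le \applied{u}{\gamma(z)}$ on $\partial U$ is exactly what is assumed here (it is), and that the maximum $c$ is genuinely attained on $\overline U$, which holds by continuity of $u$ on the compact set $\overline U$. A secondary point is that $\mathscr{B}u$ is assumed only to agree almost everywhere with a continuous function, but this is precisely the regularity under which Lemma~\ref{l.cmp} is stated, so no extra work is needed there; one should simply phrase the evaluation $\mathscr{B}u(x_0)$ via that continuous representative. Everything else is a short chain of inequalities, so I do not anticipate any serious obstacle — the statement is essentially a combination of Lemmas~\ref{l.cmp} and~\ref{l.bc}.
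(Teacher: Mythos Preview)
Your proof is correct and follows essentially the same approach as the paper: assume the maximum $c>0$, use Lemma~\ref{l.bc} to locate the maximum at an interior point $x_0$, apply Lemma~\ref{l.cmp} to get $\mathscr{B}u(x_0)\le 0$, and derive a contradiction from $(\lambda-\mathscr{B})u\le 0$. The paper packages the final contradiction as the chain $0\le \lambda u(x_0)^2 \le u(x_0)\mathscr{B}u(x_0)\le 0$, but this is just your inequalities multiplied through by $u(x_0)>0$.
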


\begin{proof}
If there exists $x \in U$ with $u(x) >0$, then $c\coloneqq \sup_{x\in \overline{U}} u(x) >0$. As 
$u\leq \applied{ u}{ \gamma}$
it follows from Lemma \ref{l.bc} that there is some $x_0 \in U$ with $u(x_0) = c$. 
By Lemma \ref{l.cmp}
we have $\mathscr{B} u(x_0) \leq 0$. Consequently, \[ 0\leq \lambda u(x_0)^2 \leq u(x_0)\mathscr{B} u(x_0) \leq 0, \]
in contradiction to $u(x_0) > 0$. This proves that $u\leq 0$.
\end{proof}

\section{The elliptic equation}\label{s.ell}

We are now ready to tackle  the solvability of the elliptic equation

\begin{equation}\label{eq.ell}
\begin{cases}
\lambda u - \cA u & = f \mbox{ on } \Omega\\
u(z) & = \applied{ u}{ \mu(z)} \mbox{ on } \partial \Omega
\end{cases}
\end{equation}
for $\lambda >0$ and $f\in L^\infty(\Omega)$. From now on, we are again in the situation of Hypotheses \ref{h.coeff} and \ref{h.mu}. In particular, $\Omega$ may be an unbounded set and the coefficients in the operator $\mathscr{A}$ may be unbounded. 

The main idea to construct solutions to Equation \eqref{eq.ell} is the same as in \cite{mpw02}, namely to consider approximate problems on bounded domains and to show that the solutions of these approximate problems converge, in a suitable sense, to a solution of Equation \eqref{eq.ell}. To that end, we set
$\Omega_n \coloneqq \Omega \cap B_{n+1}(0)$, where, as before, $B_r(x)$ denotes the open ball of radius $r$, centered at $x$. Note that as an intersection of two Dirichlet regular sets, the set $\Omega_n$ is again Dirichlet regular, cf.\ \cite[Lemma 3.5]{ab99}. We also recall that in the case where the diffusion coefficients $a_{ij}$ are merely assumed to be continuous, we have explicitly required in (ii) of Hypothesis \ref{h.coeff} that $\Omega_n$ satisfies the uniform outer cone condition.
Altogether, we see that Hypothesis \ref{h.bddcoefficients} is satisfied for $U = \Omega_n$, $\alpha_{ij}=a_{ij}|_{\Omega_n}$
and $\beta_j = b_j|_{\Omega_n}$.\smallskip

To define approximate boundary conditions on $\partial\Omega_n$, we proceed as follows.
We fix functions $\rho_n \in C(\R^d)$ satisfying $\one_{B_n(0)} \leq \rho_n\leq \one_{B_{n+1}(0)}$ and define
$\mu_n\colon \partial  \Omega_n \times \mathscr{B}(\Omega_n) \to \mathscr{M}(\Omega_n)$ by setting
\begin{equation}
\label{eq.mun}
\mu_n (z, A) = \begin{cases}
\rho_n(z) \int_A \rho_n(x)\, \mu (z, dx) & \mbox{ for } z \in \partial \Omega_n\cap\partial\Omega\\
0 & \mbox{ for } z \in \partial\Omega_n \setminus \partial\Omega \subset \partial B_{n+1}(0).
\end{cases}
\end{equation}

As before, we occasionally write $\mu_n(z)\coloneqq \mu_n(z,\cdot)$.  

\begin{lem}\label{l.approxmeas}
For the measures $\mu_n$ defined above, the following assertions hold true:
\begin{enumerate}
[(a)]
\item Every $\mu_n(z)$ is a positive measure satisfying $0 \leq \mu_n(z, \Omega_n) \leq 1$;
\item the map $z \mapsto \mu_n(z)$ is $\sigma (\mathscr{M}(\Omega_n), C_b(\Omega_n))$-continuous;
\item for every $z \in \partial \Omega_n \cap\partial\Omega_{n+1}$ we have $\mu_n(z) \leq \mu_{n+1}(z)$.
\end{enumerate}
\end{lem}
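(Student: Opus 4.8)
The plan is to verify each of the three assertions directly from the definition \eqref{eq.mun} of $\mu_n$, using Hypothesis \ref{h.mu} for $\mu$ and the elementary properties of the cutoff functions $\rho_n$. None of these is deep; the point is simply to record that the approximate boundary data fit into the framework of Hypothesis \ref{h.bddmeasure} and that they increase with $n$, which is what makes the monotone approximation scheme (via Proposition \ref{prop:supremumsemigroup}) run.

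For (a), fix $z\in\partial\Omega_n$. If $z\in\partial\Omega_n\setminus\partial\Omega$, then $\mu_n(z)=0$, which is trivially a positive measure with total mass $0$. If $z\in\partial\Omega_n\cap\partial\Omega$, then $\mu_n(z,\cdot)=\rho_n(z)\,(\rho_n\cdot\mu(z))|_{\Omega_n}$; since $\rho_n\geq 0$ and $\mu(z)$ is a (positive) probability measure by Hypothesis \ref{h.mu}(i), $\mu_n(z)$ is positive, and
\[
\mu_n(z,\Omega_n)=\rho_n(z)\int_{\Omega_n}\rho_n(x)\,\mu(z,dx)\leq \rho_n(z)\cdot\mu(z,\Omega)\leq 1,
\]
using $0\leq\rho_n\leq 1$ and $\mu(z,\Omega)=1$.

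For (b), I would test against an arbitrary $f\in C_b(\Omega_n)$ and show $z\mapsto\int_{\Omega_n}f\,d\mu_n(z)$ is continuous on $\partial\Omega_n$. Extend $f$ to a function in $C_b(\Omega)$ (e.g.\ by multiplying the restriction of any bounded continuous extension by $\rho_n$, or just note $\rho_n f\in C_b(\R^d)$ and restrict); then for $z\in\partial\Omega_n\cap\partial\Omega$ we have $\int_{\Omega_n}f\,d\mu_n(z)=\rho_n(z)\int_\Omega (\rho_n f)\,d\mu(z)$, which is a product of the continuous function $\rho_n$ and the map $z\mapsto\langle \rho_n f,\mu(z)\rangle$, continuous by Hypothesis \ref{h.mu}(ii) since $\rho_n f\in C_b(\Omega)$. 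The only subtlety is continuity across the "interface" where $\partial\Omega_n\cap\partial\Omega$ meets $\partial B_{n+1}(0)$: there $\rho_{n}$ need not vanish in general, so I should take this opportunity to note that on $\partial\Omega_n$ lying on $\partial B_{n+1}(0)$ one has $\rho_n\equiv 0$ is \emph{not} automatic — hence one genuinely uses that the formula assigns $0$ there and checks that the two branches agree in the limit because... actually the cleanest fix is to observe that $\mu_n(z)=\rho_n(z)(\rho_n\cdot\mu(z))|_{\Omega_n}$ can be used as the definition on \emph{all} of $\partial\Omega_n$ provided we interpret $\mu(z)$ as $0$ when $z\notin\partial\Omega$; but since continuity is a local statement and the two pieces of $\partial\Omega_n$ are relatively open in it away from their common boundary, the main thing to check is that a sequence $z_k\to z$ with $z_k\in\partial\Omega_n\cap\partial\Omega$ and $z\in\partial B_{n+1}(0)\setminus\partial\Omega$ forces $\rho_n(z_k)\to 0$; this will follow from $\rho_n\leq\one_{B_{n+1}(0)}$ together with $z\in\partial B_{n+1}(0)$, since $\rho_n$ is continuous and vanishes on $\R^d\setminus B_{n+1}(0)$. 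This interface continuity is the one spot requiring a little care, so I expect it to be the main (mild) obstacle.

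For (c), fix $z\in\partial\Omega_n\cap\partial\Omega_{n+1}$ and a Borel set $A\subseteq\Omega_n\subseteq\Omega_{n+1}$. Since $\Omega_n\subsetneq\Omega_{n+1}$ and $z$ lies on both boundaries, in particular $z\in\partial\Omega$ (a point of $\partial\Omega_{n+1}$ that also lies in $\overline{\Omega_n}$ and on $\partial\Omega_n$ but not in the ball-boundary portion of $\partial\Omega_n$... more simply: if $z\in\partial\Omega_n$ with $z\notin\partial\Omega$ then $z\in\partial B_{n+1}(0)$, hence $\rho_n(z)=0$ and also $\rho_{n+1}(z)\geq 0$, and I'll handle that case too). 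If $\rho_n(z)=0$ then $\mu_n(z)=0\leq\mu_{n+1}(z)$ since the latter is positive by (a). If $\rho_n(z)>0$ then $z\in B_{n+1}(0)$, so in particular $z\notin\partial B_{n+1}(0)$, hence $z\notin\partial\Omega_n\setminus\partial\Omega$, i.e.\ $z\in\partial\Omega$, and the same reasoning gives $z\in\partial\Omega$ for $\mu_{n+1}$. Then
\[
\mu_n(z,A)=\rho_n(z)\int_A\rho_n(x)\,\mu(z,dx)\leq \rho_{n+1}(z)\int_A\rho_{n+1}(x)\,\mu(z,dx)=\mu_{n+1}(z,A),
\]
where the inequality uses $0\leq\rho_n\leq\rho_{n+1}\leq 1$ pointwise (from $\one_{B_n(0)}\leq\rho_n\leq\one_{B_{n+1}(0)}$, whence $\rho_n\leq\one_{B_{n+1}(0)}\leq\rho_{n+1}$) together with positivity of $\mu(z)$. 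This completes the proof; I would present it compactly, spending the bulk of the written argument on the interface point in (b) and otherwise treating the computations as routine.
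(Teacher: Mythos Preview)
Your proposal is correct and follows essentially the same approach as the paper: part (a) is the direct estimate using $0\leq\rho_n\leq 1$ and that $\mu(z)$ is a probability measure; part (b) tests against $f\in C_b(\Omega_n)$, uses that $\rho_n f$ extends continuously to $\Omega$, and handles the interface via $\rho_n\to 0$ at $\partial B_{n+1}(0)$; part (c) is the pointwise monotonicity $\rho_n\leq\one_{B_{n+1}(0)}\leq\rho_{n+1}$. The paper organizes (b) by a case split on $|z|<n+1$ versus $|z|=n+1$ rather than by ``interface versus interior,'' but the content is identical.
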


\begin{proof}
(a) This follows directly from the inequalities $0\leq \rho_n \leq 1$ and the fact that every $\mu(z)$ is a probability measure.

(b) Let $(z_k) \subset \partial\Omega_n$ be such that $z_k \to z$. 
If $\abs{z} < n+1$, then also $\abs{z_k} < n+1$ for all but finitely many $k$. We may thus assume that 
$(z_k) \subset \partial\Omega_n\cap\partial\Omega$ converges to $z \in \partial\Omega_n\cap \partial\Omega$. Let $f \in C_b(\Omega_n)$. 
Extending the function $f\cdot \rho_n$ by zero outside $\Omega_n$ we obtain a bounded and continuous function on all of $\Omega$. 
Thus, 
\[ \applied{f}{\mu_n(z_k)}  = \rho_n(z_k)\applied{ f\rho_n}{ \mu(z_k)} \to \rho_n(z) \applied{ f\rho_n}{ \mu(z)} =\applied{ f}{ \mu_n(z)} \]
as $k\to \infty$, by the continuity of $z \mapsto \mu (z)$ and $\rho_n$.

 If, on the other hand,
$\abs{z} = n+1$ then the convergence $\applied{ f}{ \mu_n(z_k)} \to 0 = \applied{ f}{ \mu_n(z)}$ follows from the boundedness of
the integrals $\int f \rho_n \, d\mu (z_k)$ and the fact that $\rho_n(z) \to 0$ as $z \to \partial B_{n+1}(0)$.

(c) This follows immediately from the definition, noting that the functions $\rho_n$ are pointwise increasing.
\end{proof}

It follows that the measures $\gamma=\mu_n$ satisfy Hypothesis \ref{h.bddmeasure}. Thus, we can define the operator
$A_n$ on $L^\infty(\Omega)$ as follows. We set $A_n u = \cA u$ for $u \in D(A_n)$, where
 \begin{align*}
 D(A_n) \coloneqq \Big\{ u &\in C(\overline{\Omega_n}) \cap W(\Omega_n) : \cA u \in L^\infty(\Omega_n),\\
 &\qquad u(z) = \int_{\Omega_n} u(x)\, \mu_n (z, dx)\,\, \forall\, z\in\partial\Omega_n \Big\}
 \end{align*}
It follows from Proposition \ref{p.boundeddomain}, that $(0,\infty) \subset \rho (A_n)$, and for $\lambda >0$ the operator
$R(\lambda, A_n)$ is positive  and satisfies 
$\|\lambda R(\lambda, A_n)\|\leq 1$. Given a function $f\in L^\infty(\Omega)$, we set
$u_n \coloneqq R(\lambda, A_n)f$. Here, in slight abuse of notation, we have identified $f$ with its restriction to $\Omega_n$. We will do so also in what follows. 

Note that $u_n \in D(A_n)$ so that, by the definition of the measure $\mu_n$, we have that $u_n(z) = 0$ for all $z\in \partial\Omega_n \setminus \partial\Omega \subset \partial B_{n+1}(0)$. Thus, setting
$\tilde{u}_n(x) = u_n (x)$ for $x\in \overline{\Omega_n}$ and $\tilde{u}_n(x) = 0$ for $x\in \overline{\Omega}\setminus \overline{\Omega_n}$ we obtain a continuous function on all of $\overline{\Omega}$. In what follows, we will not distinguish between $u_n$ and its extension $\tilde u_n$ to $\overline{\Omega}$.

We will show that the approximative solutions $u_n$ converge to a solution of Problem \eqref{eq.ell}
on the unbounded domain $\Omega$. We prepare this by the following two lemmas in which the fact that $u_n$ is the resolvent of $A_n$ applied to $f$ is not important. We therefore formulate them in greater generality.

\begin{lem}
\label{lem:subsequencesolution}
Assume Hypothesis \ref{h.coeff} and let $u_n \in C(\overline{\Omega_n})\cap W(\Omega_n)$ be a uniformly bounded sequence, say $\norm{u_n}_\infty \leq M$ for all $n\in \N$,
	such that for every $m\in \N$ the sequence $(\cA u_n)_{n\geq m}$ is uniformly bounded on the set $\Omega_m$. We moreover assume that there exists a function $g\colon \Omega \to \R$ such that for every $m\in \N$ the sequence
	$(\cA u_n)_{n\geq m}$ converges pointwise almost everywhere on $\Omega_m$ to $g$. 

	Then $(u_n)$ possesses a subsequence that converges locally uniformly and in $W^{2,p}_\loc(\Omega)$ for all $p\in (1,\infty)$
	to a function $u\in C_b(\Omega)\cap W(\Omega)$ such that $\cA u =g$.
\end{lem}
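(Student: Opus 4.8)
\textbf{Proof strategy for Lemma \ref{lem:subsequencesolution}.}

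The plan is to use interior elliptic estimates together with a diagonal argument over the exhausting sequence $\Omega_m$. First I would fix $m\in\N$ and work on a slightly smaller domain, say on $\Omega_{m-1}$ with $\overline{\Omega_{m-1}}\subset\Omega_m$ (adjusting indices so the closures are compactly contained), and apply the interior $L^p$-estimate \cite[Theorem 9.11]{gt}: for any $p\in(1,\infty)$ and any pair of concentric balls compactly contained in $\Omega_m$, the $W^{2,p}$-norm of $u_n$ on the smaller ball is controlled by the $L^p$-norm of $\cA u_n$ on the larger ball plus the $L^p$-norm of $u_n$ itself. By the hypotheses, both $\norm{u_n}_\infty\le M$ and $\sup_{n\ge m}\norm{\cA u_n}_{L^\infty(\Omega_m)}<\infty$, so covering a given relatively compact open subset of $\Omega_m$ by finitely many such balls yields a uniform bound on $\norm{u_n}_{W^{2,p}(K)}$ for every $K\Subset\Omega$ and every $p\in(1,\infty)$ and every $n$ large enough.

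Next I would extract the convergent subsequence. Fixing $p>d$, the uniform $W^{2,p}_\loc$-bound combined with the compact embedding $W^{2,p}(K)\hookrightarrow C^1(\overline{K})$ (Rellich--Kondrachov) gives, for each fixed relatively compact $K$, a subsequence converging in $C^1(\overline{K})$ and weakly in $W^{2,p}(K)$. A Cantor diagonal argument over an exhaustion $K_1\Subset K_2\Subset\cdots$ of $\Omega$ with $\bigcup_j K_j=\Omega$ produces a single subsequence, still denoted $(u_n)$, and a function $u$ such that $u_n\to u$ locally uniformly (in fact in $C^1_\loc$) and $u_n\weak u$ in $W^{2,p}_\loc(\Omega)$ for every $p\in(1,\infty)$. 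Since $\norm{u_n}_\infty\le M$, the limit satisfies $\norm{u}_\infty\le M$, hence $u\in C_b(\Omega)$.

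It remains to identify $\cA u=g$, which I expect to be the main (though still routine) obstacle, since it requires passing to the limit in the second-order term. Fix $m$ and a test function; for the weakly convergent subsequence, the second derivatives $D_iD_j u_n\weak D_iD_j u$ in $L^p(\Omega_m)$, while the coefficients $a_{ij}\in C(\overline\Omega)$ and $b_j\in L^\infty_\loc$ act as fixed multipliers, so $\cA u_n\weak \cA u$ in $L^p_\loc(\Omega)$. On the other hand $\cA u_n\to g$ pointwise a.e.\ on each $\Omega_m$ and is uniformly bounded there, so by dominated convergence $\cA u_n\to g$ in $L^p(\Omega_m)$; uniqueness of weak limits forces $\cA u=g$ a.e.\ on $\Omega_m$, and since $m$ was arbitrary, $\cA u=g$ on $\Omega$. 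Finally, $g\in L^\infty_\loc(\Omega)$ (being the a.e.-limit of the uniformly bounded sequences $\cA u_n$ on each $\Omega_m$), so $\cA u\in L^\infty_\loc(\Omega)$ and the elliptic regularity statement recalled after Hypothesis \ref{h.mu} (i.e.\ \cite[Lemma 9.16]{gt}) gives $u\in W(\Omega)$, completing the proof.
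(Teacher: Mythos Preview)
Your approach is essentially correct and close to the paper's, but there is one small gap: the lemma asserts that the subsequence converges \emph{strongly} in $W^{2,p}_\loc(\Omega)$, whereas you only extract weak $W^{2,p}_\loc$ convergence. Once you have established $\cA u=g$ and that $u_n\to u$ locally uniformly, this is easily repaired: apply the interior estimate \cite[Theorem~9.11]{gt} to the difference $u_n-u$ (or, as the paper does, directly to $u_n-u_m$) and use that $\cA u_n\to g=\cA u$ and $u_n\to u$ in $L^p_\loc$ by dominated convergence to conclude $\|u_n-u\|_{W^{2,p}(K)}\to 0$ for every $K\Subset\Omega$.

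Apart from this, the two arguments differ only in packaging. The paper obtains the $C^1_\loc$ bound from an external estimate \cite[Proposition~3.4]{akk16} and then invokes Arzel\`a--Ascoli, whereas you derive it from the $W^{2,p}_\loc$ bound via the compact Sobolev embedding for $p>d$; both routes work. The paper then applies the interior estimate to $u_n-u_m$ to show the subsequence is Cauchy in $W^{2,p}(K)$, which yields strong convergence and the identification $\cA u=g$ in one stroke; you instead pass through weak $W^{2,p}$ limits and identify $\cA u=g$ via uniqueness of weak limits plus dominated convergence. Your route is slightly more self-contained (no appeal to \cite[Proposition~3.4]{akk16}), while the paper's Cauchy-sequence argument delivers the strong $W^{2,p}_\loc$ convergence required by the statement without the extra upgrade step.
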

\begin{proof}
For any $U\Subset \Omega$ we may choose $n_0 \in \N$ such that $U\Subset \Omega_{n_0}$ 
and thus conclude from \cite[Proposition 3.4]{akk16} that there is a constant $C=C(U)$ such that
\begin{align*}
\norm{u_n}_{C^1(U)} &\leq C\big(\norm{u_n - \cA u_n}_{L^\infty(\Omega_{n_0})} + \norm{u_n|_{\partial \Omega_n}}_{C(\partial \Omega_{n_0})} \big)\\
&\leq C\big(2M + \norm{\cA u_n}_{L^\infty(\Omega_{n_0})}\big)<\infty
\end{align*}
for all $n\geq n_0$. By exhausting $\Omega$ with increasing sets $U\Subset \Omega$, 
it follows from the Arzel\`a--Ascoli theorem and a diagonal argument that a subsequence of $u_n$ 
(which, for ease of notation, we denote by $u_n$ again) converges locally uniformly on $\Omega$ to some function $u\in C_b(\Omega)$.
Moreover, given $p\in (1,\infty)$ we obtain from \cite[Theorem 9.11]{gt} that there is a constant $C= C(p, U, \cA, n_0)$ such that
\[
\norm{u_n}_{W^{2,p}(U)} \leq C\big( \norm{\cA u_n}_{L^p(\Omega_{n_0})} + \norm{u_n}_{L^p(\Omega_{n_0})}\big)
\]
holds for all $n\geq n_0$.
Applying this estimate to the difference $u_n-u_m$, it follows from the above by dominated convergence that $(u_n)$ is a Cauchy sequence in $W^{2,p}(U)$.
Since $U$ and $p\in(1,\infty)$ were arbitrary, it now follows that $u \in W(\Omega)$
and that $(u_n)$ along with its first and second derivatives converge in $L^p_\loc(\Omega)$ 
for any $p\in (1,\infty)$. By the structure of $\cA$, this shows that also $\cA u_n \to \cA u$ in $L^p_{\loc}(\Omega)$
and therefore $\cA u = g$.
\end{proof}

Lemma \ref{lem:subsequencesolution} allows us to prove that the solutions of our auxiliary problems converge locally uniformly and in $W^{2,p}_\mathrm{loc}(\Omega)$ to a function in $C_b(\Omega)\cap W(\Omega)$. It is an important question whether one can extend this limit to a continuous function on the closure $\overline{\Omega}$. The next lemma provides a sufficient condition for this.

\begin{lem}
\label{lem:continuous-up-to-the-boundary}
	Let $(u_n) \subset C(\overline{\Omega})$ be a sequence such that $0\leq u_n \leq u_{n+1}$ and $u_n|_{\Omega_n} \in W(\Omega_n)$ for every $n\in\N$.
	Define $u(x) \coloneqq \sup_{n\in\N} u_n(x)$ for $x\in \overline{\Omega}$ 
	and suppose that $u|_\Omega \in C_b(\Omega)\cap W(\Omega)$ and $u|_{\partial \Omega} \in C_b(\partial \Omega)$.
	Finally, assume that  there is $\lambda>0$ such that $\lambda u_n - \cA u_n \leq \lambda u - \cA u$ on $\Omega_n$ for every $n\in\N$. Then $u\in C_b(\overline{\Omega})$.
\end{lem}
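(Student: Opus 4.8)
The plan is to show that $u$ is simultaneously lower and upper semicontinuous on $\overline\Omega$. Lower semicontinuity comes for free: being a supremum of the continuous functions $u_n$, $u$ is lower semicontinuous on $\overline\Omega$, and it is bounded since $\sup_{\overline\Omega}u=\max\{\sup_{\Omega}u,\sup_{\partial\Omega}u\}<\infty$ by hypothesis. Since $u|_\Omega$ is continuous, $u$ is already continuous on the open set $\Omega$, so the only thing left is upper semicontinuity at a point $z_0\in\partial\Omega$; and since $u|_{\partial\Omega}$ is continuous, this reduces to establishing
\[
\limsup_{\Omega\ni x\to z_0}u(x)\ \le\ u(z_0).
\]

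To prove this estimate I would work with a localized maximum principle on a small ball around $z_0$. Fix $z_0\in\partial\Omega$ and $\eps>0$. Using that the coefficients of $\cA$ do not explode near $\partial\Omega$ (Hypothesis \ref{h.coeff}) and the continuity of $u|_{\partial\Omega}$, choose $\rho>0$ so small that the $a_{ij},b_j$ are bounded on the compact set $\overline{\Omega\cap B_\rho(z_0)}$ and $u(z)\le u(z_0)+\eps$ for all $z\in\partial\Omega\cap\overline{B_\rho(z_0)}$. Put $V:=\Omega\cap B_\rho(z_0)$; as an intersection of two Dirichlet regular sets (cf.\ \cite[Lemma 3.5]{ab99}) this is a bounded Dirichlet regular set to which the bounded-domain theory of Section \ref{s.bdd} applies, and $V\subseteq\Omega_n$ once $B_\rho(z_0)\subset B_{n+1}(0)$. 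Write $\partial V=\Gamma_1\cup\Gamma_2$ with $\Gamma_1:=\partial\Omega\cap\overline{B_\rho(z_0)}$ and $\Gamma_2:=\overline\Omega\cap\partial B_\rho(z_0)$, set $M:=\sup_{\overline\Omega}u$, and let $v\in C(\overline V)\cap W(V)$ be the solution of the Dirichlet problem
\[
\lambda v-\cA v=\lambda u-\cA u\ \text{ in } V,\qquad v=\phi\ \text{ on }\partial V,
\]
where $\phi\in C(\partial V)$ is chosen with $\phi\ge u(z_0)+\eps$ on $\Gamma_1$, $\phi\equiv M$ on $\Gamma_2$ and $\phi(z_0)=u(z_0)+\eps$ (for instance $\phi(x)=(u(z_0)+\eps)+(M-u(z_0)-\eps)\,|x-z_0|/\rho$). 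By construction $\phi\ge u_n$ on $\partial V$ for every $n$.

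Now I would run the comparison. For $n$ large enough that $V\subseteq\Omega_n$, the function $w:=v-u_n$ lies in $C(\overline V)\cap W(V)$ and satisfies, on $V$,
\[
\lambda w-\cA w=(\lambda u-\cA u)-(\lambda u_n-\cA u_n)\ \ge\ 0
\]
by the hypothesis, while $w=\phi-u_n\ge 0$ on $\partial V$. The weak maximum principle for $\lambda-\cA$ on the bounded domain $V$ — available because the coefficients of $\cA$ are bounded on $\overline V$ (see Lemma \ref{l.cmp} and the argument of Lemma \ref{lem:maxprinz}) — then forces $w\ge 0$ on $V$, i.e.\ $u_n\le v$ on $V$. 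Passing to the supremum over $n$ (using that the $u_n$ increase) gives $u\le v$ on $V$, and since $v\in C(\overline V)$ with $v(z_0)=u(z_0)+\eps$ we conclude $\limsup_{\Omega\ni x\to z_0}u(x)\le u(z_0)+\eps$. Letting $\eps\downarrow 0$ yields the desired estimate, and combined with the first paragraph this gives $u\in C_b(\overline\Omega)$.

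I expect the main obstacle to be the construction of the comparison function $v$: one needs the Dirichlet problem for $\lambda-\cA$ on the mixed-boundary domain $V=\Gamma_1\cup\Gamma_2$ to be solvable with a solution that is continuous up to $\overline V$, which hinges on $V$ being bounded and Dirichlet regular and, crucially, on the fact that near $\partial\Omega$ neither the coefficients of $\cA$ nor the right-hand side $\lambda u-\cA u$ degenerate or blow up, so that the bounded-domain results of Section \ref{s.bdd}/\cite{akk16} and the maximum principles genuinely apply. A further point requiring care is that the boundary datum $\phi$ must be chosen to dominate \emph{all} the $u_n$ simultaneously, and that the comparison is carried out with the $u_n$ — each of which is continuous on all of $\overline\Omega$ — rather than with $u$ itself, whose boundary behaviour near $z_0$ is precisely what is in question.
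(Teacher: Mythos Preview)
Your argument is correct and follows the same strategy as the paper's: lower semicontinuity comes for free, and upper semicontinuity at a boundary point is obtained by solving a Dirichlet problem on a bounded subdomain with continuous boundary data dominating all the $u_n$, applying the maximum principle to get $u_n\le v$, and taking the supremum over $n$. The only notable difference is the choice of comparison domain. The paper works on the large approximating set $\Omega_m=\Omega\cap B_{m+1}(0)$ (so that Hypothesis~\ref{h.bddcoefficients}(ii) is available directly from Hypothesis~\ref{h.coeff}) and then spends a paragraph constructing a continuous $\phi\in C(\partial\Omega_m)$ that equals $u$ on a neighbourhood of $z$ in $\partial\Omega$ and dominates $u$ elsewhere; this requires a Tietze extension and a case-by-case continuity check. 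Your localization to the small set $V=\Omega\cap B_\rho(z_0)$ trades that complication for a one-line explicit $\phi$ and an extra $\eps\downarrow 0$. The only point you should make explicit is that in case~(ii) of Hypothesis~\ref{h.coeff} the uniform exterior cone condition transfers to $V$: for points of $\partial\Omega\cap\overline{B_\rho(z_0)}$ this follows from the assumption on $\Omega\cap B_n(0)$ for $n$ large, and the smooth sphere $\partial B_\rho(z_0)$ supplies the rest.
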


\begin{proof}
	As a supremum of continuous functions, $u$ is lower semi-continuous. Since $u$ is assumed to be continuous in 
	$\Omega$, it remains to show continuity of $u$ on $\partial \Omega$. 
	To that end, let $z \in \partial \Omega$ and $(z_n) \subset \overline{\Omega}$ be a sequence converging to $z$.
Pick an index $m\in \N$ such that $z \in \partial \Omega\cap B_m(0)$, so that $\Omega_m$ contains a neighborhood
of $z$.\smallskip 

	As an auxiliary result, let us first show that  we can find a function $\phi \in C(\partial \Omega_m)$ with 
	$\phi \geq u$ on $\partial \Omega_m$ and $\varphi (s) = u(s)$ for all $s \in \partial \Omega_m \cap \overline{B}_m(0)$. We set
	 $\tilde \phi(s) \coloneqq u(s)$ for all $s\in C_1\coloneqq \partial \Omega_m \cap \overline{B}_m(0)$.
	Since $C_1\subset \partial \Omega \cap \overline{B}_m(0)$ and $u\in C_b(\partial \Omega)$, it follows that 
	$\tilde \varphi \in C_b(C_1)$. Now let $C_2 \coloneqq \partial \Omega_m \setminus B_{m+1}(0)$ and $M=\max_{x\in\overline{\Omega_m}}u(x)$. For $s \in C_2$ 
	 we set $\tilde \phi(s) \coloneqq M$, thus obtaining a continuous function $\tilde\varphi$ on the closed set $C_1\cup C_2$.
	 Using the Tietze extension theorem, we can extend $\tilde \phi$ to a continuous function on $\partial \Omega_m$.
	Finally, we define $\phi(s) \coloneqq \max\{\tilde \phi(s), u(s)\}$ for all $s\in \partial \Omega_m$. Clearly, $\phi$
	is continuous in  $s\in \partial\Omega_m \setminus B_{m+1}(0)$, as for such $s$, we find a neighborhood of $s$ contained in $\partial \Omega_m$, where $\phi$ is continuous as maximum of two continuous functions. Moreover, $\phi$ is continuous in $C_2\setminus \partial\Omega$, as there $\phi\equiv\tilde \phi$. It remains to consider $s\in \partial\Omega_m\cap\partial B_{m+1}(0)$. Note that $\phi (s) = M$ for such an $s$. If $u(s) <M$, then $\phi = \tilde\phi$ in a neighborhood of $s$, proving that $\phi$ is continuous in $s$. If, on the other hand, $u(s) = M$, then $u(z) \to u(s)$ as $z\to s$ in $\partial\Omega\cap B_{m+1}(0)$, hence also $\phi (z) \to \phi (s)$ as $z\to s$ in $\partial \Omega\cap B_{m+1}(0)$. But this also hold
	if $z\to s$ in $\partial\Omega_m \setminus B_{m+1}(0)$, as there $\phi \equiv M$. This shows that, altogether, $\phi$ is a continuous function on $\partial\Omega_m$ which, by construction, has all other desired properties.\smallskip

	By \cite[Proposition 3.3]{akk16} we find $w\in C(\overline{\Omega_m})\cap W(\Omega_m)$ such that $\lambda w-\cA w = \lambda u-\cA u$ on $\Omega_m$ and
	$w=\phi$ on $\partial \Omega_m$. 
	Note that for each $n\geq m$ we have $u_n \leq u \leq \phi$ on $\partial \Omega_m$ and
	$\lambda u_n - \cA u_n \leq \lambda u-\cA u$ on $\Omega_m$. It thus follows from Lemma \ref{lem:maxprinz}, applied with $\gamma = 0$, that
	$u_n \leq w$ on $\overline{\Omega_m}$ and hence $u \leq w$  on $\overline{\Omega_m}$.

	Now observe that 
	\[ \limsup_{n\to\infty} u(z_n) \leq \limsup_{n\to\infty} w(z_n) = w(z) = \phi(z) = u(z).\]
	As $z\in \partial \Omega$ was arbitrary, this shows that $u$ is also upper semi-continuous and hence $u\in C(\overline{\Omega})$.
\end{proof}

We can now prove the main result of this section.

\begin{thm}\label{t.elliptic}
For every $\lambda>0$, $(R(\lambda,A_n))_{n\in\N}$ is an increasing sequence of positive operators on $L^\infty(\Omega)$
such that for every $f \in L^\infty(\Omega)$ the sequence $R(\lambda, A_n)f$ converges locally uniformly in 
$\Omega$ to a function $R(\lambda) f \in D_{\mathrm{max}}$ satisfying
$\norm{\lambda R(\lambda)f}_{\infty}\leq \norm{f}_{\infty}$ and $(\lambda - \cA)R(\lambda )f = f$. Moreover, $R(\lambda)$
is a positive operator, i.e.\ for $f\geq 0$ we also have $R(\lambda)f\geq 0$.
\end{thm}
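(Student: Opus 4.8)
The plan is to construct $R(\lambda)$ as the monotone limit of the resolvents $R(\lambda,A_n)$ from Proposition \ref{p.boundeddomain} and to transfer their properties to the limit using Lemmas \ref{lem:subsequencesolution} and \ref{lem:continuous-up-to-the-boundary}. Fix $\lambda>0$; since every $R(\lambda,A_n)$ is linear, it is enough to analyse $f\in L^\infty(\Omega)_+$ and then define $R(\lambda)f\coloneqq R(\lambda)f^+-R(\lambda)f^-$. Write $u_n\coloneqq R(\lambda,A_n)f$ for the (nonnegative, continuous) extension by $0$ to $\overline\Omega$ as in the discussion preceding the theorem; by Proposition \ref{p.boundeddomain}(a) it satisfies $\|\lambda u_n\|_\infty\leq\|f\|_\infty$. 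First I would prove the monotonicity $u_n\leq u_{n+1}$. Setting $w\coloneqq u_n-u_{n+1}|_{\overline{\Omega_n}}\in C(\overline{\Omega_n})\cap W(\Omega_n)$, one has $(\lambda-\cA)w=0$ on $\Omega_n$ because $\lambda u_k-\cA u_k=f$ on $\Omega_k$ for $k=n,n+1$. On $\partial\Omega_n$ I would combine the boundary identity $u_n(z)=\langle u_n,\mu_n(z)\rangle$ with the inequality $u_{n+1}(z)\geq\langle u_{n+1},\mu_n(z)\rangle$ to get $w(z)\leq\langle w,\mu_n(z)\rangle$, and then conclude $w\leq 0$ from the maximum principle Lemma \ref{lem:maxprinz} applied on $U=\Omega_n$ with $\gamma=\mu_n$ (admissible by Lemma \ref{l.approxmeas} and Hypothesis \ref{h.bddmeasure}); outside $\overline{\Omega_n}$ the inequality $u_n=0\leq u_{n+1}$ is trivial. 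The inequality $u_{n+1}(z)\geq\langle u_{n+1},\mu_n(z)\rangle$ is seen by a case split: if $\rho_n(z)=0$ its right-hand side vanishes and it reduces to $u_{n+1}\geq0$; if $\rho_n(z)>0$ then $z\in\partial\Omega\cap\partial\Omega_{n+1}$ and
\[
u_{n+1}(z)=\rho_{n+1}(z)\!\int_\Omega\rho_{n+1}u_{n+1}\,d\mu(z)\ \geq\ \rho_n(z)\!\int_\Omega\rho_nu_{n+1}\,d\mu(z)=\langle u_{n+1},\mu_n(z)\rangle,
\]
using $0\leq\rho_n\leq\rho_{n+1}\leq 1$ and $u_{n+1}\geq0$. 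This shows that $(R(\lambda,A_n))$ is an increasing sequence of positive operators.

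Next I would pass to the limit in the interior. Monotonicity together with the uniform bound yields a pointwise limit $u\coloneqq\sup_n u_n$ with $\|\lambda u\|_\infty\leq\|f\|_\infty$. On each $\Omega_m$ and for $n\geq m$ we have $\cA u_n=\lambda u_n-f$, which is uniformly bounded on $\Omega_m$ and converges a.e.\ to $g\coloneqq\lambda u-f$; hence Lemma \ref{lem:subsequencesolution} produces a subsequence of $(u_n)$ converging locally uniformly and in $W^{2,p}_\loc(\Omega)$ to a function in $C_b(\Omega)\cap W(\Omega)$ whose $\cA$-image is $g$. Since that limit coincides pointwise with $u$, we obtain $u\in C_b(\Omega)\cap W(\Omega)$ and $(\lambda-\cA)u=f$ on $\Omega$, and — running this argument for an arbitrary subsequence — that the full sequence $u_n\to u$ locally uniformly. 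For general $f$ the claimed properties (linearity of $R(\lambda)$, positivity, $(\lambda-\cA)R(\lambda)f=f$, and $\|\lambda R(\lambda)f\|_\infty\leq\|f\|_\infty$) follow by the reduction to $f^\pm$ and by taking pointwise limits in the corresponding statements for $R(\lambda,A_n)$.

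It remains to show $R(\lambda)f\in D_{\mathrm{max}}$; as $D_{\mathrm{max}}$ is a linear subspace I would again restrict to $f\geq0$. The key observation is that for $z\in\partial\Omega$ the quantities $\rho_n(z)$, $\rho_n$ and $u_n$ increase to $1$, $1$ and $u$, so monotone convergence gives $u_n(z)=\rho_n(z)\int_\Omega\rho_nu_n\,d\mu(z)\to\int_\Omega u\,d\mu(z)$, i.e.\ $u(z)=\langle u,\mu(z)\rangle$ for every $z\in\partial\Omega$. Because $u|_\Omega\in C_b(\Omega)$ and $z\mapsto\mu(z)$ is $\sigma(\mathscr{M}(\Omega),C_b(\Omega))$-continuous by Hypothesis \ref{h.mu}, this forces $u|_{\partial\Omega}\in C_b(\partial\Omega)$. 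Now all hypotheses of Lemma \ref{lem:continuous-up-to-the-boundary} are in place ($0\leq u_n\leq u_{n+1}$, $u_n|_{\Omega_n}\in W(\Omega_n)$, $u|_\Omega\in C_b(\Omega)\cap W(\Omega)$, $u|_{\partial\Omega}\in C_b(\partial\Omega)$, and $\lambda u_n-\cA u_n=f=\lambda u-\cA u$ on $\Omega_n$), so $u\in C_b(\overline\Omega)$. Together with $\cA u=\lambda u-f\in L^\infty(\Omega)$ and the boundary identity this gives $u\in D_{\mathrm{max}}$, and the general case follows by linearity.

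The step I expect to be the main obstacle is the monotonicity argument, precisely because the two boundary conditions live on the different domains $\Omega_n\subset\Omega_{n+1}$: a point of $\partial\Omega_n$ lying on the sphere $\partial B_{n+1}(0)$ need not belong to $\partial\Omega_{n+1}$, which is exactly why the cut-offs $\rho_n$ are built into $\mu_n$ (forcing $\mu_n(z)$ to be the zero measure there) and what dictates the case split above. A secondary subtlety, in the boundary step, is that one must identify $u|_{\partial\Omega}$ with $\langle u,\mu(\cdot)\rangle$ \emph{before} the weak continuity of $\mu$ can be invoked to conclude that the boundary values are continuous.
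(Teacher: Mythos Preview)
Your proposal is correct and follows essentially the same route as the paper's proof: reduce to $f\geq 0$, prove monotonicity of $(u_n)$ via Lemma~\ref{lem:maxprinz} by comparing the two boundary conditions, pass to the interior limit with Lemma~\ref{lem:subsequencesolution}, verify the nonlocal boundary identity by monotone convergence, and then upgrade to continuity on $\overline{\Omega}$ via Lemma~\ref{lem:continuous-up-to-the-boundary}. The only cosmetic difference is that the paper splits the boundary analysis according to whether $|z|<n+1$ or $|z|=n+1$ and invokes Lemma~\ref{l.approxmeas}(c) ($\mu_n(z)\leq\mu_{n+1}(z)$) directly, whereas you split according to $\rho_n(z)=0$ versus $\rho_n(z)>0$ and re-derive that inequality from the explicit formula; these are equivalent.
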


\begin{proof}
Fix $f\in L^\infty(\Omega)$ and $\lambda >0$.
Let us first assume that $f \geq 0$. We consider the approximate operators $A_n$ introduced above and set $u_n \coloneqq R(\lambda, A_n)f$. Extending it by $0$ outside $\Omega_n$, we consider $u_n$ as a continuous function on all of $\overline{\Omega}$. It follows from Proposition \ref{p.boundeddomain} that $u_n \geq 0$ and $\|\lambda u_n\|_\infty\leq \|f\|_\infty$.

We claim that $0\leq u_n \leq u_{n+1}$ on $\Omega_n$. To see this, put $v= u_n - u_{n+1}$. If $z \in \partial \Omega_n$
satisfies $|z|<n+1$, then also $z \in \partial\Omega_{n+1}$. Using that $u_n$ and $u_{n+1}$ satisfy the boundary condition,
we find
\[ v(z) = \applied{u_n}{\mu_n(z)} - \applied{u_{n+1}}{\mu_{n+1}(z)} \leq \applied{u_n}{\mu_n(z)}  - \applied{u_{n+1}}{\mu_n(z)} = \applied{v}{\mu_n(z)}\]
since $u_{n+1}\geq 0$ and $\mu_n(z) \leq \mu_{n+1}(z)$. If $z\in \partial \Omega_n\cap \partial B_{n+1}(0)$, then
$\mu_{n}(z) = 0$ and also in this case we find $v(z) \leq \langle v, \mu_n(z)\rangle$.
Moreover, on $\Omega_n$ we have
\[ \lambda v-\cA v = \lambda u_n - \cA u_n - (\lambda u_{n+1} - \cA u_{n+1}) = f-f = 0\]
almost everywhere, which shows that $\cA v$ possesses the continuous representative $\lambda v$.
Consequently, by Lemma \ref{lem:maxprinz}, we have $v\leq 0$ as claimed.

We now define $u(x) \coloneqq \sup_{n\in\N} u_n(x)$ for $x\in \overline{\Omega}$. Since $\cA u_n  = \lambda u_n - f$ on $\Omega_n$,
it follows that for every $m \in \N$ the sequence $(\cA u_n)_{n\geq m}$ is uniformly bounded on $\Omega_m$ and converges pointwise to $\lambda u-f$. Thus, we conclude from Lemma \ref{lem:subsequencesolution}
that $u\in C_b(\Omega)\cap W(\Omega)$ and that $\cA u = \lambda u-f$.
Now put
\[ \varphi(z) \coloneqq \int_{\Omega} u(x)\mu (z, dx)  \]
for $z\in \partial \Omega$. Since $u\in C_b(\Omega)$, the function $\varphi$ belongs to $C_b(\partial \Omega)$ by the continuity of $z\mapsto \mu(z)$.
Moreover, using monotone convergence, we find
\[
u(z) = \sup_{n\in \N} u_n(z) = \sup_{n\in \N} \applied{ u_n}{ \mu_n(z)} 
= \sup_{n\in\N} \rho_n(z) \applied{ u_n\rho_n}{ \mu (z)} = \applied{ u}{ \mu(z)} = \varphi (z)
\]
for all $z\in \partial\Omega$. In particular, $u$ is continuous on $\partial \Omega$. 
Therefore, we obtain from Lemma \ref{lem:continuous-up-to-the-boundary} that $u\in C_b(\overline{\Omega})$.

Now let $f \in L^\infty(\Omega)$ be real valued. 
We have $f= f^+- f^-$. Then $R(\lambda, A_n)f= R(\lambda, A_n)f^+ - R(\lambda, A_n)f^-$. By the above, $u_n^\pm \coloneqq R(\lambda, A_n)f^\pm$ converges locally uniformly to a function 
$u^\pm \in C(\overline{\Omega}) \cap W(\Omega)$ with $u^\pm(z) = \applied{ u^\pm}{ \mu (z)}$ for all $z\in \partial \Omega$
and $\lambda u^\pm - \cA u^\pm = f^\pm$. Consequently, $R(\lambda, A_n)f$ converges locally uniformly 
to $u \coloneqq u^+-u^-$, which is an element of $D_{\mathrm{max}}$ and solves $\lambda u - \cA u = f$. The case of a complex valued $f$ can be handled similarly, decomposing $f= \Re f + i \Im f$. 
\end{proof}

We next want to define the realization $A_\mu$ of the differential operator $\mathscr{A}$ that appears in Theorem \ref{t.m1}. To that end, we first prove that the operators $R(\lambda)$, constructed in Theorem \ref{t.elliptic}, form a pseudoresolvent.

\begin{lem}\label{l.pseudoresolvent}
Let for $\lambda >0$ the operator $R(\lambda) \in \mathscr{L}(L^\infty(\Omega))$ be given as in Theorem \ref{t.elliptic}.
Then we have:
\begin{enumerate}
[(a)]
\item For $\lambda >0$, the operator $R(\lambda)$ is an adjoint operator.
\item 
The family $(R(\lambda))_{\lambda >0}$ is a pseudoresolvent, i.e.\ we have
\[
R(\lambda_1) - R(\lambda_2) = (\lambda_2 - \lambda_1)R(\lambda_1) R(\lambda_2).
\]
\end{enumerate}
\end{lem}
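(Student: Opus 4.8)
The plan is to prove both parts by passing to the limit in the corresponding statements for the approximating operators $A_n$, exploiting the monotone approximation results of Section~\ref{s.semigroup}. Throughout I regard each $R(\lambda,A_n)$ as an operator on $L^\infty(\Omega)$ in the way already used for the $u_n$ (restrict the argument to $\Omega_n$, solve on $\Omega_n$, extend the solution by $0$), noting that since restriction to $\Omega_n$ followed by extension by $0$ is the identity on $L^\infty(\Omega_n)$, the resolvent identity satisfied by $R(\lambda,A_n)$ as a genuine resolvent transfers verbatim to the induced operators on $L^\infty(\Omega)$. For part~(a) I would first check that each $R(\lambda,A_n)$ is weak$^*$-continuous on $L^\infty(\Omega)$: on $L^\infty(\Omega_n)$ it is a strong Feller operator by Proposition~\ref{p.boundeddomain}(c), hence an adjoint operator by Lemma~\ref{l.amadjoint}, while the restriction map $L^\infty(\Omega)\to L^\infty(\Omega_n)$ and the extension-by-zero map $L^\infty(\Omega_n)\to L^\infty(\Omega)$ are weak$^*$-continuous, being the Banach-space adjoints of, respectively, extension by zero $L^1(\Omega_n)\to L^1(\Omega)$ and restriction $L^1(\Omega)\to L^1(\Omega_n)$; a composition of weak$^*$-continuous operators is weak$^*$-continuous.

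Next, since $L^1(\Omega)$ is a separable KB-space and, by Theorem~\ref{t.elliptic}, the operators $\lambda R(\lambda,A_n)$ form an increasing sequence of positive contractions in $\mathscr{L}(L^\infty(\Omega),\sigma^*)$, Lemma~\ref{l.increase}(1) produces a positive contraction $U\in\mathscr{L}(L^\infty(\Omega),\sigma^*)$ with $Uf=\sup_n\lambda R(\lambda,A_n)f$ for every $f\in L^\infty(\Omega)_+$. For such $f$ the sequence $(R(\lambda,A_n)f)_n$ increases, so its order supremum in $L^\infty(\Omega)$ is its pointwise almost everywhere limit, which by Theorem~\ref{t.elliptic} equals $\lambda R(\lambda)f$. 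Hence $U$ and $\lambda R(\lambda)$ agree on the positive cone, and, being linear, on all of $L^\infty(\Omega)$; thus $R(\lambda)=\lambda^{-1}U$ is an adjoint operator.

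For part~(b), fix $f\in L^\infty(\Omega)_+$ and distinct $\lambda_1,\lambda_2>0$ (nothing is to be proved for $\lambda_1=\lambda_2$). The transferred resolvent identity reads
\[
R(\lambda_1,A_n)R(\lambda_2,A_n)f=\frac{1}{\lambda_2-\lambda_1}\bigl(R(\lambda_1,A_n)f-R(\lambda_2,A_n)f\bigr),
\]
and by Theorem~\ref{t.elliptic} the right-hand side converges pointwise on $\Omega$ to $(\lambda_2-\lambda_1)^{-1}\bigl(R(\lambda_1)f-R(\lambda_2)f\bigr)$. On the other hand, using part~(a) I would apply Lemma~\ref{l.increase}(2) to the increasing sequences of positive contractions $\lambda_1R(\lambda_1,A_n)\uparrow\lambda_1R(\lambda_1)$ and $\lambda_2R(\lambda_2,A_n)\uparrow\lambda_2R(\lambda_2)$, obtaining $R(\lambda_1,A_n)R(\lambda_2,A_n)f\uparrow R(\lambda_1)R(\lambda_2)f$ in $L^\infty(\Omega)$. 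Since the order supremum of an increasing sequence in $L^\infty(\Omega)$ is its pointwise almost everywhere limit, comparing the two computations gives $R(\lambda_1)R(\lambda_2)f=(\lambda_2-\lambda_1)^{-1}\bigl(R(\lambda_1)f-R(\lambda_2)f\bigr)$ in $L^\infty(\Omega)$; this is the resolvent equation for $f\geq0$, and linearity extends it to all of $L^\infty(\Omega)$.

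The one step requiring genuine care is the identification of $\lim_n R(\lambda_1,A_n)R(\lambda_2,A_n)f$ with $R(\lambda_1)R(\lambda_2)f$: the two factors are controlled only separately and converge merely locally uniformly rather than in norm, so estimating the product head-on fails. The device above circumvents this by first rewriting the product as a \emph{difference}, whose limit Theorem~\ref{t.elliptic} hands over directly, and then invoking the monotone product convergence of Lemma~\ref{l.increase}(2) to pin the limit down. Alternatively one can bypass Lemma~\ref{l.increase}(2) by a squeezing argument: with $g:=R(\lambda_2)f$ and $g_n:=R(\lambda_2,A_n)f$ one has $g_n\uparrow g$ pointwise on $\overline\Omega$ with all functions continuous, hence uniformly on each compact $\overline{\Omega_m}$ by Dini's theorem, so $R(\lambda_1,A_m)g_n\to R(\lambda_1,A_m)g$ uniformly on $\overline{\Omega_m}$; combining this with the monotonicity of $(R(\lambda_1,A_n))_n$ and Theorem~\ref{t.elliptic} squeezes $R(\lambda_1,A_n)g_n$ between $\sup_m R(\lambda_1,A_m)g=R(\lambda_1)g$ from below and $R(\lambda_1,A_n)g\to R(\lambda_1)g$ from above, forcing $R(\lambda_1,A_n)g_n\to R(\lambda_1)g$ pointwise.
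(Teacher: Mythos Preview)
Your proof is correct and follows essentially the same route as the paper: for (a) you invoke Proposition~\ref{p.boundeddomain}(c) and Lemma~\ref{l.amadjoint} to see that each $R(\lambda,A_n)$ is an adjoint operator, then apply Lemma~\ref{l.increase}(1); for (b) you pass the resolvent identity for the $A_n$ to the limit via Lemma~\ref{l.increase}(2). Your added discussion of why the extension-by-zero preserves weak$^*$-continuity and your alternative Dini-based squeezing argument are welcome elaborations, but the paper's proof is the same two-line appeal to Lemma~\ref{l.increase}.
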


\begin{proof}
(a) Note that the operators $R(\lambda, A_n)$ are adjoint operators in view of Lemma \ref{l.amadjoint} and part (c) of Proposition \ref{p.boundeddomain}. Now (a) follows from part (1) of Lemma \ref{l.increase}.

(b) We have
$R(\lambda_1, A_n) - R(\lambda_2, A_n) = (\lambda_2 - \lambda_1) R(\lambda_1, A_n)R(\lambda_2, A_n)$ for all $\lambda_1, \lambda_2 >0$.
In view of the definition of the operators $R(\lambda_1)$ and $R(\lambda_2)$, part (b)  follows immediately from Lemma \ref{l.increase}(2).
\end{proof}

Since $(R(\lambda))_{\lambda >0}$ is a pseudoresolvent, the kernel and the range of $R(\lambda)$ are independent of $\lambda>0$. However, as $(\lambda - \mathscr{A})R(\lambda)f=f$, it follows that $\ker R(\lambda ) = \{0\}$ for all $\lambda >0$.  By \cite[Proposition B.6]{arendt2001} $(R(\lambda))_{\lambda>0}$ is the resolvent of an operator. 

\begin{defn}\label{d.am}
The operator $A_\mu$ is defined as the unique operator for which $R(\lambda, A_\mu) = R(\lambda)$ for all $\lambda >0$.
In particular, $D(A_\mu)$ is the range of $R(\lambda)$.
\end{defn}

We can now characterize the domain $D(A_\mu)$ in a different way.

\begin{lem}\label{l.minimal}
Let  $\lambda >0$ be given. For $f\geq0$ the function $R(\lambda, A_\mu)f$ is minimal among the positive solutions
 of $\lambda u - \mathscr{A}u = f$ in $D_\mathrm{max}$.
\end{lem}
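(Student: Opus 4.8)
The plan is to use the construction of $R(\lambda,A_\mu)f$ from Theorem \ref{t.elliptic} as the pointwise supremum $u=\sup_{n\in\N}u_n$ of the increasing sequence $u_n\coloneqq R(\lambda,A_n)f$, and to compare, for each fixed $n$, the function $u_n$ with an arbitrary positive solution $v\in D_\mathrm{max}$ of $\lambda v-\mathscr{A}v=f$ on the bounded set $\Omega_n$, by means of the maximum principle in Lemma \ref{lem:maxprinz}. Once $u_n\leq v$ on $\Omega_n$ is established for every $n\in\N$, passing to the supremum immediately gives $R(\lambda,A_\mu)f=u\leq v$, which is exactly the claimed minimality.

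So first I would fix $n\in\N$, set $w\coloneqq u_n-v$ on $\overline{\Omega_n}$, and record its basic properties: since $u_n\in D(A_n)\subset C(\overline{\Omega_n})\cap W(\Omega_n)$ and $v\in D_\mathrm{max}\subset C_b(\overline{\Omega})\cap W(\Omega)$, we have $w\in C(\overline{\Omega_n})\cap W(\Omega_n)$; and on $\Omega_n$ one has $\lambda w-\mathscr{A}w=(\lambda u_n-\mathscr{A}u_n)-(\lambda v-\mathscr{A}v)=f-f=0$ almost everywhere, so $\mathscr{A}w$ has the continuous representative $\lambda w$ and in particular $(\lambda-\mathscr{A})w\leq 0$ on $\Omega_n$. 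The heart of the argument is the boundary inequality $w(z)\leq\applied{w}{\mu_n(z)}$ for every $z\in\partial\Omega_n$, which I would check separately on the two pieces of $\partial\Omega_n$ in the definition \eqref{eq.mun} of $\mu_n$. For $z\in\partial\Omega_n\setminus\partial\Omega\subset\partial B_{n+1}(0)$ one has $\mu_n(z)=0$, and also $u_n(z)=0$ because $u_n\in D(A_n)$; hence $w(z)=-v(z)\leq 0=\applied{w}{\mu_n(z)}$ since $v\geq 0$. For $z\in\partial\Omega_n\cap\partial\Omega$, inserting the boundary conditions $u_n(z)=\applied{u_n}{\mu_n(z)}$, $v(z)=\applied{v}{\mu(z)}$ and the identity $\mu_n(z,\cdot)=\rho_n(z)\,\rho_n\cdot\mu(z,\cdot)$ yields, after cancellation,
\[
w(z)-\applied{w}{\mu_n(z)}=\rho_n(z)\applied{v\rho_n}{\mu(z)}-\applied{v}{\mu(z)}\leq 0,
\]
where the inequality uses $v\geq 0$, $0\leq\rho_n\leq 1$ and positivity of $\mu(z)$. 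Then Lemma \ref{lem:maxprinz}, applied with $U=\Omega_n$ and $\gamma=\mu_n$ (whose hypotheses hold by the discussion preceding Theorem \ref{t.elliptic} and by Lemma \ref{l.approxmeas}), gives $w\leq 0$ on $\overline{\Omega_n}$, i.e.\ $u_n\leq v$ on $\Omega_n$; letting $n\to\infty$ finishes the proof.

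The only step requiring any care is this boundary computation, and there the subtlety is purely bookkeeping: one must keep the cutoff value $\rho_n(z)$ at the boundary point distinct from the cutoff function $\rho_n$ under the integral, and use that $v$ satisfies the \emph{full} boundary condition against $\mu$ whereas $u_n$ only satisfies the truncated one against $\mu_n$. I do not expect any genuine obstacle beyond this: everything else is a direct invocation of Theorem \ref{t.elliptic}, Definition \ref{d.am} and Lemma \ref{lem:maxprinz}.
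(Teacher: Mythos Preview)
Your proposal is correct and follows essentially the same approach as the paper's proof: compare the approximate solutions $u_n=R(\lambda,A_n)f$ with an arbitrary positive solution on $\Omega_n$ via the maximum principle of Lemma~\ref{lem:maxprinz}, verifying the boundary inequality $u_n(z)-v(z)\leq\applied{u_n-v}{\mu_n(z)}$ using $v\geq 0$ and $\mu_n(z)\leq\mu(z)$, and then pass to the supremum. The paper's write-up is terser (it does not split $\partial\Omega_n$ into two pieces explicitly), but the argument is identical.
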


\begin{proof}
Let $0\leq u \in D_\mathrm{max}$ be such that $\lambda u - \mathscr{A}u = f$. Given $n \in \N$, let $u_n = R(\lambda, A_n)f$.
Then we have $(\lambda - \mathscr{A})(u_n - u) = 0$ on $\Omega_n$. Moreover, we have for $z \in \partial \Omega_n$
that
\[
u_n(z) - u(z) = \langle u_n, \mu_n(z)\rangle - \langle u, \mu (z)\rangle \leq 
\langle u_n - u, \mu_n (z)\rangle.
\]
By Lemma \ref{lem:maxprinz}, $u_n \leq u$. Taking the supremum over $n$, it follows that $R(\lambda, A_\mu)f 
=\sup_{n\in \N} u_n \leq u$. This finishes the proof.
\end{proof}

Let us now prove that the resolvent of the operator $A_\mu$ consists of strong Feller operators.

\begin{lem}
For $\lambda >0$ the operator $R(\lambda, A_\mu)$ is a strong Feller operator.
\end{lem}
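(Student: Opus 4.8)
The strategy is to obtain the strong Feller property for $R(\lambda, A_\mu)$ as a consequence of the monotone limit structure established in Theorem \ref{t.elliptic} together with the strong Feller property of the approximating resolvents $R(\lambda, A_n)$ from Proposition \ref{p.boundeddomain}(c). Recall that strong Feller, in the sense of Definition \ref{def.sf}, requires two things: that $R(\lambda, A_\mu)$ take values in $C_b(\overline{\Omega})$, and that it be pointwise sequentially continuous along bounded a.e.-convergent sequences. The first property is essentially immediate: by Theorem \ref{t.elliptic}, $R(\lambda, A_\mu)f \in D_{\mathrm{max}} \subset C_b(\overline{\Omega})$ for every $f \in L^\infty(\Omega)$, so $R(\lambda, A_\mu)$ already takes values in $C_b(\overline{\Omega})$. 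The work is in the continuity condition (ii).

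First I would reduce to the positive case: since $R(\lambda, A_\mu)$ is a bounded linear operator and every bounded a.e.-convergent sequence $(f_n)$ can be written via its positive and negative parts (and, in the complex case, real and imaginary parts), it suffices to treat $0 \le f_n \uparrow f$ and more generally monotone sequences; a standard argument then upgrades to arbitrary bounded a.e.-convergent sequences by sandwiching $f_n$ between $\sup_{k\ge n} f_k$ and $\inf_{k \ge n} f_k$ and using monotonicity and order continuity. For the monotone case $0 \le f_n \uparrow f$, I would use that $R(\lambda, A_\mu)$ is positive (Theorem \ref{t.elliptic}) so $R(\lambda, A_\mu)f_n \uparrow$ is increasing and dominated by $R(\lambda, A_\mu)f$; set $g := \sup_n R(\lambda, A_\mu)f_n \le R(\lambda, A_\mu)f$ pointwise. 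The point is to show $g = R(\lambda, A_\mu)f$.

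The key idea here is to interchange the two suprema — the one over the exhausting domains $\Omega_n$ (from Theorem \ref{t.elliptic}, $R(\lambda,A_\mu)h = \sup_n R(\lambda, A_n)h$ for $h \ge 0$) and the one over $n$ in $f_n \uparrow f$. Concretely, each $R(\lambda, A_n)$ is a strong Feller operator on $L^\infty(\Omega_n)$ by Proposition \ref{p.boundeddomain}(c), hence pointwise sequentially continuous, so $R(\lambda, A_n)f_k \uparrow R(\lambda, A_n)f$ pointwise on $\Omega_n$ as $k \to \infty$. Then for $x \in \Omega$, fixing $n$ with $x \in \Omega_n$,
\[
R(\lambda, A_\mu)f(x) = \sup_m R(\lambda, A_m)f(x) = \sup_{m \ge n}\, \sup_k R(\lambda, A_m)f_k(x) = \sup_k\, \sup_{m\ge n} R(\lambda, A_m)f_k(x) = \sup_k R(\lambda, A_\mu)f_k(x),
\]
where the interchange of suprema is legitimate because all quantities are nonnegative and monotone in both indices. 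This gives $g(x) = R(\lambda, A_\mu)f(x)$ for $x \in \Omega$, and since both sides lie in $C_b(\overline{\Omega})$ and $\Omega$ is dense in $\overline{\Omega}$, equality extends to $\overline{\Omega}$; thus $R(\lambda, A_\mu)f_n \to R(\lambda, A_\mu)f$ pointwise on $\overline{\Omega}$, which is condition (ii) for monotone sequences.

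The main obstacle I anticipate is bookkeeping the reduction from monotone to general bounded a.e.-convergent sequences cleanly — one must be careful that the sandwiching sequences $\sup_{k \ge n} f_k$ and $\inf_{k \ge n} f_k$ are themselves in $L^\infty(\Omega)$ and converge a.e. to $f$, then invoke the monotone case twice and positivity of $R(\lambda, A_\mu)$ to squeeze $R(\lambda, A_\mu)f_n(x)$; alternatively one can cite Lemma \ref{l.kernellinfty}, noting that it suffices to check the kernel property, for which pointwise continuity along monotone sequences already generates the kernel $k(x, A) := R(\lambda, A_\mu)\one_A(x)$ and then density of simple functions finishes. Either way the analytic content is entirely in the supremum-interchange above; the rest is routine.
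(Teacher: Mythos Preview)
Your proof is correct and takes a genuinely different route from the paper's. The paper argues via PDE regularity: writing $u_n = R(\lambda, A_\mu)f_n$ and $u = \sup_n u_n$, it invokes Lemma~\ref{lem:subsequencesolution} to show $u \in C_b(\Omega)\cap W(\Omega)$ with $\lambda u - \cA u = f$, uses monotone convergence in the boundary integral together with Lemma~\ref{lem:continuous-up-to-the-boundary} to obtain $u \in C_b(\overline{\Omega})$, hence $u \in D_{\max}$, and finally identifies $u$ with $R(\lambda, A_\mu)f$ via weak$^*$-continuity of the adjoint operator $R(\lambda, A_\mu)$. Your argument is purely order-theoretic: you exploit that $R(\lambda, A_\mu)h$ is, for $h\ge 0$, the pointwise supremum over $m$ of the strong Feller operators $R(\lambda, A_m)$ applied to $h$ (Theorem~\ref{t.elliptic} and Proposition~\ref{p.boundeddomain}(c)), and simply interchange the two monotone suprema. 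This bypasses Lemmas~\ref{lem:subsequencesolution} and~\ref{lem:continuous-up-to-the-boundary} and the weak$^*$-identification step entirely; it is shorter and more elementary, at the price of using the strong Feller property of the approximants as a black box. The paper's route, by contrast, would still go through for approximants merely known to take values in $C_b$, and it makes the PDE content of the limit visible.

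One minor clean-up: your extension step ``since both sides lie in $C_b(\overline{\Omega})$'' is not quite justified as stated, because you do not yet know that $g = \sup_k R(\lambda, A_\mu)f_k$ is continuous on $\partial\Omega$. However, this step is unnecessary: your supremum-interchange identity
\[
\sup_m R(\lambda, A_m)f(x) \;=\; \sup_m\,\sup_k R(\lambda, A_m)f_k(x) \;=\; \sup_k\,\sup_m R(\lambda, A_m)f_k(x)
\]
already holds for every $x \in \overline{\Omega}$, since the strong Feller property of $R(\lambda, A_m)$ gives pointwise convergence on all of $\overline{\Omega_m}$ and the extension by zero handles the rest. So you obtain $g(x) = R(\lambda, A_\mu)f(x)$ for all $x \in \overline{\Omega}$ directly, with no extension argument needed.
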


\begin{proof}
As $R(\lambda, A_\mu)$ takes values in $D_\mathrm{max}\subset C_b(\overline{\Omega})$, it only remains to prove the continuity condition (ii) in Definition \ref{def.sf}. As we are dealing with positive operators, it actually suffices to consider merely increasing sequences, cf.\ \cite[Lemma 5.5]{akk16}.

So let an increasing sequence $(f_n)\subset L^\infty(\Omega)$ be given that is uniformly bounded and consists of positive functions. We set $f\coloneqq \sup_{n\in \N} f_n$. We fix $\lambda>0$ and set $u_n \coloneqq R(\lambda,A_\mu)f_n \in \Dmax \subseteq 
	C_b(\overline{\Omega})\cap W(\Omega)$.
	Since $R(\lambda, A_\mu)$ is a positive operator, $(u_n)$ is an increasing and uniformly bounded sequence of positive functions.
	Let $u(x) \coloneqq \sup_{n\in\N} u_n(x)$ for $x\in \overline{\Omega}$.
	Note that $\cA u_n = \lambda u_n -f_n$ is uniformly bounded and converges pointwise almost everywhere to $\lambda u-f$.
	Hence, it follows from Lemma \ref{lem:subsequencesolution} that $u\in C_b(\Omega)\cap W(\Omega)$ and $\lambda u - \cA u =f$.
Consequently, the mapping
\[ \varphi(z) \coloneqq \int_{\Omega} u(x)\mu (z, dx)  \]
defines a function $\phi \in C_b(\partial \Omega)$ by the continuity of $z\mapsto \mu(z)$.
Moreover, by monotone convergence we obtain
\[ u(z) = \sup_{n\in \N} u_n(z) = \sup_{n\in \N} \applied{ u_n}{ \mu(z)} = \applied{ u}{ \mu(z)} = \varphi (z)  \]
for all $z\in \partial\Omega$. In particular, $u$ is continuous on $\partial \Omega$. 
Therefore, we obtain from Lemma \ref{lem:continuous-up-to-the-boundary}  that $u\in C_b(\overline{\Omega})$.
This shows that $u\in \Dmax$ and $\lambda u-\cA u=f$. 

As a consequence of Lemma \ref{l.increase}, $R(\lambda, A_\mu)$ is an adjoint operator, whence it follows that
$R(\lambda, A_\mu)f_n \weak^* R(\lambda, A_\mu)f$. Since $L^1(\Omega)$ separates $C_b(\overline{\Omega})$, we must have
$u = R(\lambda, A_\mu)f$.
\end{proof}

\section{Unique solvability of the elliptic equation}\label{s.lyap}

Throughout this section, we assume Hypotheses \ref{h.coeff} and \ref{h.mu}.
We have seen in Lemma \ref{l.minimal} that for positive $f$ the function $R(\lambda, A_\mu)f$ is the minimal solution
of the elliptic equation $\lambda u - \cA u = f$ in $\Dmax$. It is a natural question, when the elliptic equation 
is uniquely solvable in $\Dmax$, i.e.\ whether $D(A_\mu) = \Dmax$. 
Without further assumptions, this is not the case; see \cite[Example 7.10]{mpw02} for an
example where $\Omega = \R^d$, i.e.\ we do not have boundary conditions.

Let us begin with the following Lemma.

\begin{lem}\label{l.deqdmax}
The following are equivalent:
\begin{enumerate}
[(i)]
\item $\one\in D(A_\mu)$.
\item $D(A_\mu) = \Dmax$.
\end{enumerate}
\end{lem}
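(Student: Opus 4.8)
The plan is to prove the two implications separately, with the non-trivial direction being (i) $\Rightarrow$ (ii). The implication (ii) $\Rightarrow$ (i) is immediate: $\one \in C_b(\overline{\Omega}) \cap W(\Omega)$, we have $\cA \one = 0 \in L^\infty(\Omega)$, and the boundary condition $\one(z) = \int_\Omega \one \, d\mu(z) = \mu(z,\Omega) = 1$ holds since every $\mu(z)$ is a probability measure; hence $\one \in \Dmax$, and if $D(A_\mu) = \Dmax$ then trivially $\one \in D(A_\mu)$.

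For the converse, suppose $\one \in D(A_\mu)$. Since $D(A_\mu) = \rg R(1, A_\mu)$ and $(1 - \cA)\one = \one$, the minimality statement in Lemma \ref{l.minimal} forces $R(1, A_\mu)\one = \one$; more generally $R(\lambda, A_\mu)\one = \lambda^{-1}\one$ for all $\lambda > 0$. Now take an arbitrary $u \in \Dmax$; I want to show $u \in D(A_\mu)$. By splitting into real and imaginary parts and then positive and negative parts, and using that $u \pm \|u\|_\infty \one$ shifts a real function to a nonnegative one lying in $\Dmax$ (here the identity $R(\lambda, A_\mu)\one = \lambda^{-1}\one$ and linearity are what let me reduce to the positive case), it suffices to treat $0 \le u \in \Dmax$. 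Set $\lambda = 1$, $f \coloneqq u - \cA u = (1-\cA)u \in L^\infty(\Omega)$, and observe $f \ge 0$ is not automatic — but $u$ is bounded, so $f + c\one \ge 0$ for $c = \|f\|_\infty$, and since $R(1,A_\mu)(f + c\one) = R(1,A_\mu)f + c\one$ while also $(1-\cA)(u + c\one) = f + c\one$, I may as well assume $f \ge 0$ from the start. Then $R(1, A_\mu)f$ is the minimal positive solution in $\Dmax$ of $u - \cA u = f$, so $R(1,A_\mu)f \le u$.

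The remaining point is the reverse inequality $u \le R(1, A_\mu)f$, which is where the real work lies. Here is the idea: fix $n$ and compare $u|_{\overline{\Omega_n}}$ with $u_n \coloneqq R(1, A_n)f$ on $\Omega_n$. We have $(1 - \cA)(u - u_n) = 0$ on $\Omega_n$. On the part of $\partial\Omega_n$ lying in $\partial\Omega$ we can try to use the boundary conditions, but $u$ satisfies the \emph{full} nonlocal condition $u(z) = \langle u, \mu(z)\rangle$ whereas $u_n$ only sees the truncated measure $\mu_n(z) = \rho_n(z)\, (\rho_n \mu(z))$, and on $\partial\Omega_n \cap \partial B_{n+1}(0)$ the function $u$ need not vanish. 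So the naive maximum-principle comparison fails — this is exactly the obstacle. The way around it (and here I expect to have to mimic the truncation argument already used for the $u_n$ themselves, or invoke a Lyapunov-type or barrier argument) is: since $\one \in D(A_\mu)$, for any $\eps > 0$ the function $u - \eps\one$ still lies in $\Dmax$ with $(1-\cA)(u - \eps\one) = f - \eps\one$, and one shows that the ``defect'' $u(z) - \langle u, \mu_n(z)\rangle = u(z) - \rho_n(z)\langle u\rho_n, \mu(z)\rangle \to 0$ as $n \to \infty$ uniformly on compacta by dominated convergence, together with control near $\partial B_{n+1}(0)$. Combining this with $u \le w_n$ for the solution $w_n$ on $\Omega_n$ with suitable boundary data majorizing $u$ (as in the proof of Lemma \ref{lem:continuous-up-to-the-boundary}), and letting $n \to \infty$ so that $w_n \downarrow$ towards $R(1,A_\mu)f$ while $\eps \to 0$, yields $u \le R(1, A_\mu)f$. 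Hence $u = R(1, A_\mu)f \in D(A_\mu)$, completing the proof.

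I expect the main obstacle, as indicated, to be making rigorous the passage to the limit in the comparison on the truncated domains $\Omega_n$: one must show that the error committed by replacing the nonlocal boundary condition on $\partial\Omega$ by its truncated version on $\partial\Omega_n$, plus the spurious zero boundary values on $\partial B_{n+1}(0)$, can be absorbed using the available Lyapunov function $\one$ (whose existence in $D(A_\mu)$ is precisely the hypothesis) via the maximum principle of Lemma \ref{lem:maxprinz}. Once the estimate $u \le R(1,A_\mu)f + \eps\,\one$ is established for every $\eps > 0$, letting $\eps \downarrow 0$ finishes the argument.
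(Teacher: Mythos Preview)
Your trivial direction (ii)~$\Rightarrow$~(i) is fine, and in the converse direction your reductions (to real, then to $u\ge 0$ with $f=(1-\cA)u\ge 0$) are correct, as is the inequality $R(1,A_\mu)f\le u$ from Lemma~\ref{l.minimal}. The gap is in the reverse inequality $u\le R(1,A_\mu)f$: your proposed argument---comparing $u$ with $u_n=R(1,A_n)f$ on $\Omega_n$, introducing auxiliary $w_n$ with larger boundary data, and hoping $w_n\downarrow R(1,A_\mu)f$---does not work as stated. The operators $R(1,A_n)$ approach $R(1,A_\mu)$ from \emph{below}, so any comparison function $w_n$ that dominates $u$ on $\partial\Omega_n$ (in particular on $\partial B_{n+1}(0)$, where $u$ need not vanish) will overshoot $R(1,A_\mu)f$, and there is no mechanism to make $w_n$ decrease to $R(1,A_\mu)f$. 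The hypothesis $\one\in D(A_\mu)$ is not a Lyapunov condition and does not give you control ``at infinity'' in the way your sketch suggests.

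The paper's argument avoids this entirely by a second application of the \emph{same} minimality lemma. It suffices to show $\lambda-\cA$ is injective on $\Dmax$ (then $R(\lambda,A_\mu)(\lambda-\cA)$ is a bijection $\Dmax\to D(A_\mu)$ fixing every element). So take $u\in\Dmax$ with $(\lambda-\cA)u=0$ and, after scaling, $-\one\le u\le\one$. The function $v:=\one-u$ is nonnegative, lies in $\Dmax$, and satisfies $(\lambda-\cA)v=\lambda\one$. By Lemma~\ref{l.minimal}, $R(\lambda,A_\mu)(\lambda\one)\le v$. But $\one\in D(A_\mu)$ gives $R(\lambda,A_\mu)(\lambda\one)=\one$, hence $\one\le\one-u$, i.e.\ $u\le 0$. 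Applying the same to $\one+u$ yields $u\ge 0$, so $u=0$. In your setup this amounts to: once you know $w:=u-R(1,A_\mu)f\ge 0$ and $(1-\cA)w=0$, apply minimality to $\|w\|_\infty\one-w$ to conclude $w\le 0$. That single line replaces your entire truncation-and-limit programme.
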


\begin{proof}
Let us assume that $\one \in D(A_\mu)$. To prove (ii), we only need to show that for some $\lambda >0$ the operator 
$\lambda - \cA$ is injective 
on $\Dmax$. Indeed, $R(\lambda, A_\mu)$ is a bijection between $D(A_\mu)$ and $L^\infty(\Omega)$ and
$\lambda - \cA : \Dmax \to L^\infty(\Omega)$ is clearly surjective. Thus, if $\lambda -\cA$ is injective on $\Dmax$, then
$R(\lambda, A_\mu)(\lambda -\cA)$ is a bijection from $\Dmax$ to $D(A_\mu)$ and $R(\lambda, A_\mu)(\lambda - \cA) u = u$
for $u \in \Dmax$.

So fix $\lambda >0$ and  let $u\in \Dmax$ with $\lambda u - \cA u = 0$ be given. We assume without loss of generality that $-1\leq u(x) \leq 1$
for all $x \in \overline{\Omega}$. Then $v\coloneqq \one - u$ is a positive function which satisfies $\lambda v -\cA v = \lambda
\one$. As $\one \in D(A_\mu)$, we must have $R(\lambda, A_\mu)\lambda\one = \one$. It follows from
Lemma \ref{l.minimal} that $\one \leq v = \one - u$, i.e.\ $u\leq 0$. Similarly, $\tilde v \coloneqq \one + u$ is a positive function
with $\lambda\tilde v - \cA\tilde v = \lambda\one$ and with the same arguments we find $u\geq 0$. This proves
that $\lambda - \cA$ is injective on $\Dmax$ and finishes the proof of the implication (i) $\Rightarrow$ (ii). The converse implication
is trivial.
\end{proof}

We will see in the next section that $A_\mu$ generates a positive, injective and 
contractive $*$-semigroup $T_\mu$ on $L^\infty(\Omega)$.  Noting that
$\cA \one = 0$, we see that $\one \in D(A_\mu)$ is equivalent to $\one \in \ker A_\mu$ which, in view of Proposition 
\ref{prop:AWF} is equivalent to $T_\mu(t)\one = \one$ for all $t>0$.  Thus, the elliptic equation is uniquely solvable if 
and only if the semigroup generated by $A_\mu$ is Markovian.\medskip

We next provide a sufficient condition for $\lambda -\cA$ to be injective on $\Dmax$. This condition involves 
the existence of a certain \emph{Lyapunov function} for $\cA$.

\begin{hyp}
\label{hyp:Lyapunov}
	There exists a function $V \in C(\overline{\Omega})\cap W(\Omega)$, such that
	\begin{enumerate}[(a)]
		\item $\lim_{\abs{x} \to\infty} V(x) = \infty$.
		\item $\cA V$ coincides almost everywhere on $\Omega$  with a continuous function that is bounded on bounded subsets of $\Omega$.
		\item there is a radius $r>0$ such that $(\lambda - \cA)V \geq 0$ on $\Omega\setminus B_r(0)$.
	\end{enumerate}
\end{hyp}

\begin{lem}
\label{lem:unique}
	Assume Hypothesis \ref{hyp:Lyapunov}. Let $\lambda >0$ and let $u\in W(\Omega)\cap C_b(\overline{\Omega})$ be 
	such that 
	$\cA u$ has a continuous version and such that $(\lambda - \cA)u \leq 0$. Then
	\begin{equation}\label{eq.unique} \sup_{x\in \overline{\Omega}} u(x) \leq \sup_{z\in\partial \Omega} u^+(z).\end{equation}
\end{lem}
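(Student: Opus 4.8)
The plan is to argue by contradiction using the Lyapunov function $V$ together with the maximum principle machinery already developed. Suppose the inequality \eqref{eq.unique} fails, i.e. $S\coloneqq \sup_{x\in\overline\Omega}u(x) > \sup_{z\in\partial\Omega}u^+(z) \ge 0$; in particular $S>0$ and $u<S$ on $\partial\Omega$ by the assumption $(\lambda-\cA)u\le 0$ being irrelevant to this last point — the relevant input is simply that $S$ is not attained on $\partial\Omega$ (strictly, $u(z)\le u^+(z)\le \sup u^+|_{\partial\Omega}<S$). For $\eps>0$ consider the perturbed function $u_\eps\coloneqq u - \eps V$. Since $V(x)\to\infty$ as $|x|\to\infty$ while $u$ is bounded, $u_\eps$ attains its supremum over $\overline\Omega$ at some point $x_\eps\in\overline\Omega$, and moreover all such points lie in a fixed bounded set once $\eps$ is small enough (because outside a large ball $u_\eps$ is below, say, $S/2$, uniformly in small $\eps$).

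First I would choose $\eps$ small enough that $\sup_{\overline\Omega}u_\eps > \sup_{z\in\partial\Omega}u^+(z) + \eps\sup_{\partial\Omega}|V|$; this is possible since $u_\eps\to u$ uniformly on the boundary and $S$ is strictly above $\sup u^+|_{\partial\Omega}$, while also $\sup_{\overline\Omega}u_\eps \ge S - \eps\,V(x^*)$ for any fixed near-maximizer $x^*$ of $u$. Then at a maximizer $x_\eps$ one has $u_\eps(x_\eps)>u_\eps(z)$ for every $z\in\partial\Omega$, which forces $x_\eps\in\Omega$ (an interior point). Next I would enlarge $r$ in Hypothesis \ref{hyp:Lyapunov}(c), if necessary, so that all the maximizers $x_\eps$ (for $\eps$ in a suitable small range) lie outside $B_r(0)$ — this requires ensuring the maximum is genuinely ``far out''. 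Here is where a little care is needed: if the maximizers happen to stay in a bounded set independent of $r$, one cannot push them outside $B_r(0)$. I would handle this by observing that $u^+$ restricted to $\partial\Omega$ and $u$ on $\overline\Omega\cap B_r(0)$ may well be such that $S$ is attained inside $B_r(0)$, in which case the bound \eqref{eq.unique} is not obviously true from the Lyapunov hypothesis alone — so in fact one should not expect the maximizer to be outside $B_r(0)$.

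Rethinking: the cleaner route is not to locate the maximizer outside $B_r(0)$ but to apply Lemma \ref{lem:maxprinz}-type reasoning directly. Since $(\lambda-\cA)u\le 0$ on all of $\Omega$ and $(\lambda-\cA)V\ge 0$ on $\Omega\setminus B_r(0)$, the function $u_\eps=u-\eps V$ satisfies $(\lambda-\cA)u_\eps\le 0$ on $\Omega\setminus B_r(0)$. On the bounded set $\Omega\cap B_R(0)$ for large $R>r$, apply the classical maximum principle Lemma \ref{l.cmp}: at an interior maximizer $x_0$ of $u_\eps$ with $u_\eps(x_0)>0$ one gets $\cA u_\eps(x_0)\le 0$, hence $\lambda u_\eps(x_0)\le \cA u_\eps(x_0)\le 0$, contradicting $u_\eps(x_0)>0$ — but this only works if $x_0\notin B_r(0)$. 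So the actual content of Lemma \ref{lem:unique} must be: the maximum of $u_\eps$ over $\overline\Omega$ is attained either on $\partial\Omega$, or inside $\overline{B_r(0)}$, or is $\le 0$. Taking $\eps\downarrow 0$: the boundary values converge to $u|_{\partial\Omega}\le \sup u^+|_{\partial\Omega}$, and the supremum over $\overline{B_r(0)}$ of $u-\eps V$ converges to $\sup_{\overline{B_r(0)}}u\le \sup_{\overline\Omega}u$. This still does not close the gap unless $\sup_{\overline{B_r(0)}}u$ can be controlled.

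Therefore the correct argument, which I would write out, is the following three-step scheme. \textbf{Step 1:} Reduce to showing that if $S>0$ then $S=\sup_{z\in\partial\Omega}u^+(z)$. Assume not. \textbf{Step 2:} For $\eps>0$ let $S_\eps\coloneqq\sup_{\overline\Omega}(u-\eps V)$; by coercivity of $V$ this is attained at some $x_\eps$, and $S_\eps\to S$ as $\eps\to0$, so $S_\eps>0$ for small $\eps$ and $S_\eps$ exceeds $\sup_{\partial\Omega}(u-\eps V)^+$ for small $\eps$, forcing $x_\eps\in\Omega$. \textbf{Step 3:} Show $x_\eps\in\Omega\setminus B_r(0)$ for all small $\eps$: this is the main obstacle. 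The point is that on the compact set $\overline\Omega\cap\overline{B_r(0)}$ the function $u$ is $\le S$, and if $\sup_{\overline\Omega\cap\overline{B_r(0)}}u < S$ then for small $\eps$ the maximizer $x_\eps$ cannot be in $B_r(0)$; if instead $\sup_{\overline\Omega\cap\overline{B_r(0)}}u=S$, then $S$ is attained at some interior point $x_0\in\Omega\cap B_r(0)$ (it is not attained on $\partial\Omega$ by assumption, and not on $\partial B_r(0)\cap\Omega$ versus... ) — and then applying Lemma \ref{l.cmp} to $u$ itself at $x_0$ with $u(x_0)=S>0$ gives $\cA u(x_0)\le 0$, so $\lambda S=\lambda u(x_0)\le\cA u(x_0)\le 0$, a contradiction. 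Hence $x_\eps\in\Omega\setminus B_r(0)$. Now apply Lemma \ref{l.cmp} to $u_\eps$ at $x_\eps$: since $u_\eps(x_\eps)=S_\eps>0$ we get $\cA u_\eps(x_\eps)\le 0$, hence $0<\lambda S_\eps=\lambda u_\eps(x_\eps)\le \cA u_\eps(x_\eps) + (\lambda u_\eps(x_\eps)-\cA u_\eps(x_\eps))$; and on $\Omega\setminus B_r(0)$ we have $(\lambda-\cA)u\le0$ and $(\lambda-\cA)V\ge0$, so $(\lambda-\cA)u_\eps\le 0$, i.e. $\lambda u_\eps(x_\eps)\le\cA u_\eps(x_\eps)\le 0$, contradicting $S_\eps>0$. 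This contradiction establishes \eqref{eq.unique}. The single delicate point, which I would be careful to write precisely, is Step 3 — distinguishing whether the supremum of $u$ over the compact piece $\overline\Omega\cap\overline{B_r(0)}$ equals the global supremum, and invoking Lemma \ref{l.cmp} in that borderline case.
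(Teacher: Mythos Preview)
Your final three-step argument is correct and uses the same core idea as the paper: perturb $u$ by a small multiple of the Lyapunov function $V$ so that the resulting function tends to $-\infty$ at infinity and therefore attains its supremum, then apply Lemma~\ref{l.cmp} at that point. The difference is in how the region $B_r(0)$ is handled.

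The paper observes at the outset that one may replace $V$ by $V+c\one$ for a suitable constant $c>0$ so that $(\lambda-\cA)V\ge 0$ holds on \emph{all} of $\Omega$, not just on $\Omega\setminus B_r(0)$; this is possible because $\cA V$ and $V$ are bounded on the bounded set $\Omega\cap B_r(0)$ by Hypothesis~\ref{hyp:Lyapunov}(b). With this normalization in place, the perturbed function $u_n=u-\tfrac{1}{n}V$ satisfies $(\lambda-\cA)u_n\le 0$ everywhere, and the argument concludes in one stroke: at any interior maximizer $x_n$ one gets $\lambda u_n(x_n)\le \cA u_n(x_n)\le 0$. No case distinction is needed, and the contradiction framing is avoided entirely --- the paper proves directly that $\sup_{\overline\Omega}u_n\le \sup_{\partial\Omega}u_n^+$ and then lets $n\to\infty$.

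Your route works but is more laborious: you leave $(\lambda-\cA)V\ge 0$ valid only outside $B_r(0)$ and then must argue separately that either the global supremum of $u$ is already attained on the compact set $\overline\Omega\cap\overline{B_r(0)}$ (where Lemma~\ref{l.cmp} applied to $u$ itself gives the contradiction) or else the maximizer $x_\eps$ of $u_\eps$ eventually lies in $\Omega\setminus\overline{B_r(0)}$. This is the ``delicate point'' you flag, and you do handle it correctly --- but the paper's constant-shift trick dissolves the difficulty before it arises. I would also tighten the exposition: the proposal reads as a sequence of false starts before the working argument emerges, and several of the early paragraphs should simply be deleted.
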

\begin{proof}
Note that as a consequence of Hypothesis \ref{hyp:Lyapunov} we may assume that $(\lambda - \cA)V\geq 0$ on $\Omega$, as we may replace $V$ by $V+c\one_{\overline{\Omega}}$ if necessary. We assume this in what follows.

	For each $n\in\N$ define $u_n \coloneqq u - \frac{1}{n}V$ and note that by Hypothesis \ref{hyp:Lyapunov}(a) we may find a constant $C\geq 0$ such that
	$V\geq -C$. Therefore, $u_n \leq u+\frac{1}{n}C$ on $\overline{\Omega}$ for each $n\in\N$ and in particular $u_n$ is bounded from above.
	We immediately obtain from this that 
	\[\sup_{z\in \partial \Omega} u_n^+(z) \leq \sup_{z\in \partial \Omega} u^+(z) + \frac{1}{n} C\]
	for each $n\in\N$. Moreover, since $u_n$ converges to $u$ pointwise, it also follows that 
	\begin{equation}\label{eq.unique2} \lim_{n\to\infty} \sup_{x\in\overline{\Omega}} u_n(x)  = \sup_{x\in\overline{\Omega}} u(x).\end{equation}
	Indeed, given $\eps>0$ we can pick $x_0 \in \overline{\Omega}$ such that $u(x_0) > \sup u - \eps$. Then we may find $n_0 \in \N$ such that
	\[\sup_{x\in\overline{\Omega}}u_n(x) \leq \sup_{x\in\overline{\Omega}}u(x) + \eps \leq u(x_0) +2\eps \leq u_n(x_0) +3\eps \leq \sup_{x\in\overline{\Omega}}u_n(x) + 3\eps \]
	for every $n\geq n_0$, which proves \eqref{eq.unique2} as $\eps>0$ was arbitrary. To show \eqref{eq.unique},
	it thus suffices to show that 
	\begin{align}
	\label{eqn:aimlemmaunique}
	\sup_{x\in\overline{\Omega}}u_n(x) \leq \sup_{z\in \partial\Omega}u_n^+(z)
	\end{align}
	for every $n\in\N$.
	
	It follows from Hypothesis \ref{hyp:Lyapunov}(a) that $\lim_{\abs{x}\to\infty} u_n(x) = -\infty$ for any $n\in\N$. Thus we find $x_n \in \overline{\Omega}$ with $u_n(x_n) = \max_{x\in \overline{\Omega}} u_n(x)$. 
	If $x_n \in \partial \Omega$ then \eqref{eqn:aimlemmaunique} holds true, so assume that $x_n \in \Omega$. As $\cA V$ has a continuous version, so does $\cA u_n$ and we can conclude from Lemma \ref{l.cmp}  that
	$\cA u_n (x_n) \leq 0$. Since both $(\lambda - \cA)V \geq 0$ and $(\lambda - \cA) u \leq 0$ we find $(\lambda -\cA)u_n \leq 0$ and it follows that $\lambda u_n (x_n) \leq \cA u_n(x_n)\leq 0$. Thus, in this case, \eqref{eqn:aimlemmaunique} holds trivially.
\end{proof}

\begin{thm}
	\label{thm:unique}
	Assume Hypothesis \ref{hyp:Lyapunov} and also assume that there exists an index $N  \in \N$ and an $\eps>0$ such that 
	$\mu(z, \Omega_N) \geq \eps$ for all $z\in \partial\Omega$.
	Let $\lambda > 0$ and $u\in \Dmax$ such that $(\lambda -\cA)u\leq 0$. Then $u\leq 0$.
\end{thm}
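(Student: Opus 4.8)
The plan is to argue by contradiction. First I would reduce to the case that $\cA u$ admits a continuous representative: writing $h \coloneqq \lambda u - \cA u \le 0$ and $w \coloneqq R(\lambda,A_\mu)(-h)$, we have $w \ge 0$ by positivity of $R(\lambda,A_\mu)$, so $u+w \ge u$ again lies in $\Dmax$; moreover $(\lambda-\cA)(u+w) = h + (-h) = 0$, hence $\cA(u+w) = \lambda(u+w) \in C_b(\overline{\Omega})$. Since it suffices to prove $u+w \le 0$, we replace $u$ by $u+w$; thus from now on $u \in \Dmax$, $(\lambda-\cA)u \le 0$, and $\cA u$ has a continuous version. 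Now suppose, for contradiction, that $S \coloneqq \sup_{x\in\overline{\Omega}} u(x) > 0$. By Lemma \ref{lem:unique} we have $S \le \sup_{z\in\partial\Omega} u^+(z)$, and since $S>0$ this forces $\sup_{z\in\partial\Omega} u(z) = S$, so we may pick a sequence $(z_k) \subset \partial\Omega$ with $u(z_k) \to S$. In other words, the Lyapunov function has done its job: the supremum is approached along the boundary rather than only ``at infinity''.

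The second step uses the nonlocal boundary condition together with the weak concentration assumption to upgrade this to the supremum being \emph{attained} on the fixed compact set $\overline{\Omega_N}$. Put $g \coloneqq S\one - u \ge 0$ on $\overline{\Omega}$. Using that $\mu(z_k)$ is a probability measure on $\Omega$ and that $u$ satisfies the boundary condition, $\applied{g}{\mu(z_k)} = S - \applied{u}{\mu(z_k)} = S - u(z_k)$. As $g \ge 0$ and $\mu(z_k,\Omega_N) \ge \eps$,
\[
\eps\,\inf_{x\in\Omega_N} g(x) \le \int_{\Omega_N} g\, d\mu(z_k) \le \applied{g}{\mu(z_k)} = S - u(z_k),
\]
and the right-hand side tends to $0$ as $k\to\infty$. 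Hence $\inf_{\Omega_N} g = 0$, i.e.\ $\sup_{\Omega_N} u = S$, and by continuity of $u$ and compactness of $\overline{\Omega_N}$ there is a point $x_0 \in \overline{\Omega_N}$ with $u(x_0) = S$.

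It remains to exclude both possible locations of $x_0$. If $x_0 \in \Omega$, then $x_0$ is an interior point at which $u$ attains its global supremum $S>0$; in a small ball around $x_0$ contained in $\Omega$ we have $0 < u \le u(x_0)$, so $|u| \le |u(x_0)|$ there, and Lemma \ref{l.cmp} (applied on that ball, where Hypothesis \ref{h.bddcoefficients} is satisfied) gives $\cA u(x_0) \le 0$. Together with $(\lambda-\cA)u(x_0) \le 0$ this yields $\lambda S = \lambda u(x_0) \le \cA u(x_0) \le 0$, contradicting $S>0$. If instead $x_0 \in \partial\Omega$, then since $u(z) = \applied{u}{\mu(z)}$ for every $z\in\partial\Omega$ and $S>0$, Lemma \ref{l.bc2} gives $u(z) < S$ for all $z\in\partial\Omega$, in particular $u(x_0) < S$, which is absurd. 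Hence $S \le 0$, that is, $u \le 0$.

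I expect the crux of the argument to be the second step, where the two hypotheses genuinely interact. The Lyapunov function (through Lemma \ref{lem:unique}) only guarantees that the supremum is \emph{approached} along the boundary, and a priori this could happen along a sequence escaping to infinity, leaving no point at which to apply a pointwise maximum principle. The weak concentration assumption $\mu(z,\Omega_N)\ge\eps$ is precisely what converts this into the supremum being \emph{attained} on the compact set $\overline{\Omega_N}$, after which the interior maximum principle (Lemma \ref{l.cmp}) and the strict boundary inequality (Lemma \ref{l.bc2}) finish the proof. The remaining steps amount to routine bookkeeping with lemmas established earlier.
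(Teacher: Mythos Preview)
Your proof is correct and follows essentially the same strategy as the paper's: both argue by contradiction, invoke Lemma~\ref{lem:unique} to push the supremum to the boundary, and then combine the concentration assumption with Lemmas~\ref{l.cmp} and~\ref{l.bc2}; the only organizational difference is that the paper first shows $\sup_{\overline{\Omega_N}} u < S$ (via the maximum principle) and then derives a contradiction from the boundary condition, whereas you reverse these two steps. Your preliminary reduction to the case where $\cA u$ has a continuous representative is a useful clarification, since Lemmas~\ref{lem:unique} and~\ref{l.cmp} require this and the paper applies them without making the point explicit.
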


\begin{rem}
The condition that there exists an $\eps>0$ and an index $N\in \N$ such that $\mu(z, \Omega_N) \geq \eps$ is a mild concentration condition for the measures $\mu(z)$. It is in particular satisfied whenever the set $\{\mu (z) : z\in \partial \Omega \}$ is tight. As the map $z\mapsto \mu (z)$ is $\sigma (\mathscr{M}(\Omega), C_b(\Omega))$-continuous this is in particular the case, whenever $\partial\Omega$ is compact, e.g.\ for an outer domain. However, this condition is weaker than tightness. For example, if $\Omega = (0,\infty)\times \R$, then we might chose for $z=(0,y) \in \partial\Omega$
the measure $\mu(z) = 1/2\delta_{(1,1)}+1/2\delta_{(y,0)}$. These measures satisfy the concentration condition but they are not tight.
\end{rem}

\begin{proof}[Proof of Theorem \ref{thm:unique}]
	Assume to the contrary that $u(x_0)>0$ for some $x_0 \in \overline{\Omega}$. By Lemma \ref{lem:unique}, we have
	\[ \sup_{x\in\overline{\Omega}}u(x) \leq \sup_{z\in\partial\Omega} u^+(z), \]
which implies that $\sup_{z\in\partial\Omega} u^+(z) > 0$. We set $S\coloneqq \sup_{x\in\overline{\Omega}}u(x) = \sup_{z\in \partial\Omega} u(z)>0$.

	We claim that $\sup_{x\in\overline{\Omega_N}}u(x) < S$. Otherwise, we would have $\sup_{x\in \overline{\Omega}} u(x) = u(x_1)$
	for some $x_1 \in \overline{\Omega_N}$. By Lemma \ref{l.bc2}, we must have $x_1\in \Omega_N$. It now follows from Lemma \ref{l.cmp}  that
	\[ \lambda u(x_1) \leq \cA u (x_1) \leq 0, \]
	in contradiction to $u(x_1)>0$.
	
	Thus, we must have that $\sup_{x\in\overline{\Omega_N}} u(x) = \sup_{x\in\Omega_N} u(x) = S-\rho$ for some $0< \rho\leq S$.
	Now  pick a sequence $(z_n) \subset \partial\Omega$ such that $u(z_n) \to S$ as $n\to\infty$.
	Using the boundary conditions, we see that for every $n\in\N$ we have 
	\begin{align*}
		u(z_n) &= \int_\Omega u \, d\mu(z_n) = \int_{\Omega_N} u \, d\mu(z_n) + \int_{\Omega\setminus \Omega_N} u \, d\mu(z_n) \\
		&\leq (S-\rho) \mu(z_n,\Omega_N) + S\mu(z_n,\Omega\setminus \Omega_N)= S- \rho\cdot\mu(z_n,\Omega_N) \leq S-\eps\rho
	\end{align*}
	By taking the limit $n\to\infty$ we obtain the contradiction $S\leq S-\eps\rho$. This shows that we must have $u\leq 0$ on $\overline{\Omega}$.
\end{proof}

\begin{cor}
\label{cor:uniquesolution}
Assume Hypothesis \ref{hyp:Lyapunov} and that there is some $N\in \N$ and $\eps>0$ such that
$\mu(z, \Omega_N)\geq \eps$ for all $z\in \partial\Omega$.  Then
$D(A_\mu) = \Dmax$.
\end{cor}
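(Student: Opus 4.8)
The plan is to deduce Corollary \ref{cor:uniquesolution} directly from Theorem \ref{thm:unique} together with Lemma \ref{l.deqdmax}. By Lemma \ref{l.deqdmax}, the equality $D(A_\mu)=\Dmax$ is equivalent to $\one \in D(A_\mu)$, so it suffices to show $\one \in D(A_\mu)$. Since $\mathscr{A}\one = 0$, the constant function $\one$ satisfies $\lambda \one - \mathscr{A}\one = \lambda \one$ for every $\lambda>0$, and clearly $\one \in \Dmax$ (it is bounded, continuous, smooth, and $\int_\Omega \one\, d\mu(z) = \mu(z,\Omega) = 1 = \one(z)$ because each $\mu(z)$ is a probability measure). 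Thus $\one$ is \emph{a} solution in $\Dmax$ of the elliptic equation with right-hand side $f = \lambda\one \geq 0$.

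First I would fix some $\lambda>0$ and compare $\one$ with the minimal solution $w \coloneqq R(\lambda, A_\mu)(\lambda\one) \in D(A_\mu) \subset \Dmax$. By Lemma \ref{l.minimal}, $w$ is the smallest positive solution of $\lambda u - \mathscr{A}u = \lambda\one$ in $\Dmax$, so $w \leq \one$. It remains to prove the reverse inequality $w \geq \one$, equivalently that $u \coloneqq \one - w$ satisfies $u \leq 0$. Now $u \in \Dmax$ (as a difference of two elements of $\Dmax$) and $(\lambda - \mathscr{A})u = (\lambda - \mathscr{A})\one - (\lambda - \mathscr{A})w = \lambda\one - \lambda\one = 0 \leq 0$. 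Hypothesis \ref{hyp:Lyapunov} and the concentration assumption $\mu(z,\Omega_N) \geq \eps$ for all $z \in \partial\Omega$ are exactly the hypotheses of Theorem \ref{thm:unique}, which therefore applies and yields $u \leq 0$, i.e.\ $w \geq \one$. Combined with $w \leq \one$ this gives $w = \one$, hence $\one = R(\lambda, A_\mu)(\lambda\one) \in \rg R(\lambda, A_\mu) = D(A_\mu)$.

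Having established $\one \in D(A_\mu)$, an application of Lemma \ref{l.deqdmax} finishes the proof: $D(A_\mu) = \Dmax$. I do not anticipate any real obstacle here, since all the analytic work has been absorbed into Theorem \ref{thm:unique} (whose proof uses the Lyapunov function via Lemma \ref{lem:unique} and the concentration condition via Lemma \ref{l.bc2}) and into Lemma \ref{l.deqdmax}; the only point requiring a moment's care is the bookkeeping that $\one$ genuinely lies in $\Dmax$ and that the difference $\one - R(\lambda,A_\mu)(\lambda\one)$ is again an admissible competitor to which Theorem \ref{thm:unique} may be applied. The argument is essentially the same as the proof of the implication (i) $\Rightarrow$ (ii) in Lemma \ref{l.deqdmax}, read in the contrapositive direction, so one could alternatively phrase the corollary as: Theorem \ref{thm:unique} shows $\lambda - \mathscr{A}$ is injective on $\Dmax$, and injectivity of $\lambda - \mathscr{A}$ together with surjectivity of $R(\lambda,A_\mu)\colon L^\infty(\Omega) \to D(A_\mu)$ forces $D(A_\mu) = \Dmax$.
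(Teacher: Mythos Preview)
Your proof is correct. The paper's own proof is precisely the alternative you sketch in your final sentence: it applies Theorem~\ref{thm:unique} to any $u\in\Dmax$ with $\lambda u-\mathscr{A}u=0$ (once to $u$ and once to $-u$) to conclude directly that $\lambda-\mathscr{A}$ is injective on $\Dmax$, and then deduces $D(A_\mu)=\Dmax$ from the bijectivity argument already used in the proof of Lemma~\ref{l.deqdmax}. Your main route via $\one\in D(A_\mu)$ and Lemma~\ref{l.deqdmax} is a minor repackaging of the same idea---it effectively verifies injectivity on the single element $\one - R(\lambda,A_\mu)(\lambda\one)$ and then invokes Lemma~\ref{l.deqdmax} to promote this to the full statement---so it is slightly less direct but equally valid.
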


\begin{proof}
Let $u \in \Dmax$ be such that $\lambda u -\cA u = 0$. It follows from
Theorem \ref{thm:unique} that $u=0$. Thus, $\lambda - \cA u$ is injective on $\Dmax$, whence $D(A_\mu)=\Dmax$.
\end{proof}

We finally determine the kernel of $A_\mu$ in the case where $\Omega$ is additionally connected.

\begin{cor}\label{c.kernel}
Assume Hypothesis \ref{hyp:Lyapunov} and that there exists $N\in \N$ and $\eps>0$ such that $\mu(z, \Omega_N)\geq \eps$ for all $z\in \partial\Omega$. Moreover,
let $\Omega$ be connected. Then $\ker A_\mu = \mathrm{span}\{\one\}$.
\end{cor}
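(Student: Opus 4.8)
The plan is to show that any $u \in \ker A_\mu$ is constant. Since we assume Hypothesis \ref{hyp:Lyapunov} and the concentration condition $\mu(z,\Omega_N)\geq\eps$, Corollary \ref{cor:uniquesolution} gives $D(A_\mu) = \Dmax$; in particular, $u \in \ker A_\mu$ means $u \in \Dmax$ with $\cA u = 0$. Clearly $\one \in \ker A_\mu$ since $\cA\one = 0$ and $\one$ satisfies the (probability measure) boundary condition, so $\mathrm{span}\{\one\}\subseteq \ker A_\mu$. It remains to prove the reverse inclusion.

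Let $u \in \ker A_\mu$ be real-valued (the complex case follows by taking real and imaginary parts). We apply Theorem \ref{thm:unique} with $\lambda > 0$: since $\cA u = 0$, we have $(\lambda-\cA)u = \lambda u$, which is not a priori $\leq 0$, so we cannot apply the theorem directly to $u$. Instead, the idea is to observe that both $u$ and $-u$ lie in $\ker A_\mu$, hence in $\Dmax$, and to exploit the maximum-principle machinery more carefully. First I would show $u$ attains its supremum $S \coloneqq \sup_{\overline\Omega} u$ and its infimum. Suppose $S > 0$. By Lemma \ref{l.bc2} (applied with $\mathscr{B}u = \cA u = 0 \leq 0$, so the hypotheses of Lemma \ref{lem:unique}/\ref{l.bc2} hold vacuously in the relevant form), one shows the supremum is approached along $\partial\Omega$ or attained in the interior; by Lemma \ref{l.cmp}, if $u(x_1) = S > 0$ at an interior point $x_1$ then $\cA u(x_1) \leq 0$ — but here $\cA u \equiv 0$, so this does not immediately yield a contradiction. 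The resolution is that the \emph{strong} maximum principle for $\cA$ on connected open sets forces $u \equiv S$ on the connected component containing $x_1$, hence on all of $\Omega$ by connectedness, hence on $\overline\Omega$ by continuity.

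More precisely, the cleanest route: by Theorem \ref{thm:unique} applied to $w \coloneqq u - S\one$, which satisfies $w \in \Dmax$ and $(\lambda - \cA)w = \lambda u - \lambda S + 0 = \lambda(u - S) \leq 0$ since $u \leq S$, we conclude $w \leq 0$, which we already knew. But applying Theorem \ref{thm:unique} to $-w = S\one - u$: here $(\lambda-\cA)(-w) = \lambda(S - u) \geq 0$, so the theorem does not apply to $-w$ either. So Theorem \ref{thm:unique} alone is insufficient; one genuinely needs the strong maximum principle. I would invoke the classical strong maximum principle (e.g.\ as in \cite[Theorem 3.5]{gt}, valid for $W^{2,p}_{\loc}$ solutions of $\cA u = 0$ with locally bounded coefficients and local ellipticity, which hold here), to conclude: if $u$ attains an interior maximum on the connected set $\Omega$, then $u$ is constant on $\Omega$. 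Thus either $u$ is constant (and we are done), or $u$ attains neither its supremum nor its infimum in $\Omega$, so $S = \sup_{\partial\Omega} u$ and $\inf_{\overline\Omega} u = \inf_{\partial\Omega} u$.

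In the non-constant case, I would then run the argument from the proof of Theorem \ref{thm:unique}. Since $u$ is nonconstant, after subtracting a constant we may assume $S \coloneqq \sup_{\overline\Omega} u > 0$ and $S = \sup_{\partial\Omega} u$, with $\sup_{\overline{\Omega_N}} u = S - \rho$ for some $\rho > 0$ (the strict inequality $\sup_{\overline{\Omega_N}} u < S$ comes from the same Lemma \ref{l.cmp}/Lemma \ref{l.bc2} reasoning as in Theorem \ref{thm:unique}: an interior maximum at level $S$ would, via the strong maximum principle, force $u \equiv S$, contradicting nonconstancy). Then picking $(z_n) \subset \partial\Omega$ with $u(z_n) \to S$ and using the boundary condition together with $\mu(z_n, \Omega_N) \geq \eps$, exactly as in the displayed computation in the proof of Theorem \ref{thm:unique}, we get
\[
u(z_n) = \int_{\Omega_N} u\,d\mu(z_n) + \int_{\Omega\setminus\Omega_N} u\,d\mu(z_n) \leq (S-\rho)\mu(z_n,\Omega_N) + S\mu(z_n,\Omega\setminus\Omega_N) \leq S - \eps\rho,
\]
and letting $n \to \infty$ gives $S \leq S - \eps\rho$, a contradiction. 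Hence $u$ is constant, proving $\ker A_\mu = \mathrm{span}\{\one\}$. The main obstacle is the realization that Theorem \ref{thm:unique} does not directly close the argument and that one must supplement it with the strong maximum principle to rule out nonconstant harmonic solutions with an interior extremum; once that is in place, the boundary-condition estimate from Theorem \ref{thm:unique} finishes the job verbatim.
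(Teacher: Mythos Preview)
Your strategy is exactly the paper's: rerun the proof of Theorem \ref{thm:unique}, but at the point where an interior maximum $x_1\in\Omega$ with $u(x_1)=S>0$ is produced, invoke the strong maximum principle \cite[Theorem 9.6]{gt} (using that $\Omega$ is connected and $\cA u=0$) to conclude $u$ is constant, rather than deriving a contradiction from $\lambda u(x_1)\le \cA u(x_1)\le 0$; in the complementary case $\sup_{\overline{\Omega_N}}u<S$ finish with the boundary-condition estimate verbatim.

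One step in your write-up needs tightening. You assert: ``$u$ attains neither its supremum nor its infimum in $\Omega$, so $S=\sup_{\partial\Omega}u$.'' On an unbounded $\Omega$ this implication fails: the supremum could be approached only as $|x|\to\infty$, with $\sup_{\partial\Omega}u<S$. The strong maximum principle rules out an \emph{attained} interior maximum, not a supremum at infinity. In the paper's argument this is dealt with by literally repeating the proof of Theorem \ref{thm:unique}, whose first move is Lemma \ref{lem:unique}, and that lemma uses the Lyapunov function $V$ from Hypothesis \ref{hyp:Lyapunov} precisely to force the supremum onto $\partial\Omega$. Your argument never touches $V$, so the existence of the sequence $(z_n)\subset\partial\Omega$ with $u(z_n)\to S$ that you feed into the final displayed estimate is not yet justified; you should invoke the Lyapunov hypothesis (via Lemma \ref{lem:unique} or its proof) to exclude the ``supremum at infinity'' scenario before that step.
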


\begin{proof}
If $u \in \Dmax$ satisfies $-\cA u \leq 0$, then either $u$ is constant or $u \leq 0$. This can be proved repeating the proof of Theorem \ref{thm:unique} till the point where we deduced from the assumption
$\sup_{x\in \overline{\Omega_N}} u(x) = \sup_{x \in \overline{\Omega} }u(x)$ that there must be some 
 $x_1 \in \Omega_N$ such that $u(x_1) = \sup_{x \in \overline{\Omega}} u(x)>0$. At this point, the strict maximum principle
 \cite[Theorem 9.6]{gt} implies that $u$ is constant. In the case where $\sup_{x\in \overline{\Omega_N}} < \sup_{x\in \overline{\Omega}}$, the proof can be finished as  that of Theorem \ref{thm:unique}.
\end{proof}

\section{The semigroup}\label{s.tmu}

After our preparation it is now very easy to establish that $A_\mu$ generates a semigroup. Again, we assume Hypotheses \ref{h.coeff} and \ref{h.mu} throughout this section.

\begin{thm}\label{t.generation}
The operator $A_\mu$ generates a positive and contractive $*$-semigroup $T_\mu = (T_\mu(t))_{t>0}$ on $L^\infty (\Omega)$.
\end{thm}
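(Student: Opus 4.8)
The strategy is to realize $T_\mu$ as the supremum of the semigroups generated by the approximating operators $A_n$ and to invoke the abstract machinery of Section \ref{s.semigroup}. First I would recall that each $A_n$ generates a positive, contractive (analytic) semigroup $S_n = (S_n(t))_{t>0}$ on $L^\infty(\Omega)$ by Proposition \ref{p.boundeddomain}(b); by the Remark following that proposition, $S_n$ may be viewed as a positive, contractive, injective $*$-semigroup on $L^\infty(\Omega)$, its Laplace transform being the resolvent $R(\lambda, A_n)$. (Here we again tacitly identify functions on $\Omega_n$ with their zero extensions to $\Omega$, and $S_n(t)$ with the corresponding operator on $L^\infty(\Omega)$ that annihilates functions supported off $\Omega_n$.)

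\textbf{Monotonicity of the approximating semigroups.} The key point is that $(S_n(t))_{n\in\N}$ is an increasing sequence of positive operators for each fixed $t>0$. Since $T_\mu$ is to be built as $\sup_n S_n(t)$, this monotonicity is exactly what makes Proposition \ref{prop:supremumsemigroup} applicable, and it is the step I expect to require the most care. By Proposition \ref{prop:positivesemigroups} (applied with $T_1 = S_n$, $T_2 = S_{n+1}$, both positive), it suffices to check that $R(\lambda, A_n) \leq R(\lambda, A_{n+1})$ for all $\lambda>0$. But this is precisely the content of Theorem \ref{t.elliptic}, which states that $(R(\lambda, A_n))_{n\in\N}$ is an increasing sequence of positive operators on $L^\infty(\Omega)$; indeed the proof of Theorem \ref{t.elliptic} established the pointwise inequality $0\leq u_n \leq u_{n+1}$ for $u_n = R(\lambda, A_n)f$, $f\geq 0$, via the maximum principle Lemma \ref{lem:maxprinz} together with the compatibility $\mu_n(z)\leq \mu_{n+1}(z)$ from Lemma \ref{l.approxmeas}(c). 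Hence $R(\lambda,A_n)\le R(\lambda,A_{n+1})$, and Proposition \ref{prop:positivesemigroups} yields $S_n(t)\le S_{n+1}(t)$ for all $t>0$.

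\textbf{Passing to the supremum.} Now $X = L^1(\Omega)$ is a separable KB-space, so Proposition \ref{prop:supremumsemigroup} applies: $T_\mu(t) \coloneqq \sup_{n\in\N} S_n(t)$ defines a positive, contractive $*$-semigroup on $L^\infty(\Omega)$ whose Laplace transform is $\sup_{n\in\N} R(\lambda, A_n)$ for all real $\lambda>0$. By the construction of $R(\lambda)$ in Theorem \ref{t.elliptic} (where $R(\lambda)f = \sup_n R(\lambda,A_n)f$ pointwise for $f\ge 0$, extended by linearity), together with Lemma \ref{l.increase}(1), the Laplace transform of $T_\mu$ coincides with the operator $R(\lambda)$, which by Definition \ref{d.am} is $R(\lambda, A_\mu)$. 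Since we already know from Lemma \ref{l.pseudoresolvent} (and the remark after it) that $(R(\lambda))_{\lambda>0}$ is the resolvent of $A_\mu$ with $\ker R(\lambda) = \{0\}$, the $*$-semigroup $T_\mu$ is injective and its generator, in the sense of the definition preceding Proposition \ref{prop:AWF}, is exactly $A_\mu$. This is the assertion of the theorem. (Strictly speaking one should also note that the supremum $\sup_n R(\lambda,A_n)$ taken in the sense of Lemma \ref{l.increase}(1) — i.e. pointwise on the positive cone, then extended — agrees with the operator $R(\lambda)$ of Theorem \ref{t.elliptic}; this is immediate since both are the pointwise supremum of the $R(\lambda,A_n)f$ for $f\ge 0$.)
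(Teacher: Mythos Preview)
Your proof is correct and follows essentially the same route as the paper: extend the approximating semigroups $S_n$ by zero to $L^\infty(\Omega)$, use Theorem \ref{t.elliptic} together with Proposition \ref{prop:positivesemigroups} to obtain monotonicity of $(S_n(t))_{n\in\N}$, and then apply Proposition \ref{prop:supremumsemigroup}. One small inaccuracy: after extension by zero the semigroups $S_n$ are \emph{not} injective on $L^\infty(\Omega)$ (they annihilate everything supported outside $\Omega_n$); injectivity is only recovered for the limit semigroup $T_\mu$, which you correctly establish at the end via $\ker R(\lambda,A_\mu)=\{0\}$.
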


\begin{proof}
Consider again the operators $A_n$ from Section \ref{s.ell}. By Proposition \ref{p.boundeddomain} the operator $A_n$ generates a contractive, positive and holomorphic semigroup $T_n$ on $L^\infty(\Omega_n)$. We have already remarked that we may also view $T_n$ as an injective and contractive $*$-semigroup. Extending $T_n$ and $R(\lambda, A_n)$ (for $\lambda >0)$ by zero outside $\overline{\Omega}_n$, we obtain a (no longer injective) contractive $*$-semigroup with Laplace transform $R(\lambda, A_n)$. By Theorem \ref{t.elliptic} and Proposition \ref{prop:positivesemigroups} the semigroups $T_n$ are increasing. The claim now follows from Proposition \ref{prop:supremumsemigroup}.
\end{proof}

We should point out that in Theorem \ref{t.generation} we only obtain a semigroup on the space $L^\infty(\Omega)$. In that respect, the situation here is very different from that on bounded domains or for the elliptic equation in Section \ref{s.ell} where the operators we obtained always took values in the space of bounded and continuous functions. It was this fact that allowed us to `lift' an operator on $L^\infty(\Omega)$ to a bounded linear operator on $B_b(\overline{\Omega})$. Afterwards, we could use Lemma \ref{l.kernellinfty} to establish that this lifted operator  is a kernel operator.

Our next goal is to prove that we can also lift the operators $T_\mu (t) \in \mathscr{L}(L^\infty(\Omega))$ for $t>0$ to kernel 
operators on $\overline{\Omega}$. To that end, we will use some results concerning order theoretic properties of kernel operators from \cite{gk15}. In particular, we will use the following result which we formulate in the setting used in Section \ref{s.sf}.

\begin{lem}\label{l.sup}
Let $E$ be a complete, separable metric space and let $k_n$ be a sequence of sub-Markovian kernel on $E$, i.e.\ every $k_n$ is a kernel on $E$ such that $k_n (x, \cdot)$ is a positive measure on $\mathscr{B}(E)$ with $0\leq k_n(x, E) \leq 1$ for every $x\in E$. We denote the associated operators on $B_b(E)$ and $\mathscr{M}(E)$ by $K_n$ and $K_n'$ respectively. We put
$k(x,A) \coloneqq \sup_n k_n(x,A)$ for $x\in E$ and $A\in \mathscr{B}(E)$. Then
\begin{enumerate}
[(a)]
\item $k$ is a sub-Markovian kernel on $E$. We denote the associated operators on $B_b(E)$ and $\mathscr{M}(E)$
by $K$ and $K'$ respectively.
\item  $\sup_n K_n = K$ in $\mathscr{L}(B_b(E))$ and $\sup_n K_n' = K'$ in $\mathscr{L}(\mathscr{M}(E))$.
\item  $\sup_n K_n f = Kf$ for every $f\in B_b(E)_+$ and $ \sup_n K_n' \nu = K'\nu$ for every $\nu \in \mathscr{M}(E)_+$.
\end{enumerate}
\end{lem}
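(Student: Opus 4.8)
The plan is to verify directly from the definition of a kernel that $k(x,\cdot) \coloneqq \sup_n k_n(x,\cdot)$ is again a sub-Markovian kernel, and then to identify the associated operators with the suprema of the $K_n$ and $K_n'$ in the respective lattices of operators. First I would fix $x \in E$ and observe that $(k_n(x,\cdot))_n$ is an increasing sequence of positive measures on $\mathscr{B}(E)$ with total mass bounded by $1$ (here I use that the $k_n$ are sub-Markovian and that $\sup_n$ of the measures, not just the set functions, is taken --- increasingness is part of the hypothesis via the appearance of $\sup_n$ in the statement, but to be safe I would phrase it so that $k(x,A) = \sup_n k_n(x,A)$ and note that if the sequence is not monotone one replaces $k_n$ by $\max_{j \le n} k_j$, which does not change the supremum and is still sub-Markovian). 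For such an increasing sequence, $A \mapsto k(x,A)$ is a finite positive measure by the monotone convergence theorem for set functions (countable additivity passes to the increasing limit), and $k(x,E) = \sup_n k_n(x,E) \le 1$; measurability of $x \mapsto k(x,A)$ is immediate as a pointwise supremum of measurable functions. This proves (a).

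For (b) and (c), the key tool is Lemma \ref{l.sup}'s own companion results from \cite{gk15}, but more concretely one argues as follows. For $f \in B_b(E)_+$, monotone convergence (now for integrals against the increasing measures $k_n(x,\cdot)$, using that $f$ is bounded so integrability is automatic) gives
\[
(Kf)(x) = \int_E f\,dk(x,\cdot) = \sup_n \int_E f\,dk_n(x,\cdot) = \sup_n (K_nf)(x)
\]
for every $x$, which is the first half of (c); the second half for $K_n'\nu$ on $\nu \in \mathscr{M}(E)_+$ follows the same way from the definition $K'\nu(A) = \int_E k(x,A)\,d\nu(x)$ and monotone convergence in the $\nu$-integral. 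To upgrade the pointwise/setwise supremum in (c) to a supremum in the operator lattices $\mathscr{L}(B_b(E))$ and $\mathscr{L}(\mathscr{M}(E))$ as claimed in (b), I would note that $K_n \le K$ is clear from (c), and that any operator $L$ with $K_n \le L$ for all $n$ satisfies, for $f \in B_b(E)_+$ and $x \in E$, that $(Lf)(x) \ge \sup_n (K_nf)(x) = (Kf)(x)$ (evaluation at a point is a positive functional on $B_b(E)$), hence $L \ge K$; the argument for $\mathscr{M}(E)$ is identical using evaluation on Borel sets, which is positive on $\mathscr{M}(E)$.

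The one genuinely delicate point --- and the step I would flag as the main obstacle --- is establishing that $K$ really is a \emph{kernel operator}, i.e.\ that it lies in $\mathscr{L}(B_b(E))$ and is associated with the kernel $k$ in the sense of \eqref{eq.kernelop}, rather than merely being defined on the positive cone. By Lemma \ref{l.kernelop} it suffices to check that $K$ is pointwise continuous. Here one uses that $k(x,\cdot)$ is a genuine finite measure (from part (a)) together with dominated convergence: if $(f_m) \subset B_b(E)$ is bounded and $f_m \to f$ pointwise, then $\int_E f_m\,dk(x,\cdot) \to \int_E f\,dk(x,\cdot)$ for each $x$, since the common bound is $k(x,\cdot)$-integrable. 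This is exactly the converse direction in the proof of Lemma \ref{l.kernelop}. Once $K$ (and analogously $K'$) is known to be a bona fide bounded kernel operator, linearity extends the identities in (c) from the positive cone to all of $B_b(E)$ and $\mathscr{M}(E)$, and the lattice-supremum statements in (b) follow as above. I would expect the write-up to be short, with the only subtlety being careful bookkeeping of the monotonicity hypothesis on the $k_n$.
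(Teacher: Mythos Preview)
Your argument in the monotone case is correct and in fact more transparent than the paper's own proof. The paper does not verify (a) directly but cites \cite[Lemma 3.5]{gk15}, and for (b) and (c) on the $\mathscr{M}(E)$-side it invokes \cite[Theorem 3.6]{gk15} as a black box; for the $B_b(E)$-side it does not use monotone convergence of the measures $k_n(x,\cdot)$ against a fixed $f$, but instead first checks $Kf=\sup_n K_nf$ on indicator functions, extends to positive simple functions by linearity, and then passes to general $f\in B_b(E)_+$ via a $3\varepsilon$-argument exploiting that all $K_n$ and $K$ are contractions. Your route via monotone convergence for the increasing family of measures $k_n(x,\cdot)$ is shorter and avoids the external references; it also makes the ``delicate point'' you flag disappear, since once $k(x,\cdot)$ is a genuine finite measure, $K$ is a kernel operator by definition and Lemma~\ref{l.kernelop} is not needed.

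There is, however, a genuine gap in your reduction to the monotone case. You propose replacing $k_n$ by $\max_{j\le n} k_j$ when the original sequence is not increasing, claiming this ``does not change the supremum and is still sub-Markovian''. This fails on both counts. The setwise maximum $A\mapsto \max_{j\le n} k_j(x,A)$ is in general \emph{not} countably additive (take $E=\{a,b\}$, $k_1(x,\cdot)=\delta_a$, $k_2(x,\cdot)=\delta_b$; the setwise maximum assigns mass $1$ to each singleton but also to $E$). If instead you mean the lattice supremum $\bigvee_{j\le n} k_j(x,\cdot)$ in $\mathscr{M}(E)$, that \emph{is} a measure but need not be sub-Markovian: in the same example $\delta_a\vee\delta_b$ has total mass $2$. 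Either way the replacement is not a sub-Markovian kernel, and in fact the conclusion of part~(a) is simply false without monotonicity, as the two-point example shows. The statement as used in the paper (see Proposition~\ref{p.kernelop}) is only ever applied to an increasing sequence, and the cited \cite[Lemma 3.5]{gk15} carries that hypothesis; you should make the monotonicity assumption explicit rather than attempt to remove it.
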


\begin{proof}
(a) follows from \cite[Lemma 3.5]{gk15}. 

Note that since the kernels $k_n$ are sub-Markovian, we have
$K_n' \leq I$ for every $n\in \N$. It follows from \cite[Theorem 3.6]{gk15} that $\sup K_n'$ exists in $\mathscr{L}(\mathscr{M}(E))$ and is again a kernel operator. The proof of \cite[Theorem 3.6]{gk15} shows that the kernel associated to $\sup K_n'$ is exactly $k$. There we also see that $\sup_n K_n \nu = K\nu$ for every $\nu \in \mathscr{M}(E)_+$. Thus our assertions in (b) and (c) concerning $K'$ hold true. Let us now note that if $f= \one_A$ is an indicator function, then
\[
Kf= k(\cdot, A) =\sup_n k_n (\cdot, A) = K_n f.
\]
By linearity, the same holds true whenever $f\geq 0$ is a simple function. For a general $f\in B_b(E)_+$, we find, given $\eps >0$ a simple function $g\geq 0$ with $0\leq g\leq f$ and $\|f-g\|_\infty \leq \eps$. Since the kernels $k_n$, thus also $k$, are sub-Markovian, the operators $K_n$ and $K$ are contractions, whence $\|Kf-Kg\|\leq \eps$ and $\|K_nf-K_ng\|\leq \eps$ for all $n\in \N$. Thus
\[
\|Kf - \sup K_n f\| \leq \| Kf - Kg\|  + \|Kg - \sup K_n g\| + \| \sup K_n g - \sup K_n f\| \leq 2\eps.
\]
As $\eps>0$ was arbitrary, this proves the the rest of the assertions.
\end{proof}

We obtain:

\begin{prop}\label{p.kernelop}
There is a family of kernel operators $(K_\mu (t))_{t>0}$, associated to sub-Markovian kernels on $\overline{\Omega}$, 
such that
\begin{enumerate}
[(a)]
\item $K_\mu(t)f$ is a version of $T_\mu (t)\iota (f)$ for every $f\in B_b(\overline{\Omega})$.
\item $K_\mu (t+s) = K_\mu (t)K_\mu (s)$ for all $t,s>0$.
\end{enumerate}
\end{prop}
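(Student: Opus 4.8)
The plan is to realise each operator $K_\mu(t)$ as a countable, increasing supremum of the kernel operators coming from the bounded-domain semigroups $T_n$, so that the \emph{everywhere}-defined semigroup law is inherited from the genuine semigroup law on the sets $\Omega_n$, rather than only from the abstract identity $T_\mu(t+s)=T_\mu(t)T_\mu(s)$ on $L^\infty(\Omega)$. First I would fix $t>0$ and consider the extension-by-zero $\widetilde T_n(t)\in\mathscr{L}(L^\infty(\Omega))$ of $T_n(t)$ from the proof of Theorem \ref{t.generation}. Since $T_n$ is analytic, $T_n(t)$ maps $L^\infty(\Omega_n)$ into $D(A_n)\subset C(\overline{\Omega_n})$, and by \eqref{eq.mun} elements of $D(A_n)$ vanish on $\partial\Omega_n\setminus\partial\Omega$; hence the extension by zero of $T_n(t)g$ is continuous on $\overline{\Omega}$, so $\widetilde T_n(t)$ takes values in $C_b(\overline{\Omega})$. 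Together with the strong Feller property of $T_n(t)$ (Proposition \ref{p.boundeddomain}(c)) this shows $\widetilde T_n(t)$ is a strong Feller operator in the sense of Definition \ref{def.sf}, so by Lemma \ref{l.kernellinfty} and the convention preceding Definition \ref{def.sf} it lifts to a kernel operator $\widetilde K_n(t):=\widetilde T_n(t)\circ\iota$ on $B_b(\overline{\Omega})$ taking values in $C_b(\overline{\Omega})$, with kernel $\widetilde k_n(t)$ which is sub-Markovian because $\widetilde T_n(t)$ is a positive contraction.

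Next I would check that $\widetilde k_n(t)\le\widetilde k_{n+1}(t)$ for every $n$. As in the proof of Theorem \ref{t.generation} (via Theorem \ref{t.elliptic} and Proposition \ref{prop:positivesemigroups}) the extended semigroups satisfy $\widetilde T_n(t)\le\widetilde T_{n+1}(t)$ in $\mathscr{L}(L^\infty(\Omega))$, so for $A\in\mathscr{B}(\overline{\Omega})$ the functions $\widetilde k_n(t)(\argument,A)=\widetilde K_n(t)\one_A$ and $\widetilde k_{n+1}(t)(\argument,A)$ are versions of $\widetilde T_n(t)\iota(\one_A)\le\widetilde T_{n+1}(t)\iota(\one_A)$, hence agree with the latter $m$-a.e.; being continuous on $\overline{\Omega}$ and $m$ having full support, the inequality then holds everywhere. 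Now Lemma \ref{l.sup}, applied to the increasing sequence $(\widetilde k_n(t))_n$, yields a sub-Markovian kernel $k_\mu(t):=\sup_n\widetilde k_n(t)$ on $\overline{\Omega}$ with associated operator $K_\mu(t)$ satisfying $K_\mu(t)f=\sup_n\widetilde K_n(t)f$ for all $f\in B_b(\overline{\Omega})_+$. Part (a) then follows by a soft order argument: for $f\ge 0$ one has $T_\mu(t)\iota(f)=\sup_n\widetilde T_n(t)\iota(f)$ in $L^\infty(\Omega)$ by the construction of $T_\mu$ (Lemma \ref{l.increase}(1), used in Proposition \ref{prop:supremumsemigroup}), while $\iota(\widetilde K_n(t)f)=\widetilde T_n(t)\iota(f)$ and the supremum in $L^\infty(\Omega)$ of the increasing sequence $\widetilde K_n(t)f\in C_b(\overline{\Omega})$ is the class of its pointwise supremum $K_\mu(t)f$; hence $\iota(K_\mu(t)f)=T_\mu(t)\iota(f)$, and the general case follows by linearity.

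For part (b) I would first record the semigroup identity on the bounded domains: from $\widetilde T_n(t+s)=\widetilde T_n(t)\widetilde T_n(s)$ and the fact that each $\widetilde T_n(r)$ is strong Feller, both $\widetilde K_n(t+s)f$ and $\widetilde K_n(t)\widetilde K_n(s)f$ are the unique continuous representative of $\widetilde T_n(t+s)\iota(f)$, so $\widetilde K_n(t+s)=\widetilde K_n(t)\widetilde K_n(s)$ as operators on $B_b(\overline{\Omega})$. It then remains to pass to the supremum, i.e.\ to show $\sup_n\widetilde K_n(t)\widetilde K_n(s)f=K_\mu(t)K_\mu(s)f$ for $f\ge 0$; this is the analogue of Lemma \ref{l.increase}(2) for kernel operators on $B_b(\overline{\Omega})$. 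One inequality is immediate from positivity and $\widetilde K_n(r)\le K_\mu(r)$. For the reverse, for fixed $m$ the monotonicity of the kernels gives
\[
\sup_n\widetilde K_n(t)\widetilde K_n(s)f\ \ge\ \sup_n\widetilde K_n(t)\bigl(\widetilde K_m(s)f\bigr)\ =\ K_\mu(t)\bigl(\widetilde K_m(s)f\bigr)
\]
by Lemma \ref{l.sup}(c), and letting $m\to\infty$ together with the $\sigma$-order continuity of the kernel operator $K_\mu(t)$ (monotone convergence in the integral against $k_\mu(t)(x,\argument)$, using $\widetilde K_m(s)f\uparrow K_\mu(s)f$) yields $\ge K_\mu(t)K_\mu(s)f$. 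Combining, $K_\mu(t+s)f=\sup_n\widetilde K_n(t+s)f=\sup_n\widetilde K_n(t)\widetilde K_n(s)f=K_\mu(t)K_\mu(s)f$ for $f\ge 0$, hence for all $f\in B_b(\overline{\Omega})$ by linearity.

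I expect the only genuine subtlety to be precisely the point dictating this two-step strategy: the abstract identity $T_\mu(t+s)=T_\mu(t)T_\mu(s)$ combined with part (a) only gives $K_\mu(t+s)f=K_\mu(t)K_\mu(s)f$ $m$-almost everywhere, and since $m$-null sets can be dense this cannot be upgraded to an identity of Borel functions without extra information. The extra information is that on each $\Omega_n$ the strong Feller property makes all the relevant functions continuous, so there the semigroup identity holds in the strict, everywhere sense; monotonicity of the kernels $\widetilde k_n(t)$ (proved using full support of $m$) is then exactly what allows this to survive the passage to the limit via Lemma \ref{l.sup}. All remaining verifications -- sub-Markovianity, measurability, and the order manipulations -- are routine.
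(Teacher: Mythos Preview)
Your proof is correct and follows essentially the same route as the paper's: define $K_\mu(t)$ as the supremum (via Lemma~\ref{l.sup}) of the strong Feller kernel operators $K_n(t)=\widetilde T_n(t)\circ\iota$ coming from the bounded-domain semigroups, deduce (a) from $K_\mu(t)f=\sup_n K_n(t)f$ for $f\ge 0$, and obtain (b) by sandwiching $\sup_n K_n(t)K_n(s)f$ between $K_\mu(t+s)f$ and $K_\mu(t)K_\mu(s)f$ using the pointwise continuity of the kernel operator $K_\mu(t)$. Your write-up is in fact more explicit than the paper's in two places: you justify the everywhere inequality $\widetilde k_n(t)\le\widetilde k_{n+1}(t)$ via continuity of the $\widetilde K_n(t)\one_A$ and full support of $m$, and you spell out why $\widetilde K_n(t+s)=\widetilde K_n(t)\widetilde K_n(s)$ holds as an identity of Borel functions; the paper uses both facts without comment.
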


\begin{proof}
We again consider the semigroups $T_n$, generated by the approximate operators $A_n$, extended to all of $\overline{\Omega}$ by zero. As $T_n(t)$ takes values in $C_b(\overline{\Omega})$, we can consider the operator
$K_n(t) \coloneqq T_n(t)\circ\iota \in \mathscr{L}(B_b(\overline{\Omega}))$ for every $t>0$. By Proposition \ref{p.boundeddomain}(c) these are kernel operators and as a consequence of Theorem \ref{t.elliptic} the sequence is  increasing. It follows from Lemma \ref{l.sup} that $K_\mu (t) \coloneqq \sup_n K_n(t)$ exists in $\mathscr{L}(B_b(\overline{\Omega}))$ and is a kernel operator. 

As $K_\mu (t) f = \sup_n K_n (t)f$ for all $f\in B_b(\overline{\Omega})_+$ by Lemma \ref{l.sup}(c), $K_\mu (t)f$ is a version of $T_\mu (t)\iota(f)$ for all $f\geq 0$. By linearity, this is also true for general $f$, proving (a).

As for (b), first note that for $t,s>0$ and $n\in \N$, we have $K_n(t)K_n(s)\leq K_\mu (t)K_\mu (s)$, whence
\[
K_\mu (t+s) = \sup_n K_n(t+s) = \sup_n K_n(t)K_n(s) \leq K_\mu (t)K_\mu (s).
\]
On the other hand, for $f\geq 0$, the sequence $K_n(s)f$ is bounded and converges pointwise to $K_\mu (s)f$. As $K_\mu (t)$ is a kernel operator, it follows from Lemma \ref{l.kernelop} that $\sup_n K_\mu (t)K_n(s)f = K_\mu (t)K_\mu(s)f$. From this follows $\sup_nK_n(t)K_n(s)f\geq K_\mu(t)K_\mu(s)f$, which proves the other inequality and thus (b).
\end{proof}

\begin{rem}\label{r.sgcbm}
Using the monotone convergence theorem, we see that
\[
\langle R(\lambda, A_\mu) \iota (f), \nu \rangle = \int_0^\infty e^{-\lambda t}\langle K_\mu (t)f, \nu\rangle\, dt
\]
for all $\lambda >0$ and $f\in B_b(\overline{\Omega})_+$, $\nu\in \mathscr{M}(\overline{\Omega})_+$. By linearity, this holds true also for general $f\in B_b(\overline{\Omega})$ and $\nu \in \mathscr{M}(\overline{\Omega})$. 

This shows that $(K_\mu(t))_{t>0}$ defines an integrable semigroup on the norming dual pair $(B_b(\overline{\Omega}), \mathscr{M}(\overline{\Omega}))$ in the sense of \cite[Definition 5.11]{kunze2011}. Its Laplace transform is given by $(R(\lambda, A_\mu)\circ\iota)_{\lambda >0}$ which, of course, is not injective and thus cannot be the resolvent of an operator. However, we may associate a multi-valued generator to the semigroup $K_\mu (t)$. For this multi-valued generator a characterization of the generator similar to Proposition \ref{prop:AWF} remains valid, see \cite[Proposition 5.7]{kunze2011}.
\end{rem}

\begin{thm}\label{t.sfproperty}
If $\one \in D(A_\mu)$, then $T_\mu$ is Markovian and enjoys the strong Feller property. Note that by Lemma \ref{l.deqdmax} the condition $\one\in D(A_\mu)$ is equivalent to $D(A_\mu) = D_{\max}$.
\end{thm}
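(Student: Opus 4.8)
The plan is to prove the two assertions separately, working throughout with the kernel operators $K_\mu(t)$ from Proposition~\ref{p.kernelop} and their approximants $K_n(t)=T_n(t)\circ\iota$; the latter are strong Feller (Proposition~\ref{p.boundeddomain}(c), extended by zero to $\overline{\Omega}$), increase to $K_\mu(t)$, and carry sub-Markovian kernels $k_n(t)\le k_\mu(t)$ (Lemma~\ref{l.sup}, Proposition~\ref{p.kernelop}).

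\emph{Markovianity.} Since $\cA\one=0$ and $\one\in D(A_\mu)$ we have $A_\mu\one=0$, so Proposition~\ref{prop:AWF} (applied with $y^*=\one$ and $z^*=0$) yields $T_\mu(t)\one=\one$ in $L^\infty(\Omega)$ for every $t>0$; together with positivity (Theorem~\ref{t.generation}) this says that $T_\mu$ is Markovian. To turn this into a statement about the kernels I would show that $m_t\coloneqq K_\mu(t)\one$ equals $\one$ on \emph{all} of $\overline{\Omega}$, not merely $m$-a.e. From $R(\lambda,A_\mu)\one=\tfrac1\lambda\one$ and the pointwise Laplace representation in Remark~\ref{r.sgcbm} (taken at $\nu=\delta_x$) one obtains $\tfrac1\lambda=\int_0^\infty e^{-\lambda t}m_t(x)\,dt$ for every $x\in\overline{\Omega}$ and $\lambda>0$, hence $m_t(x)=1$ for a.e.\ $t$, for each fixed $x$. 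The semigroup law and positivity give $m_{t+s}=K_\mu(s)m_t\le K_\mu(s)\one=m_s$, so $t\mapsto m_t(x)$ is non-increasing; a non-increasing function bounded by $1$ that equals $1$ for a.e.\ $t$ is identically $1$. Thus $k_\mu(t)(x,\overline{\Omega})=1$ for all $x\in\overline{\Omega}$ and $t>0$.

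\emph{Strong Feller property.} It suffices to show that $K_\mu(t)f\in C_b(\overline{\Omega})$ for every $f\in B_b(\overline{\Omega})$ and $t>0$: condition~(ii) of Definition~\ref{def.sf} then holds because $K_\mu(t)=T_\mu(t)\circ\iota$ is a kernel operator (Proposition~\ref{p.kernelop} and Lemma~\ref{l.kernelop}), and the version on $L^\infty(\Omega)$ follows from Lemma~\ref{l.kernellinfty}. Now the functions $K_n(t)\one$ are continuous and increase pointwise to $m_t\equiv\one$, so Dini's theorem gives $K_n(t)\one\to\one$ uniformly on every compact set $C\subseteq\overline{\Omega}$. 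Since $k_\mu(t)(x,\cdot)-k_n(t)(x,\cdot)$ is a positive measure, for $\norm{f}_\infty\le1$ and $x\in C$ we have
\[
\bigl|K_\mu(t)f(x)-K_n(t)f(x)\bigr|\le\bigl(k_\mu(t)-k_n(t)\bigr)(x,\overline{\Omega})=1-K_n(t)\one(x),
\]
and the right-hand side tends to $0$ uniformly on $C$. Hence $K_n(t)f\to K_\mu(t)f$ locally uniformly, so the limit is continuous, and it is bounded by $\norm{f}_\infty$ by Markovianity; rescaling removes the normalisation. This shows that each $T_\mu(t)$, $t>0$, is a strong Feller operator.

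\emph{Main obstacle.} The delicate step is not the identity $T_\mu(t)\one=\one$ in $L^\infty(\Omega)$, which is immediate, but its upgrade to the pointwise identity $K_\mu(t)\one\equiv\one$ on $\overline{\Omega}$, and then the upgrade of the merely pointwise convergence $K_n(t)f\to K_\mu(t)f$ to locally uniform convergence: a monotone limit of strong Feller operators need not be strong Feller (cf.\ \cite[Example~5.4]{akk16}), and it is precisely the absence of mass defect in the limit---together with Dini's theorem---that forces the limit operator to take continuous values. The remaining points (the measurability of $t\mapsto m_t(x)$, the identification of $T_\mu(t)$ on $L^\infty(\Omega)$ with $K_\mu(t)$ on $B_b(\overline{\Omega})$, and the reduction of Definition~\ref{def.sf}(ii) to the kernel-operator property) are routine.
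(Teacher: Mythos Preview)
Your proof is correct, but it differs from the paper's in both key steps.

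For the upgrade from $T_\mu(t)\one=\one$ a.e.\ to $K_\mu(t)\one\equiv\one$ everywhere, the paper simply invokes the multivalued-generator framework alluded to in Remark~\ref{r.sgcbm}: since $R(\lambda,A_\mu)\circ\iota$ maps $\lambda\one$ to $\one$ pointwise (both functions being continuous), the analogue of Proposition~\ref{prop:AWF} for the norming dual pair $(B_b(\overline{\Omega}),\mathscr{M}(\overline{\Omega}))$ yields $K_\mu(t)\one=\one$ directly. Your route via the pointwise Laplace identity, scalar Laplace-uniqueness at each $x$, and the monotonicity $m_{t+s}\le m_s$ is more hands-on but entirely self-contained: it does not require the cited multivalued theory from \cite{kunze2011}.

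For continuity of $K_\mu(t)f$, the paper uses a short semicontinuity trick: for $0\le f\le\one$, the function $K_\mu(t)f=\sup_n K_n(t)f$ is lower semicontinuous as a supremum of continuous functions, while $\one-K_\mu(t)f=K_\mu(t)(\one-f)=\sup_n K_n(t)(\one-f)$ is lower semicontinuous for the same reason, so $K_\mu(t)f$ is also upper semicontinuous and hence continuous. Your Dini-plus-total-variation estimate $|K_\mu(t)f-K_n(t)f|\le 1-K_n(t)\one$ is a genuinely different argument; it is slightly longer but yields more, namely locally uniform convergence $K_n(t)f\to K_\mu(t)f$, not merely continuity of the limit. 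The paper's argument, by contrast, gives continuity without ever establishing any mode of convergence beyond pointwise.
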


\begin{proof}
Note that if $\one \in D(A_\mu)$, then $A_\mu\one =0$. As $A_\mu$ is the generator of $T_\mu$, we must have $T_\mu(t)\one = \one$ for all $t>0$ in view of Proposition \ref{prop:AWF}. We should point out that is is an equality almost everywhere. 
However, as explained in Remark \ref{r.sgcbm}, we can apply the corresponding result to the semigroup $(K_\mu (t))_{t>0}$
on $B_b(\overline{\Omega})$ and obtain $K_\mu (t)\one = \one$ everywhere on $\overline{\Omega}$ for every $t>0$.

Now let $0\leq f \leq \one$ be given, so that $K_n(t) f\uparrow K_\mu (t)f$ pointwise. It follows that $K_\mu (t)f$ is lower semi-continuous. On the other hand
\[\one - K_\mu(t)f =  K_\mu(t)(\one-f) = \sup_n K_n(t)(\one-f)\]
is also lower semi-continuous. As $\one$ is continuous, it follows that $K_\mu (t)f$ is upper semi-continuous.

Altogether, we have proved that $K_\mu(t)f$ is continuous whenever $0\leq f\leq 1$. Scaling and decomposing a function into positive and negative part, we see that $K_\mu (t)$ is a strong Feller operator. This finishes the proof.
\end{proof}

\section{Asymptotic behavior}\label{s.asympt}

In this section, we will study the asymptotic behavior of the semigroup $T_\mu$ under the assumption that $\ker A_\mu = \lh\{\one\}$.  We note that Corollary \ref{c.kernel} provides a sufficient condition for this to happen. If $\ker A_\mu = \lh\{\one\}$, then in particular $T_\mu$ is Markovian and enjoys the strong Feller property and we can used recent results (\cite{gg, gk15}) on the asymptotic behavior of such semigroups. Of particular importance are \emph{invariant probability measures} of the semigroup. We recall that a measure $\nu^\star \in \mathscr{M}(\overline{\Omega})$ is called \emph{invariant}, if $T_\mu (t)'\nu^\star = \nu^\star$ for all $t>0$, i.e.\ $\nu^* \in \fix (T_\mu')$. 

\begin{thm}\label{t.asymptotic}
Assume that $\ker A_\mu = \lh\{\one\}$. Then there is at most one invariant probability measure for $T_\mu$. 
If there is an invariant probability measure $\nu^\star$, then we have for $f\in L^\infty(\Omega)$ that 
\[
\lim_{t\to \infty} T_\mu(t) f = \int_{\overline{\Omega}} f\, d\nu^\star \cdot \one 
\]
uniformly on compact subsets of $\overline{\Omega}$ and for $\nu \in \mathscr{M}(\overline{\Omega})$ we have
\[
\lim_{t\to \infty} T_\mu' \nu = \nu (\overline{\Omega})\nu^\star
\]
in total variation norm.
\end{thm}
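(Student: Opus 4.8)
The plan is to derive Theorem~\ref{t.asymptotic} from the convergence results of \cite{gg, gk15} for Markovian semigroups of kernel operators, applied to the semigroup $(K_\mu(t))_{t>0}$ on $B_b(\overline{\Omega})$ constructed in Proposition~\ref{p.kernelop}; recall that by Remark~\ref{r.sgcbm} this is an integrable semigroup on the norming dual pair $(B_b(\overline{\Omega}),\mathscr{M}(\overline{\Omega}))$ with associated adjoint semigroup $(K_\mu(t)')_{t>0}$ on $\mathscr{M}(\overline{\Omega})$, and that $T_\mu'(t)=K_\mu(t)'$.

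First I would record the structural properties that are needed. Since $\cA\one=0$, the hypothesis $\ker A_\mu=\lh\{\one\}$ forces $\one\in D(A_\mu)$, so Theorem~\ref{t.sfproperty} applies and $T_\mu$ is Markovian with the strong Feller property; hence each $K_\mu(t)$ with $t>0$ is a Markovian strong Feller operator (in particular $K_\mu(t)\one=\one$ on all of $\overline{\Omega}$), and by Lemma~\ref{l.ultra} together with the semigroup law $K_\mu(t)=K_\mu(t/2)K_\mu(t/2)$ is ultra Feller for every $t>0$. The fixed space of $(K_\mu(t))_{t>0}$ in $B_b(\overline{\Omega})$ is exactly $\lh\{\one\}$: if $K_\mu(t)g=g$ for all $t>0$, then $g=K_\mu(t/2)g\in C_b(\overline{\Omega})$ by the strong Feller property, and $\iota(g)$ is fixed by $T_\mu$, so $\iota(g)\in\ker A_\mu=\lh\{\one\}$ by Proposition~\ref{prop:AWF}, whence $g$ is constant. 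Finally, any invariant probability measure $\nu^\star$ (that is, $K_\mu(t)'\nu^\star=\nu^\star$ for all $t>0$) is absolutely continuous with respect to Lebesgue measure on $\Omega$ and gives no mass to $\partial\Omega$: if $A\subset\overline{\Omega}$ is such that $A\cap\Omega$ is Lebesgue null, then $\iota(\one_A)=0$ in $L^\infty(\Omega)$, so $K_\mu(t)\one_A$ is a continuous function that vanishes almost everywhere on $\Omega$ and therefore vanishes identically (Lebesgue measure has full support in $\overline{\Omega}$); consequently $\nu^\star(A)=\int_{\overline{\Omega}}K_\mu(t)\one_A\,d\nu^\star=0$, and taking $A=\partial\Omega$ gives $\nu^\star(\partial\Omega)=0$. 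In particular $\int_{\overline{\Omega}}f\,d\nu^\star$ is well defined and independent of the representative chosen for $f\in L^\infty(\Omega)$.

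With these facts in hand, I would invoke the convergence theorem of \cite{gg, gk15} for Markovian kernel semigroups that are ultra Feller for $t>0$ and have one-dimensional fixed space. This gives at once that there is at most one invariant probability measure (which also follows directly from the first convergence below, since it forces $\int g\,d\nu_1^\star=\int g\,d\nu_2^\star$ for every $g\in B_b(\overline{\Omega})$), and, whenever $\nu^\star$ exists, the convergences $K_\mu(t)g\to\big(\int_{\overline{\Omega}}g\,d\nu^\star\big)\one$ for the strict topology $\beta_0$ (for every $g\in B_b(\overline{\Omega})$) and $K_\mu(t)'\nu\to\nu(\overline{\Omega})\nu^\star$ in total variation norm (for every $\nu\in\mathscr{M}(\overline{\Omega})$). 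If the cited results only yield $\sigma(C_b(\overline{\Omega}),\mathscr{M}(\overline{\Omega}))$-convergence of $K_\mu(t)g$, the upgrade to $\beta_0$-convergence follows verbatim as in the proof of Theorem~\ref{t.sfcont}(a): by the ultra Feller property and Arzel\`a--Ascoli the orbit $\{K_\mu(t)g:t\geq1\}$ is relatively compact for $\beta_0$, on which $\beta_0$ and the weak topology coincide. To conclude, given $f\in L^\infty(\Omega)$ pick a bounded Borel representative $g$ (extended by $0$ to $\partial\Omega$); for $t>0$ the continuous function $K_\mu(t)g$ represents $T_\mu(t)f$, the orbit is norm bounded by $\|f\|_\infty$ since $\|K_\mu(t)\|\leq1$, so on it $\beta_0$ agrees with the compact-open topology (Section~\ref{s.sf}, \cite[Theorem~2.10.4]{jarchow}), and $\int_{\overline{\Omega}}g\,d\nu^\star=\int_{\overline{\Omega}}f\,d\nu^\star$ as noted above; this is the first asserted convergence. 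The second is just $K_\mu(t)'\nu\to\nu(\overline{\Omega})\nu^\star$ restated via $T_\mu'(t)=K_\mu(t)'$.

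The main obstacle I expect is matching the precise hypotheses of the cited convergence theorems: one must verify that one-dimensionality of the fixed space of $(K_\mu(t))$ in $B_b(\overline{\Omega})$, together with the presence of $\nu^\star$, is exactly the ergodicity/irreducibility condition used there (equivalently, that $\mathscr{M}(\overline{\Omega})$ contains no non-trivial $(K_\mu(t)')_{t>0}$-invariant band), and that being ultra Feller for every $t>0$ supplies whatever compactness or lower-bound property the results require. The remaining bookkeeping --- moving between $L^\infty(\Omega)$, $B_b(\overline{\Omega})$ and $\mathscr{M}(\overline{\Omega})$ and using the strong Feller property to promote almost-everywhere identities to identities everywhere on $\overline{\Omega}$ --- is routine but must be carried out with care.
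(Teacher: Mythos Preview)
Your proposal is correct and follows essentially the same route as the paper: establish the strong Feller property from $\one\in D(A_\mu)$, identify the fixed space as $\lh\{\one\}$, invoke abstract convergence results for strong Feller kernel semigroups, and upgrade weak convergence to $\beta_0$-convergence via the ultra Feller property exactly as in Theorem~\ref{t.sfcont}. The only notable differences are bookkeeping: the paper works directly with $T_\mu$ and argues uniqueness via the weak-ergodicity dichotomy of \cite{gk14} (either $\fix(T_\mu')$ separates $\fix(T_\mu)$, hence is one-dimensional, or $\fix(T_\mu')=\{0\}$) and then obtains total-variation convergence from the Tauberian theorem \cite[Corollary~3.7]{g14}, whereas you black-box both steps into \cite{gg}; your explicit verification that any invariant measure is absolutely continuous with respect to Lebesgue measure and charges $\partial\Omega$ zero is a useful detail the paper leaves implicit.
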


\begin{proof}
If $\ker A_\mu = \lh\{\one\}$, then in particular $\one\in D(A_\mu)$, so that $T_\mu$ enjoys the strong Feller property by 
Theorem \ref{t.sfproperty}. Moreover, in view of Proposition \ref{prop:AWF}, we have $\fix (T_\mu) = \lh\{\one\}$.
We now have to distinguish the situation where the semigroup $T_\mu$ is weakly ergodic (in the sense of \cite{gk14}) and the situation where it is not weakly erdodic.  As $T_\mu$ enjoys the strong Feller property, we infer from \cite[Theorems 4.4 and 5.7]{gk14} that $T_\mu$ is weakly ergodic if and only if $\fix (T_\mu)'$ separates $\fix (T_\mu)$.

If $\fix (T_\mu')$ separates $\fix (T_\mu)$, then the semigroup is weakly ergodic and it follows from \cite[Theorem 4.4]{gk14} that $\fix (T_\mu)$ separates $\fix (T_\mu')$. As $\fix (T_\mu)$ is one-dimensional, it follows in this situation that
$\fix (T_\mu')$ is also one-dimensional. If, on the other hand, $\fix (T_\mu)'$ does not separate $\fix (T_\mu)$, then we must have $\fix (T_\mu') = \{0\}$. In either case, there can be at most one invariant probability measure. 

Now assume that there is an invariant probability measure $\nu^\star$. Then $T_\mu$ is weakly ergodic with ergodic projection $P = \one\otimes \nu^\star$, i.e.\ $Pf = \int_{\overline{\Omega}} f\, d\nu^\star \cdot \one$. It follows from
\cite[Corollary 3.7]{g14} (a related result can be found in Version 1 of \cite{gg} on the arxiv), that for every $\nu \in \mathscr{M}(\overline{\Omega})$ we have 
$T_\mu'(t)\nu \to P'\nu$ in total variation norm as $t\to \infty$. From this it easily follows that
$T_\mu(t) f \to Pf$ with respect to $\sigma (C_b(\overline{\Omega}), \mathscr{M}(\overline{\Omega}))$ as $t\to \infty$.
However, as $T_\mu$ enjoys the strong Feller property, for every sequence $t_n \to \infty$ the sequence $T_\mu(t_n)f$ has a subsequence which converges with respect to $\beta_0$. But as $T_\mu(t)f \to Pf$ with respect to $\sigma (C_b(\overline{\Omega}), \mathscr{M}(\overline{\Omega}))$, the only possible accumulation point is $Pf$ and we find that
$T_\mu(t)f \to Pf$ with respect to $\beta_0$ and thus also uniformly on compact subsets of $\overline{\Omega}$. 
\end{proof}

To establish the existence of an invariant probability measure again the existence of a suitable Lyapunov function is sufficient. Note, however, that such a Lyapunov function has to satisfy more restrictive assumptions then in Hypothesis \ref{hyp:Lyapunov}. Indeed, if $\Omega = \R^d$ and $\mathscr{A} = \Delta$, the Laplace operator, then $V(x) = |x|^2$ can be used as a Lyapunov function in the sense of Hypothesis \ref{hyp:Lyapunov}. However, there is no invariant probability measure for the heat semigroup on $\R^d$.

In \cite{mpw02}, and also other references, using the Krylov--Bogoliubov theorem, invariant measures are constructed as certain weak accumulation points of Ces\`aro means of the semigroup. In our situation, it is more convenient to work with Abel-means.

\begin{lem}\label{l.abel}
Suppose that $\lambda_n \subset (0,\infty)$ is such that $\lambda_n\downarrow 0$ and there is a probability measure
$\nu$ such that $\lambda_nR(\lambda_n, A_\mu)'\nu$ converges to $\nu^\star$ with respect to the $\sigma (\mathscr{M}(\overline{\Omega}), C_b(\overline{\Omega}))$-topology. Then $\nu^\star$ is an invariant measure for $T_\mu'$.
\end{lem}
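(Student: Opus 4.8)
The plan is to show that $\langle K_\mu(t)f,\nu^\star\rangle = \langle f,\nu^\star\rangle$ for every $f\in C_b(\overline{\Omega})$ and every $t>0$, where $(K_\mu(t))_{t>0}$ is the kernel semigroup on $B_b(\overline{\Omega})$ from Proposition~\ref{p.kernelop}, whose adjoints on $\mathscr{M}(\overline{\Omega})$ are the operators $T_\mu'(t)$. Since $C_b(\overline{\Omega})$ separates $\mathscr{M}(\overline{\Omega})$, this identity is exactly $T_\mu'(t)\nu^\star = \nu^\star$, i.e.\ $\nu^\star\in\fix(T_\mu')$. A preliminary observation I would use throughout: this section runs under the standing assumption $\ker A_\mu = \lh\{\one\}$, so $\one\in D(A_\mu)$ and hence, by Theorem~\ref{t.sfproperty}, the semigroup has the strong Feller property; in particular $K_\mu(t)f\in C_b(\overline{\Omega})$ whenever $f\in C_b(\overline{\Omega})$, which is what makes $\langle K_\mu(t)f,\nu^\star\rangle$ accessible, since $\nu^\star$ is only a priori a functional on $C_b(\overline{\Omega})$.

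The core of the argument is an Abelian-limit manipulation of the Laplace-transform identity of Remark~\ref{r.sgcbm}. Fix $f\in C_b(\overline{\Omega})$ and $t>0$, and put $\phi(s)\coloneqq\langle K_\mu(s)f,\nu\rangle$; as the kernels of the $K_\mu(s)$ are sub-Markovian and $\nu$ is a probability measure, $\abs{\phi(s)}\le\norm{f}_\infty$ for all $s>0$. Applying Remark~\ref{r.sgcbm} to $g=f$ and to $g=K_\mu(t)f$, and using the semigroup law $K_\mu(s)K_\mu(t)=K_\mu(s+t)$ together with the substitution $u=s+t$, one obtains for every $\lambda>0$
\begin{align*}
\lambda\langle R(\lambda,A_\mu)\iota(f),\nu\rangle &- \lambda\langle R(\lambda,A_\mu)\iota(K_\mu(t)f),\nu\rangle\\
&= \lambda\int_0^t e^{-\lambda s}\phi(s)\,ds + \lambda(1-e^{\lambda t})\int_t^\infty e^{-\lambda s}\phi(s)\,ds.
\end{align*}
The first term on the right is bounded in absolute value by $\norm{f}_\infty(1-e^{-\lambda t})$ and the second by $\norm{f}_\infty\abs{1-e^{\lambda t}}e^{-\lambda t}$, so both tend to $0$ as $\lambda\downarrow 0$. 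Evaluating along the sequence $\lambda_n\downarrow 0$ therefore yields
\[
\lambda_n\langle R(\lambda_n,A_\mu)\iota(f),\nu\rangle - \lambda_n\langle R(\lambda_n,A_\mu)\iota(K_\mu(t)f),\nu\rangle \longrightarrow 0.
\]

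Finally I would identify the two summands. By hypothesis $\lambda_n R(\lambda_n,A_\mu)'\nu\to\nu^\star$ in $\sigma(\mathscr{M}(\overline{\Omega}),C_b(\overline{\Omega}))$, so $\lambda_n\langle R(\lambda_n,A_\mu)\iota(f),\nu\rangle = \langle f,\lambda_n R(\lambda_n,A_\mu)'\nu\rangle\to\langle f,\nu^\star\rangle$; and since $K_\mu(t)f\in C_b(\overline{\Omega})$ by the strong Feller property, the same reasoning with $f$ replaced by $K_\mu(t)f$ gives $\lambda_n\langle R(\lambda_n,A_\mu)\iota(K_\mu(t)f),\nu\rangle\to\langle K_\mu(t)f,\nu^\star\rangle$. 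Combining the three displays, $\langle f,\nu^\star\rangle = \langle K_\mu(t)f,\nu^\star\rangle = \langle f,T_\mu'(t)\nu^\star\rangle$ for all $f\in C_b(\overline{\Omega})$ and $t>0$, which is the assertion; testing with $f=\one$ moreover gives $\nu^\star(\overline{\Omega})=1$, so $\nu^\star$ is again a probability measure. The one point that genuinely needs the hypotheses of this section is the use of the strong Feller property to ensure $K_\mu(t)f$ is continuous: without it the weak-$*$ limit $\nu^\star$ could not be tested against $K_\mu(t)f$, and the rest of the proof is a routine Laplace-transform estimate.
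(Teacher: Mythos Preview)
Your argument is correct, but it follows a genuinely different route from the paper's. The paper works on the generator side: since $R(\lambda,A_\mu)$ is strong Feller, it views $R(\lambda,A_\mu)'$ as a $\sigma(\mathscr{M}(\overline{\Omega}),C_b(\overline{\Omega}))$-continuous pseudoresolvent, hence as the resolvent of a multivalued, $\sigma(\mathscr{M},C_b)$-closed operator $A_\mu'$. Writing $\nu_n\coloneqq\lambda_n R(\lambda_n,A_\mu)'\nu$, one has $A_\mu'\nu_n=\lambda_n\nu_n-\lambda_n\nu\rightharpoonup 0$, and the closedness of $A_\mu'$ yields $A_\mu'\nu^\star=0$; invariance then follows from the multivalued analogue of Proposition~\ref{prop:AWF} cited from \cite{kunze2011}. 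Your proof instead stays on the semigroup side, using the Laplace-transform identity of Remark~\ref{r.sgcbm} and an explicit Abelian estimate to show $\langle K_\mu(t)f,\nu^\star\rangle=\langle f,\nu^\star\rangle$ directly.

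The tradeoff is this: your approach is more self-contained (no multivalued-operator machinery, no external reference), but it genuinely relies on $K_\mu(t)f\in C_b(\overline{\Omega})$ to pass to the weak limit, and hence on the strong Feller property of the \emph{semigroup}, i.e.\ on the standing assumption $\one\in D(A_\mu)$ of Section~\ref{s.asympt}. The paper's argument uses only the strong Feller property of the \emph{resolvent}, which was established unconditionally in Section~\ref{s.ell}, so the lemma as proved there does not actually depend on that standing assumption. Under the hypotheses of the section both proofs are valid; yours is more elementary, the paper's slightly more general.
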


\begin{proof}
As $R(\lambda, A_\mu)$ is a strong Feller operator, we may view $R(\lambda, A_\mu)'$ as an operator which is continuous with respect to the $\sigma (\mathscr{M}(\overline{\Omega}), C_b(\overline{\Omega}))$-topology. 
We should note that $R(\lambda, A_\mu)'$
is not necessarily injective, whence it may not be the resolvent of an operator. We may, however, view it as the resolvent of
a multivalued and $\sigma(\mathscr{M}(\overline{\Omega}), C_b(\overline{\Omega}))$-closed operator which we may view as multivalued generator of $T_\mu'$. In slight abuse of notation, we denote this operator by $A_\mu'$.

Let $\nu_n \coloneqq \lambda_nR(\lambda_n, A_\mu)'\nu$. Then $\nu_n \rightharpoonup \nu^\star$. Here, and in what follows,
$\rightharpoonup$ denotes convergence with respect to the $\sigma (\mathscr{M}(\overline{\Omega}), C_b(\overline{\Omega}))$-topology. From the identity $(\lambda_n - A_\mu')R(\lambda_n, A_\mu)'=I$, we obtain
\[
A_\mu' \nu_n = \lambda_n \nu_n - \lambda_n \nu \rightharpoonup 0.
\]
By the closedness of $A_\mu'$, we find $\nu^\star \in D(A_\mu')$ and $A_\mu'\nu^\star=0$. Using \cite[Proposition 5.7]{kunze2011} it follows that $\nu^\star$ is invariant.
\end{proof}

We can now prove a Lyapunov criterion that ensures the existence of an invariant probability measure.

\begin{thm}\label{t.lyapunov}
Assume that $\ker A_\mu = \lh\{\one\}$. Suppose furthermore that 
 there is a function $V \in C(\overline{\Omega})\cap W(\Omega)$ such that
\begin{enumerate}
[(i)]
\item $V \geq 0$ and $V(x) \to \infty$ as $|x|\to \infty$;
\item $\mathscr{A}V$ coincides almost everywhere on $\Omega$ with a continuous function that is bounded on bounded subsets and $\mathscr{A}V(x) \to -\infty$ as $|x|\to \infty$;
\item for every $z\in \partial \Omega$ the function $V$ is integrable with respect to $\mu(z)$ and
for the function $v_0(z)\coloneqq \int_\Omega V(x)\,\mu (z, dx)$ we have $v_0 \leq V$ on $\partial \Omega$.
\end{enumerate}
Then $T_\mu$ has a unique invariant probability measure.
\end{thm}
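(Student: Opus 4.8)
Since $\ker A_\mu = \lh\{\one\}$ forces $\one\in D(A_\mu)$, Lemma \ref{l.deqdmax} gives $D(A_\mu)=\Dmax$, Theorem \ref{t.sfproperty} shows that $T_\mu$ is Markovian with the strong Feller property, and Theorem \ref{t.asymptotic} already yields uniqueness of an invariant probability measure. Hence the only thing left to prove is \emph{existence}, and the plan is to obtain an invariant measure as a weak$^*$ accumulation point of Abel means of $T_\mu'$ and then to invoke Lemma \ref{l.abel}. Concretely, I would fix a base point $x_0\in\Omega$ and set $\nu_\lambda \coloneqq \lambda R(\lambda,A_\mu)'\delta_{x_0}$ for $\lambda\in(0,1]$; this is a probability measure on $\overline\Omega$ because $\one\in D(A_\mu)$ forces $\lambda R(\lambda,A_\mu)\one=\one$ (read off from the kernel semigroup $K_\mu$ as in the proof of Theorem \ref{t.sfproperty}). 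Once $\{\nu_\lambda:\lambda\in(0,1]\}$ is shown to be tight, Prokhorov's theorem produces $\lambda_j\downarrow 0$ with $\nu_{\lambda_j}\weak\nu^\star$ for a probability measure $\nu^\star$, and Lemma \ref{l.abel} (applied with the probability measure $\delta_{x_0}$ and the sequence $(\lambda_j)$) shows that $\nu^\star$ is invariant for $T_\mu'$. Uniqueness then identifies it as the invariant probability measure.

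The heart of the matter is the Lyapunov estimate yielding tightness. The obstacle is that $V$ is neither bounded nor a member of $\Dmax$: it satisfies only the boundary \emph{inequality} (iii), not \eqref{eq.bc}, so it cannot be inserted into $A_\mu$ or $R(\lambda,A_\mu)$ directly. The remedy is to work with the approximating operators $A_n$ from Section \ref{s.ell}. The crucial observation is that, because $0\le\rho_n\le1$ and $V\ge0$, condition (iii) upgrades to the statement that $V|_{\overline{\Omega_n}}$ is a \emph{supersolution of the truncated nonlocal boundary condition}: for $z\in\partial\Omega_n\cap\partial\Omega$ one has $\applied{V}{\mu_n(z)}=\rho_n(z)\applied{V\rho_n}{\mu(z)}\le\applied{V}{\mu(z)}=v_0(z)\le V(z)$, while for $z\in\partial\Omega_n\setminus\partial\Omega$ one has $\mu_n(z)=0$. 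Applying Lemma \ref{lem:maxprinz} to $u\coloneqq R(\lambda,A_n)\big((\lambda-\cA)V|_{\Omega_n}\big)-V$, which satisfies $(\lambda-\cA)u=0$ on $\Omega_n$ and $u\le\applied{u}{\mu_n}$ on $\partial\Omega_n$, gives $R(\lambda,A_n)\big((\lambda-\cA)V|_{\Omega_n}\big)\le V$ on $\overline{\Omega_n}$. Evaluating at $x_0$ and writing $\nu_\lambda^{(n)}\coloneqq\lambda R(\lambda,A_n)'\delta_{x_0}$ (a sub-probability measure carried by $\overline{\Omega_n}$), one obtains, for all $n$ and all $\lambda\in(0,1]$,
\[
\int_{\overline{\Omega_n}}(-\cA V)\,d\nu_\lambda^{(n)} \;\le\; \lambda\Big(V(x_0)-\int_{\overline{\Omega_n}}V\,d\nu_\lambda^{(n)}\Big)\;\le\; V(x_0),
\]
since $V\ge0$ and $\lambda\le1$.

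Now the growth conditions in (ii) turn this one-sided bound into tightness. Put $K\coloneqq(\sup_\Omega\cA V)^+<\infty$ (finite since $\cA V$ is bounded on bounded sets and tends to $-\infty$), and for $L>0$ choose $m_L\in\N$ with $\cA V\le-L$ on $\Omega\setminus\Omega_{m_L}$. Splitting the integral over $\overline{\Omega_{m_L}}$ and its complement yields $V(x_0)\ge -K+L\,\nu_\lambda^{(n)}(\overline\Omega\setminus\overline{\Omega_{m_L}})$, i.e.\ $\nu_\lambda^{(n)}(\overline\Omega\setminus\overline{\Omega_{m_L}})\le(V(x_0)+K)/L$ for all $n$ and $\lambda\in(0,1]$. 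To pass to $\nu_\lambda$ I would invoke Theorem \ref{t.elliptic} together with Lemma \ref{l.sup}: the resolvents $R(\lambda,A_n)$, extended by zero, increase to $R(\lambda,A_\mu)$, hence the associated sub-Markovian kernels increase setwise and so $\nu_\lambda^{(n)}(B)\uparrow\nu_\lambda(B)$ for every Borel set $B$. Therefore $\nu_\lambda(\overline\Omega\setminus\overline{\Omega_{m_L}})\le(V(x_0)+K)/L$ as well, and since each $\overline{\Omega_{m_L}}$ is compact this is exactly the tightness of $\{\nu_\lambda:\lambda\in(0,1]\}$. The proof then concludes as outlined in the first paragraph.

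The step I expect to be most delicate is this Lyapunov estimate on the bounded domains: recognizing that (iii) together with $0\le\rho_n\le1$ and $V\ge0$ makes $V$ a genuine supersolution of the cut-off boundary condition (so that Lemma \ref{lem:maxprinz} is applicable at all), and then carrying out the limit $n\to\infty$ through the setwise monotone convergence of the measures $\nu_\lambda^{(n)}$ rather than through any continuity in $n$ — the estimate is not monotone in $n$ in an obvious way, so one must argue at the level of the increasing kernels from Lemma \ref{l.sup}. Everything downstream (the role of $\cA V\to-\infty$ via Chebyshev's inequality, Prokhorov, and the appeal to Lemma \ref{l.abel}) is then routine.
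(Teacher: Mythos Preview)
Your proposal is correct and follows essentially the same route as the paper: reduce to tightness of the Abel means $\{\lambda R(\lambda,A_\mu)'\delta_{x_0}:0<\lambda\le1\}$, establish the key supersolution estimate $R(\lambda,A_n)\big((\lambda-\cA)V\big)\le V$ on $\overline{\Omega_n}$ via Lemma~\ref{lem:maxprinz} (using exactly the boundary inequality you isolate from (iii) together with $0\le\rho_n\le1$ and $V\ge0$), and then convert the Lyapunov bound into a tail estimate by a Chebyshev-type argument exploiting $\cA V\to-\infty$.

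The only cosmetic difference is the order of the two limiting steps. The paper first shifts to the nonnegative integrand $c-\cA V$ (choosing $c>0$ with $c-\cA V\ge0$) and passes to the limit in $n$ by monotone convergence to obtain $\int_{\overline\Omega}(c-\cA V)\,dR(\lambda,A_\mu)'\delta_{x_0}\le V(x_0)+c/\lambda$, and only then applies Chebyshev. You instead apply Chebyshev already at level $n$ and then let $n\to\infty$ via the setwise increase of the kernels. Both orderings rely on the same underlying fact---monotone convergence of $\lambda R(\lambda,A_n)'\delta_{x_0}$ to $\lambda R(\lambda,A_\mu)'\delta_{x_0}$, which you justify more explicitly through Lemma~\ref{l.sup} (applied to the sub-Markovian kernels of $\lambda R(\lambda,A_n)$) than the paper does. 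Your commentary that this passage to the limit is the delicate point is well taken; the paper's phrase ``by monotone convergence'' is hiding precisely the argument you spell out.
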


\begin{proof}
In view of Lemma \ref{l.abel}, it suffices to prove that for some $x_0 \in \Omega$ the set
\[
\{ \lambda R(\lambda, A_\mu)'\delta_{x_0} : 0<\lambda \leq 1\}
\]
is tight.\smallskip

As a first step, let us prove that the function $-\mathscr{A}V$ is integrable with respect to the measure $R(\lambda, A_\mu)'\delta_{x_0}$ whenever $\lambda \in (0,1]$. To that end, let us fix $n_0$ so large that $x_0 \in \Omega_{n_0}$. For $n\geq n_0$, let us put $\tilde f_{n} \coloneqq R(\lambda, A_n)(\lambda- \mathscr{A})V$. Then $\tilde f_n \in D(A_n)$. In particular, $\tilde f_n$ satisfies $\tilde f_n(z) = \langle \tilde f_n, \mu_n(z)\rangle$ for all $z\in \partial \Omega_n$. Now put
$f_n \coloneqq \tilde f_n - \one_{\overline{\Omega_n}}V$. Then $(\lambda - \cA)f_n = 0$ on $\Omega_n$.
Since 
\[
\langle \one_{\overline{\Omega_n}}V, \mu_n (z)\rangle \leq \langle V, \mu (z)\rangle = v_0(z) \leq V(z)
\]
for all $z\in \partial\Omega_n$ we infer that $f_n(z) \leq \langle f_n, \mu_n (z)\rangle$ for all $z\in \partial \Omega_n$. It follows from Lemma \ref{lem:maxprinz} that $f_n \leq 0$ on $\Omega_n$ and thus
\[
-R(\lambda, A_n)\mathscr{A}V \leq R(\lambda, A_n)(\lambda - \mathscr{A})V \leq V
\]
on $\Omega_n$, as $\lambda R(\lambda, A_n)V \geq 0$. 

Now pick $c>0$ such that $c-\cA V \geq 0$, which is possible in view of assumption (ii). Note, that $R(\lambda, A_n)c \leq c\lambda^{-1}$. By monotone convergence, we find 
\[
\int_{\overline{\Omega}}\big( c-\mathscr{A}V\big)\, dR(\lambda, A_\mu)'\delta_{x_0}
= \sup_{n\in \N} \big(R(\lambda, A_n)c- R(\lambda, A_n)\mathscr{A}V(x_0)\big) \leq \frac{c}{\lambda}+V(x_0) .
\]

We can now prove the claimed tightness. To that end, let $\eps>0$ be given. As $\mathscr{A}V(x)  \to -\infty$ as $|x|\to \infty$ we find a radius $r>0$ such that $\mathscr{A} V(x) \leq c -\eps^{-1}$ for all $|x|>r$. Consequently,
$\one_{B_r(0)^c} \leq \eps(c-\mathscr{A}V)$ and hence
\begin{align*}
& (\lambda R(\lambda, A_\mu)'\delta_{x_0})(B_r(0)^c)\\
\leq & \eps \lambda \int_{\overline{\Omega}} (c-\mathscr{A} V) d(R(\lambda, A_\mu)'\delta_{x_0})
\leq \lambda \eps (V(x_0) + \frac{c}{\lambda}) \leq \eps(V(x_0)+c) 
\end{align*}
for all $0<\lambda \leq 1$.
\end{proof}

\begin{rem}\label{rem.lyap}
If $V$ satisfies the assumptions of Theorem \ref{t.lyapunov}, then it also satisfies Hypothesis \ref{hyp:Lyapunov}.
\end{rem}

\section{Examples}\label{sect.example}

In this section, we show how the assumptions of Theorem \ref{t.lyapunov} can be verified in concrete situations. We assume that
\[
\Omega = \mathds{R}^d \setminus \overline{B}(0,1) = \{ x\in \mathds{R}^d : \|x\|> 1\}.
\]
We note that $\partial\Omega$ is compact so that whenever $\mu: \partial\Omega \to \mathscr{M}(\Omega)$ satisfies 
Hypothesis \ref{h.mu} the set
$\{\mu(z): z \in \partial\Omega\}$ is tight. In particular, the concentration condition from Theorem \ref{thm:unique} is automatically fulfilled.

We assume that the coefficients $a_{ij}$ and $b_j$ belong to $C(\overline{\Omega})$ for $i,j=1, \ldots, d$ and satisfy
\begin{equation}\label{eq.behavior}
\lim_{|x|\to \infty} \sum_{j=1}^d \big(a_{jj}(x) + b_j(x) x\big) = - \infty.
\end{equation}
In this situation, the function $V(x) = |x|^2$ satisfies $V(x)\geq 0$, $\lim_{|x|\to\infty} V(x) = \infty$ and
$\lim_{|x|\to \infty} \mathscr{A}V(x) = - \infty$, cf.\ \cite[Corollary 6.4]{mpw02}.

\begin{example}
Condition \eqref{eq.behavior} is for example satisfied in the following situations:
\begin{enumerate}
[(a)]
\item If $a_{ij}(x) = \delta_{ij}$ and $b_j(x) = - x_j$, i.e.\ when 
$\mathscr{A}$ is the \emph{Ornstein--Uhlenbeck operator}
\[
\mathscr{A}u(x) = \Delta u(x) -\langle x, \nabla u(x)\rangle.
\]
\item For operators of the form
\[
\mathscr{A}u(x) = \frac{1}{|x|^\alpha} \Delta u(x) - \langle x, \nabla u(x)\rangle,
\]
where $\alpha>0$ (recall that $|x|>1$ for $x\in \Omega$).
\item For operators of the form
\[
\mathscr{A}u(x) = |x|^\alpha\Delta u(x) - |x|^{\beta-1}\langle x, \nabla u(x)\rangle,
\]
where  $\alpha>0$ and $\beta > \alpha- 1$.
\end{enumerate}
\end{example}

\begin{cor}\label{c1}
Let $\Omega$ be as above and assume that the continuous coefficients $a_{ij}$ and $b_j$ satisfy besides Hypothesis \ref{h.coeff} also Condition \eqref{eq.behavior}. Then $D(A_\mu) = D_\mathrm{max}$ and the semigroup $T_\mu$ is
Markovian and enjoys the strong Feller property.
\end{cor}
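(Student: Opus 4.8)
The plan is to exhibit a Lyapunov function in the sense of Hypothesis~\ref{hyp:Lyapunov}, to check the mild concentration condition on $\mu$, and then to invoke Corollary~\ref{cor:uniquesolution} together with Theorem~\ref{t.sfproperty}.

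First I would take $V(x)\coloneqq |x|^2$. Being a polynomial, $V$ lies in $C(\overline{\Omega})\cap W(\Omega)$, and clearly $V\geq 0$ with $V(x)\to\infty$ as $|x|\to\infty$, so Hypothesis~\ref{hyp:Lyapunov}(a) holds. A direct computation gives
\[
\mathscr{A}V(x)=2\sum_{j=1}^d a_{jj}(x)+2\sum_{j=1}^d b_j(x)x_j,
\]
which is a genuinely continuous function on $\overline{\Omega}$, because in the present section the coefficients $a_{ij}$ and $b_j$ are assumed continuous on $\overline{\Omega}$; in particular $\mathscr{A}V$ is bounded on bounded subsets of $\Omega$, which is Hypothesis~\ref{hyp:Lyapunov}(b). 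Condition~\eqref{eq.behavior} says precisely that $\mathscr{A}V(x)\to-\infty$ as $|x|\to\infty$ (cf.\ \cite[Corollary~6.4]{mpw02}), hence for any fixed $\lambda>0$ the function $(\lambda-\mathscr{A})V(x)=\lambda|x|^2-\mathscr{A}V(x)$ tends to $+\infty$ as $|x|\to\infty$, so there is a radius $r>0$ with $(\lambda-\mathscr{A})V\geq 0$ on $\Omega\setminus B_r(0)$; this is Hypothesis~\ref{hyp:Lyapunov}(c). Thus $V$ is a Lyapunov function in the required sense.

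Moreover, since $\partial\Omega=\partial B(0,1)$ is compact, the concentration hypothesis of Theorem~\ref{thm:unique} (namely, that there are $N\in\N$ and $\eps>0$ with $\mu(z,\Omega_N)\geq\eps$ for all $z\in\partial\Omega$) holds automatically, as observed at the beginning of this section: the set $\{\mu(z):z\in\partial\Omega\}$, being the image of the compact set $\partial\Omega$ under the $\sigma(\mathscr{M}(\Omega),C_b(\Omega))$-continuous map $z\mapsto\mu(z)$, is tight. Hence Corollary~\ref{cor:uniquesolution} applies and yields $D(A_\mu)=\Dmax$.

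Finally, as each $\mu(z)$ is a probability measure on $\Omega$, the constant function $\one$ satisfies $\one(z)=1=\applied{\one}{\mu(z)}$ for every $z\in\partial\Omega$, while $\mathscr{A}\one=0\in L^\infty(\Omega)$; thus $\one\in\Dmax=D(A_\mu)$, and Theorem~\ref{t.sfproperty} shows that $T_\mu$ is Markovian and enjoys the strong Feller property. No serious obstacle arises; the two points deserving care are that the continuity of the drift coefficients assumed in this section (rather than the mere local boundedness of Hypothesis~\ref{h.coeff}) is what makes $\mathscr{A}V$ continuous, and that the concentration condition is free precisely because $\partial\Omega$ is compact.
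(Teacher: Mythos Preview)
Your proof is correct and follows essentially the same route as the paper: take $V(x)=|x|^2$, verify Hypothesis~\ref{hyp:Lyapunov} via Condition~\eqref{eq.behavior}, use compactness of $\partial\Omega$ for the concentration condition, apply Corollary~\ref{cor:uniquesolution} to get $D(A_\mu)=D_\mathrm{max}$, and conclude with Theorem~\ref{t.sfproperty}. Your version is simply more explicit in checking the individual parts of Hypothesis~\ref{hyp:Lyapunov} and in noting that $\one\in D_\mathrm{max}$ (which, by Lemma~\ref{l.deqdmax}, is in fact equivalent to $D(A_\mu)=D_\mathrm{max}$, so the final step is immediate).
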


\begin{proof}
It follows from Condition \eqref{eq.behavior} that the function $V(x) = |x|^2$ satisfies Hypothesis 6.3. Since $\partial \Omega$ is compact and thus $\{\mu(z): z\in \partial\Omega\}$ is tight, the other condition of Theorem \ref{thm:unique} is satisfied and $D(A_\mu) = D_\mathrm{max}$ follows from Corollary \ref{cor:uniquesolution}. The assertions concerning the semigroup $T_\mu$ now follow from Theorem \ref{t.sfproperty}.
\end{proof}

Let us now turn to the existence of an invariant measure. If Condition \eqref{eq.behavior} is satisfied, then
$V(x) = |x|^2$ satisfies condition (i) and (ii) in Theorem \ref{t.lyapunov}. Condition (iii), however, is not satisfied in general
by this function. Indeed, $V$ need not be integrable with respect to the measures $\mu (z)$, for example if $d=1$ and
$\mu(z)$ has a density of the form $c|x|^{-2}$ with respect to Lebesgue measure. Even if $V(x)$ is integrable with respect to all measures $\mu(z)$, we cannot expect that for $z\in \partial \Omega$ we have
$\int V(x)\, \mu (z, dx) \leq 1=V(z)$. However, sometimes we may modify the function $V$ such that this is the case.

\begin{cor}
Let $\Omega$ be as above and assume that the continuous coefficients $a_{ij}$ and $b_j$ satisfy besides Hypothesis \ref{h.coeff} also Condition \eqref{eq.behavior}. Moreover, assume that 
\[
\sup_{|z|=1} \int_\Omega |x|^2\, \mu (z, dx) < \infty.
\]
Then the semigroup $T_\mu$ has a unique invariant measure.
\end{cor}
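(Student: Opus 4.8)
The statement to prove is the final Corollary: under Hypothesis~\ref{h.coeff}, Condition~\eqref{eq.behavior}, and the additional moment bound $\sup_{|z|=1}\int_\Omega |x|^2\,\mu(z,dx)<\infty$, the semigroup $T_\mu$ has a unique invariant probability measure. The natural route is to verify the hypotheses of Theorem~\ref{t.lyapunov}, which requires two things: first, that $\ker A_\mu=\lh\{\one\}$, and second, the existence of a Lyapunov function $V\in C(\overline\Omega)\cap W(\Omega)$ satisfying conditions (i)--(iii) of that theorem. Since $\partial\Omega=\{|z|=1\}$ is compact, the concentration condition of Theorem~\ref{thm:unique} is automatic (the set $\{\mu(z):z\in\partial\Omega\}$ is tight because $z\mapsto\mu(z)$ is weakly continuous on a compact set), so $\ker A_\mu=\lh\{\one\}$ follows from Corollary~\ref{c.kernel} once we exhibit \emph{any} Lyapunov function in the sense of Hypothesis~\ref{hyp:Lyapunov}; and by Remark~\ref{rem.lyap} the $V$ we construct for Theorem~\ref{t.lyapunov} will automatically do this, so really everything reduces to producing the right $V$.

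\textbf{Constructing $V$.} The candidate $V(x)=|x|^2$ already satisfies (i) and (ii) by \cite[Corollary~6.4]{mpw02} (this is exactly Condition~\eqref{eq.behavior}), but it may fail (iii): there is no reason for $\int_\Omega|x|^2\,\mu(z,dx)\le 1=V(z)$ on $\partial\Omega$. The fix is to rescale: set $V(x)\coloneqq c|x|^2$ for a suitable constant $c>0$. Properties (i) and (ii) are stable under multiplication by a positive constant (and $\cA V(x)=c\cA(|x|^2)(x)\to-\infty$), so they persist. For (iii), let $M\coloneqq\sup_{|z|=1}\int_\Omega|x|^2\,\mu(z,dx)$, which is finite by assumption and satisfies $M\ge 1$ (since $\mu(z)$ is a probability measure supported in $\Omega=\{|x|>1\}$, so $\int|x|^2\,d\mu(z)\ge 1$). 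Then for every $z\in\partial\Omega$,
\[
v_0(z)=\int_\Omega c|x|^2\,\mu(z,dx)=c\int_\Omega|x|^2\,\mu(z,dx)\le cM.
\]
We want $v_0\le V$ on $\partial\Omega$, i.e.\ $cM\le c|z|^2=c$ for $|z|=1$; this fails unless $M\le 1$. So a pure rescaling is not enough — a constant multiple cannot beat the ratio $M$. The correct modification is to \emph{add} a constant as well: but adding a constant doesn't help either, since $v_0$ picks up the same constant as $V$. One really needs a function that grows faster than $|x|^2$ near $\partial\Omega$ relative to far away, or — more simply — to use a function of the form $V(x)=|x|^2+K$ is useless, so instead take $V$ of the form $\phi(|x|)$ with $\phi$ convex, $\phi(1)$ large compared to the relevant averages. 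The cleanest choice: pick $V(x)=|x|^2-1$, so that $V\ge 0$ on $\overline\Omega$, $V=0$ on $\partial\Omega$, and $\cA V=\cA(|x|^2)$ still tends to $-\infty$; then $v_0(z)=\int_\Omega(|x|^2-1)\,\mu(z,dx)=\int|x|^2\,d\mu(z)-1\ge 0$, which is \emph{not} $\le 0=V(z)$. This still fails. The honest resolution is that (iii) as stated compares $v_0$ to $V$ \emph{pointwise on $\partial\Omega$}, and to satisfy it one must choose $V$ so that its boundary values dominate the $\mu$-averages; a function like $V(x)=(|x|^2-1)+C\cdot w(x)$ where $w\ge 0$, $w(z)$ is large on $\partial\Omega$ and $\cA w$ stays controlled — e.g.\ $w$ harmonic and positive with a large boundary value — will do, but the simplest device is to take $V(x)=|x|^{2}$ and replace the requirement by noting that one may instead use $V_\delta(x)=|x|^2 + \delta^{-1}$ only if one simultaneously shrinks the far-field; since that is circular, the actual argument in the paper presumably takes $V(x)=|x|^{2}$ together with an additive term handling the boundary. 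I would therefore search for $V$ among functions $V(x)=|x|^2+g(x)$ with $g\in C(\overline\Omega)\cap W(\Omega)$ bounded, $g\ge 0$, chosen so that $g(z)\ge M-1$ for $|z|=1$ while $\cA g$ remains bounded on bounded sets (e.g.\ $g$ a smooth bump near $\partial\Omega$ equal to the constant $M-1$ on a neighborhood of $\partial\Omega$ and tapering to $0$ — then $g$ is compactly supported, $\cA g$ is bounded with compact support, so it does not interfere with $\cA V\to-\infty$, and on $\partial\Omega$ we get $V(z)=1+(M-1)=M\ge v_0(z)$). This $V$ satisfies (i), (ii), (iii).

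\textbf{Finishing.} With such a $V$ in hand: by Remark~\ref{rem.lyap}, $V$ also satisfies Hypothesis~\ref{hyp:Lyapunov}; combined with the (automatic) concentration condition and the connectedness of $\Omega=\R^d\setminus\overline B(0,1)$ (for $d\ge 2$; for $d=1$, $\Omega$ has two components and one argues componentwise or notes $d\ge 2$ is the case of interest), Corollary~\ref{c.kernel} gives $\ker A_\mu=\lh\{\one\}$. Then Theorem~\ref{t.lyapunov} applies verbatim and yields a unique invariant probability measure for $T_\mu$. The only real work is the construction of the boundary-correcting bump $g$ and checking it lies in $W(\Omega)$ with $\cA g$ bounded on bounded sets — routine, since $g$ can be taken smooth and compactly supported in $\overline\Omega$. \textbf{The main obstacle} is precisely getting condition~(iii) right: the naive $V=|x|^2$ does not satisfy $v_0\le V$ on $\partial\Omega$ in general, and one must add a bounded, boundary-supported correction term whose presence does not spoil conditions (i) and (ii) — in particular does not spoil $\cA V(x)\to-\infty$, which is why the correction must have \emph{compact support} (or at least vanish at infinity together with its first two derivatives).
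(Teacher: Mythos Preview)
Your overall strategy is right --- reduce to Theorem~\ref{t.lyapunov} by constructing a Lyapunov function, and get $\ker A_\mu=\lh\{\one\}$ from Corollary~\ref{c.kernel} via Remark~\ref{rem.lyap} --- but the construction of $V$ has a genuine gap. You propose $V(x)=|x|^2+g(x)$ with $g$ a smooth nonnegative bump equal to $M-1$ near $\partial\Omega$ and compactly supported, and then claim $V(z)=M\ge v_0(z)$ on $\partial\Omega$. But $v_0(z)=\int_\Omega V\,d\mu(z)=\int_\Omega|x|^2\,d\mu(z)+\int_\Omega g\,d\mu(z)$, and the \emph{second} integral need not vanish: if $\mu(z)$ puts mass near the boundary (where $g=M-1$), then $v_0(z)$ can be as large as $M+(M-1)=2M-1>M$. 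You have accounted for the boundary correction in $V(z)$ but forgotten that the same correction reappears inside $v_0(z)$.

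The missing idea is that you must first control the mass the measures $\mu(z)$ place near $\partial\Omega$, \emph{uniformly in $z$}. Since $\mu(z)$ lives on the open set $\Omega$, for each fixed $z$ one has $\mu(z,\{1<|x|<1+\eps\})\to 0$ as $\eps\to 0$; the uniformity over $z\in\partial\Omega$ comes from compactness of $\partial\Omega$ and Dini's theorem applied to the continuous functions $z\mapsto\langle f_n,\mu(z)\rangle$ for a suitable decreasing sequence $f_n\downarrow 0$. Once you know $\mu(z,S_\eps)\le\delta$ uniformly for some small $\delta$, you can choose the height of the bump (or, as the paper does, compose with a $C^2$ function $\varphi$ so that $\tilde V=\varphi(|x|^2)$ equals $M+1$ on $\partial\Omega$, stays bounded by $M+1$ on $S_\eps$, and agrees with $|x|^2$ outside) so that the contribution of $S_\eps$ to $v_0(z)$ is dominated by the gain $\tilde V(z)-M$. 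This quantitative balancing is the heart of the argument, and it cannot be skipped.
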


\begin{proof}
We note that as $\one \in D(A_\mu)$ by Corollary \ref{c1}, the semigroup $T_\mu$ can at most have one invariant probability measure. To prove existence of an invariant probability measure, we show that we can modify the function $V(x) = |x|^2$ in such a way, that the assumptions of Theorem \ref{t.lyapunov} are satisfied.\smallskip

We set $M\coloneqq \sup_{|z|=1} \int_\Omega |x|^2\, \mu (z, dx)$. We claim that we can find $\eps\in (0,1)$ 
such for the set
$S_\eps \coloneqq B_{1+\eps}(0)\setminus B_1(0)$ we have $\mu (z, S_\eps) \leq (1+2M)^{-1}$ for all $z\in \partial \Omega$. To see this, pick a continuous function $f_n: \overline{\Omega} \to [0,1]$ such that $f_n(x) = 1$ for $1 \leq |x| \leq 1+n^{-1}$ and $f_n(x) = 0$ for $|x|\geq 1+2n^{-1}$. Then $f_n\downarrow 0$ pointwise on $\Omega$. By dominated convergence, we have that $\langle f_n, \mu (z)\rangle \downarrow 0$ for every $z\in \partial \Omega$. Since
the function $z\mapsto \langle f_n, \mu(z)\rangle$ is continuous, it follows from Dini's theorem that this convergence is uniform on the compact set $\partial \Omega$. Consequently, we find an index $n$ such that $\langle f_n, \mu(z)\rangle \leq (1+M)^{-1}$ for \emph{all} $z\in \partial \Omega$ and we may put $\eps = 2n^{-1}$.\smallskip

We now pick a function $\varphi \in C^2([1,\infty))$ such that 
\[
\varphi (t) \begin{cases}
= M+1 & \mbox{for } t= 1\\
\in [0,M+1] & \mbox{for } t\in (0, \eps)\\
\in [0, t] & \mbox{for }  t \in [\eps, 1]\\
= t & \mbox{for } t> 1
\end{cases}
\]
and set $\tilde V (x) = \varphi (|x|^2)$. Then $\tilde V$ is a $C^2$-function such that for $|x|>1$ we have 
$\tilde V(x) = V(x)$ and $\mathscr{A}\tilde V(x) = \mathscr{A}V(x)$. In particular, we have $\tilde V(x) \to \infty$ for
$|x|\to \infty$ and $\mathscr{A}\tilde V(x) \to -\infty$ for $|x|\to \infty$, so that conditions (i) and (ii) in Theorem 
\ref{t.lyapunov} are fulfilled. Moreover, we have
\begin{align*}
\int_\Omega \tilde V(x)\, \mu (z, dx) & \leq \int_{S_\eps} M+1\, \mu (z, dx) + \int_{\Omega\setminus S_\eps} |x|^2\, \mu(z, dx)\\
& \leq 
\frac{1+M}{1+2M} + M \leq M+1 = \tilde V(z).
\end{align*}
This proves that $\tilde V$ also satisfies condition (iii) in Theorem \ref{t.lyapunov} so that the existence of an invariant measure follows from that theorem.
\end{proof}

\section*{Acknowledgment}

The author is grateful to Moritz Gerlach and Jochen Gl\"uck for several fruitful discussions concerning this article and their own article \cite{gg}. These discussions in particular resulted in abstract formulation of Lemma \ref{l.increase} and Proposition \ref{prop:supremumsemigroup} on duals of KB spaces.

\end{document}